\numberwithin{equation}{section}
\begin{document}
 \title{Strong order of convergence of a semidiscrete scheme for the stochastic
Manakov equation}
%\date{January 8, 2013}
\date{\today}
 \author{Maxime Gazeau}
  \maketitle

\newtheorem{theorem}{Theorem}[section]
\newtheorem{theoremf}{Théorème}[section]
\newtheorem{proposition}{Proposition}[section]
\newtheorem{property}{Property}[section]
\newtheorem{defn}{Definition}[section]
\newtheorem{lemma}{Lemma}[section]
\newtheorem{cor}{Corollary}[section]
\newtheorem{rmq}{Remark}[section]
\newtheorem{rmque}{Remarque}[section]
\newtheorem{ex}{Exemple}[section]
\newtheorem{condition}{Condition}[section]
\newtheorem{hyp}{Assumption}[section]
\newtheoremstyle{th}{}{}{}{}{}{}{\newline}
{{\color{black}\thmname{\bfseries #1}\thmnumber{{\bfseries \space #2}}}\thmnote{ (\rm #3)}}
\theoremstyle{th}
\newtheorem{etape}{Etape}[subsubsection]
\newtheorem{partie}{Part}[subsubsection]

\newcommand{\espace}[1]{\mathds{#1}}
\newcommand{\reps}[1]{#1^{R}_{\epsilon}} 
\newcommand{\mesure}[1]{\mathds{P}\circ \left(#1\right)^{-1}} 
\newcommand{\loi}{\xRightarrow[\epsilon \to 0]{}}

\renewcommand{\Re}{\mathcal{R}\mbox{e}}
\renewcommand{\Im}{\mathcal{I}\mbox{m}}
\newcommand{\abs}[1]{\left|#1\right|}
\newcommand{\norm}[2]{\left\|#1\right\|_{#2}}
\newcommand{\normm}[2]{ \left\|\left| #1 \right|\right\|_{#2}}
  \newcommand{\Id}{\text{I\footnotesize{d}}}

\newcommand{\puissance}[2]{#1^{#2}} 

\newcommand{\Reps}[1]{#1^{R}_{\epsilon}} 

\newcommand{\eps}[1]{#1_{\epsilon}} 
\newcommand{\vareps}[1]{#1_{\varepsilon}}
\newcommand{\mol}[1]{#1_{\eta}} 
\newcommand{\compoeps}[2]{#1_{#2,\epsilon}} 
\newcommand{\compovareps}[2]{#1_{#2,\varepsilon}} 
\newcommand{\rcompoeps}[2]{#1_{#2,\epsilon}^R} 
\newcommand{\compomol}[2]{#1_{#2,\eta}}
\newcommand{\sigb}{\pmb{\sigma}}
 \newcommand{\sigbeps}{\boldsymbol{\sigma}_{\epsilon}}

\newcommand{\dual}[2]{\ensuremath{\left< #1,#2 \right>}\xspace}
\newcommand{\ps}[2]{\ensuremath{\left( #1,#2 \right)_{\espace{L}^2}}\xspace}
\newcommand{\psr}[2]{\ensuremath{ #1 \cdot #2 }\xspace}
\newcommand{\psrdeux}[3]{\ensuremath{ #1.\left( #2, #3\right) }\xspace}
\newcommand{\pds}[3]{\frac{\partial^{#1} #3}{\partial #2^{#1}}}
\newcommand{\nl}[2]{F_{#1}{\left( #2\right) }}

\renewcommand{\a}{\begin{eqnarray*}} 
\renewcommand{\b}{\end{eqnarray*}}
\newcommand{\be}{\begin{eqnarray}} 
\newcommand{\ee}{\end{eqnarray}}
\newcommand{\bearray}{\begin{eqnarray}\left\{ \begin{array}{ll}}
\newcommand{\eearray}{\end{array}\right.\end{eqnarray}}
\newcommand{\bearrays}{\begin{eqnarray*}\left\{ \begin{array}{ll}}
\newcommand{\eearrays}{\end{array}\right.\end{eqnarray*}}

\newcommand{\saut}[1]{\vspace{#1\baselineskip}}
\newcommand{\leftsub}[2]{{\vphantom{#2}}_{#1}{#2}}
 
\renewcommand{\thefootnote}{(\arabic{footnote})} 
\renewcommand{\appendixpagename}{Annexes} 

\newcommand{\appsection}[1]{\let\oldthesection\thesection
  \renewcommand{\thesection}{Appendix \oldthesection.}
  \section{#1}\let\thesection\oldthesection}

\newcommand{\vecX}[2]{X^{#1}_{#2}}
\newcommand{\vecY}[2]{Y^{#1}_{#2}}
\newcommand{\vecXexact}[2]{\widetilde{X}^{#1}_{#2}}
\newcommand{\vecfX}[2]{\widehat{X}^{#1}_{#2}}
\newcommand{\vect}[3]{#1^{#2}_{#3}}
\newcommand{\vecinterp}[3]{\widetilde{#1}^{#2}_{#3}}
\newcommand{\vectilde}[3]{\widetilde{#1}^{#2}_{#3}}
\newcommand{\vecphi}[2]{\pmb{\phi}^{#1}_{#2}}
\newcommand{\conjvecX}[2]{\overline{X}^{#1}_{#2}}
\newcommand{\conjvecY}[2]{\overline{Y}^{#1}_{#2}}
\newcommand{\X}[2]{X^{#1}_{#2}}
\newcommand{\Y}[2]{Y^{#1}_{#2}}
\newcommand{\conjvec}[3]{\overline{#1}^{#2}_{#3}}
\newcommand{\dt}{\Delta t}
\newcommand{\dx}{\Delta x}
\newcommand{\dz}{\Delta z}
\renewcommand{\arraystretch}{1.5}
 %\renewcommand{\baselinestretch}{10} 

 %\author{M. Gazeau\thanks{gazeau@cmap.polytechnique.fr.}}

\begin{abstract}
It is well accepted by physicists that the Manakov PMD equation 
is a good model to describe the evolution of nonlinear electric 
fields in optical fibers with randomly varying birefringence. 
In the regime of the diffusion approximation theory, an effective 
asymptotic dynamics has recently been obtained to describe this evolution.
This equation is called  the stochastic Manakov equation.
  In this article, we propose a semidiscrete version of a Crank Nicolson 
  scheme for this limit equation and we analyze the strong error. 
Allowing sufficient regularity of the initial data, we prove that the
numerical scheme has strong order $1/2$.

\saut{0.5}

\noindent
\textbf{Keywords} : Stochastic partial differential equations, 
Numerical schemes, Rate of convergence, System of coupled nonlinear 
Schrödinger equations, Polarization Mode Dispersion.

\saut{0.5}

\noindent
 \textbf{MSC2010 subject classifications} : 60H15, 35Q55, 60M15.

\end{abstract}

% \begin{keywords}
% Stochastic partial differential equations, Numerical 
% schemes, Rate of convergence, System of coupled nonlinear Schrödinger
% equations, 
% Polarization Mode Dispersion (PMD).
% \end{keywords}
% 
% \begin{AMS}
% 60H15, 35Q55, 60M15
% \end{AMS}
% \pagestyle{myheadings}
% \thispagestyle{plain}
% \markboth{TEX PRODUCTION}{USING SIAM'S \LaTeX\ MACROS}

\section{Introduction}
The development of Internet and of the Web, in the second half of the $20^e$ 
century, has allowed for a rapid progress of optical communication systems. 
Today, engineers and physicists are trying to rise the bandwidths capacity of 
these communication systems as the Internet traffic has increased the last few years.
However, 
some dispersive effects limit the rate of transmission of information. The
Polarization
Mode Dispersion (PMD), appearing when the two components of the electric field
do not 
travel with the same characteristics, is one of the limiting factors of high bit
rate 
transmissions. The Manakov PMD equation was derived from the Maxwell equations
to study 
light propagation over long distances in such optical fibers \cite{[3]}. 
Due to the various length scales present in this problem, a small parameter 
$\epsilon$ appears in the rescaled equation. Using separation of scales
techniques, 
the author proved in \cite{[gazeau], [gazeauPHD]} that the asymptotic dynamics
is 
described by a stochastic perturbation, in the stratonovich sense, of the
Manakov equation. 
In this article, we consider a semidiscrete version of a Crank Nicolson scheme
for the 
stochastic Manakov equation. Our aim is to analyze the order of the error for
this scheme
and we prove that the strong order is $1/2$. 

\saut{0.5}

Numerical simulations are used in practice to solve complicated stochastic
differential 
equations and to lighten some hidden behaviours such as large deviations. In
optics, 
numerical simulations of the stochastic Manakov equation may help to understand
the impact
of the Polarization Mode Dispersion (PMD) on the pulse spreading \cite{[gazeaupmd]}. 
% %To illustrate these points, let us give some examples borrowed from
% \cite{[Burrage]}. 
% % In the finite dimensional setting, a interesting example of the influence of
% the noise 
% % on the deterministic dynamic is given by the stochastic Van der Pol
% oscillator. In the absence of
% % noise, the solution has two equilibrium states and is confined into one of
% these two states. 
% % However if the noise is large enough, transitions may occur between one of
% these two steady states. 
% % This phenomenon is known as large deviation principle. Such patterns may also
% happens in infinite
% % dimensions, for example considering the Ginzburg-Landau equation perturbed by
% an additive space-time
% % white noise.
% % % Stochastic numerical methods are usually based on existing schemes for
% deterministic 
% % equation but particular attention has to be taken on the stochastic part.
Depending on the problem, one may not be interested in the same quantities. On
one hand, 
one may be interested in the computation of path samples (related to strong
solutions) to 
emphasize, for example, the relation between various parameters in the
dynamics. 
%Above all the numerical schemes for pathwise approximations must preserve
% properties of 
% % the continuous solution. It could be geometric constraints (the
% Landau-Lifschitz-Gilbert 
% % equation or the DGD equation \eqref{} where the process lives on a sphere),
% positivity 
% % (the Cox-Ingersoll-Ross model for the dynamics of the short term interest
% rate), 
% % preservation of energy, etc. 
On the other hand, if the quantity under interest depends only on the law of the
dynamics, 
one will focus on weak approximations. The pathwise error analysis of numerical
schemes for 
SDE has been intensively studied \cite{[Faure], [Kloeden], [Milsteinbook],
[TalayF]}, whereas 
the weak error analysis started later with the work of Milstein
\cite{[MilsteinI], [MilsteinII]} 
and Talay \cite{[Talay]}, who used the Kolmogorov equation associated to the SDE
to obtain a weak 
order of convergence. Usually, for Euler schemes, the strong order is $1/2$.
More sophisticated 
schemes exist to increase the pathwise order but their numerical implementation
requires to 
compute multiple iterated integrals, which may be difficult if the dynamics is
driven by a
multi-dimensional Brownian motion. 
%Two different approaches exist to compute the iterated integrals : 
% the Karhunen-loève expansion (kind of Fourier series) \cite{[Burrage]}  
% and approximation of Levy areas \cite{[Gaines]}. Considering weak 
% approximations often allow to increase to one the rate of convergence 
% for euler scheme and constitute the starting point of proving convergence of
% Monte Carlo simulations. 

The numerical analysis of SPDEs combines stochastic analysis together with PDEs 
numerical approximation. Most of the results are concerned with the analysis of 
pathwise convergence for solutions of semi-linear and quasi-linear parabolic 
equations (for a non exhaustive list, see 
\cite{[DavieGaines], [GyongyI], [GyongyMilletI], [GyongyNualart], [Hausenblas], 
[MilletMorien], [printems]}). There is some recent literature on dispersive
equations, 
both for stochastic nonlinear Schr\"odinger equations 
\cite{[bouardschema], [bouardorder], [MartyScheme]} and for a 
stochastic Korteweg-de-Vries equation \cite{[PrintemsDebusscheKdv], 
[PrintemsDebusscheCVKdv]}. Weak order for SPDEs has been considered 
later \cite{[DebusscheWeak], [PrintemsDebusscheWeak], [HausenblasWeak]}; 
the proof consists then in using the Kolmogorov equation which is now a 
PDE with an infinite number of variables.

In our case, the difficult and innovative point lies in the linear estimate. 
Indeed, the noise term contains a one order derivative and hence cannot be 
treated as a perturbation \cite{[gazeau], [gazeauPHD]}. Moreover an implicit
discretization of the noise has to be considered to build a conservative
scheme and the delicate point, in order to obtain the strong error, is to
deal with random matrices. Indeed, the linear system to be solved contains
random coefficients and the expression of the global error contains terms 
that are not martingales. Hence, the usual arguments consisting of applying 
the Burkholder-Davis-Gundy inequality to the stochastic integral cannot be 
applied straightforwardly. The probability order for the nonlinear scheme
is obtained using classical arguments \cite{[bouardorder], [printems]}. 
This notion is not usual in the context of numerical analysis of stochastic
equation. It is weaker than the strong order in time and is used here 
because of the nonlinear drift. 

 In this article, we consider the order of convergence of a semi-discrete
scheme. 
 For smooth initial data, it is probable that the error analysis of the fully 
 discrete scheme is not a problem and that the strong order in space is the same
 as in the deterministic case.
\subsection{Presentation of the numerical scheme}
The stochastic Manakov equation is given by
\begin{equation}\label{stochasticmanakov}
idX(t) +\left( \pds{2}{x}{X(t)} + F\left( X(t)\right)\right) dt  
+i\sqrt{\gamma}\sum_{k=1}^3 \sigma_k\pds{}{x}{X(t)}\circ dW_k(t)=0, 
\quad t \geqslant 0, x\in \espace{R}
\end{equation}
where the $\espace{C}^2$ vector of unknown $X= (X_1, X_2)$ is a random process 
on a probability space $(\Omega, \mathcal{F}, \espace{P})$, $\gamma$ 
is a small positive parameter given by the physics of the problem, 
$W=(W_1, W_2, W_3)$ is a $3$-dimensional Brownian motion and $\circ$ 
denotes the Stratonovich product.   The matrices $\sigma_1, \sigma_2, \sigma_3$ 
are the Pauli matrices 
\a
\sigma_1=\begin{pmatrix}
 0 & 1\\
 1 & 0\\
 \end{pmatrix},\;\;
 \sigma_2=\begin{pmatrix}
 0 & -i\\
 i & 0\\
 \end{pmatrix},\;\;
  \sigma_3=\begin{pmatrix}
 1 & 0\\
 0 & -1\\
 \end{pmatrix},
\b
 and the nonlinear term is given by $F\left( X(t)\right) = \abs{X}^2X(t)$. 
 The equivalent It\^o formulation is given by
 \begin{equation}\label{stochasticmanakovito}
 dX(t) = \left( C_{\gamma}\pds{2}{x}{X(t)} +i\nl{}{X}(t)\right)  dt  
 -\sqrt{\gamma}\sum_{k=1}^3 \sigma_k\pds{}{x}{X(t)} dW_k(t).
\end{equation}
where $C_{\gamma}=i+\frac{3\gamma}{2}$. In the deterministic case (i.e. when
$\gamma=0$),
when one considers the Manakov Equation, both the mass (equal to the
$\espace{L}^2$ norm) 
and the Hamiltonian $H$ given by
\[
 H(X)=\frac{1}{2}\int_{\espace{R}}\abs{\pds{}{x}{X}}^2dx
-\frac{1}{4}\int_{\espace{R}}\abs{X}^4dx
\]
 are conserved as time varies. This is not the case for the stochastic Manakov 
 equation that preserves only the mass, the Hamiltonian structure being 
 destroyed by the noise \cite{[gazeau], [gazeauPHD]}. Several numerical 
 approximations have been proposed to simulate the solution of the 
 deterministic equation, such as the Crank-Nicolson scheme \cite{[fortin]}, 
 the relaxation scheme \cite{[besse]} and Fourier split-step schemes 
 \cite{ [Taha], [Weideman]}. These schemes are known to be conservative
 for the $\espace{L}^2$ norm. The time centering method, used to discretize 
 the second order differential operator in the CN and relaxation schemes,
 allows them to be conservative for a discrete Hamiltonian. On the contrary,
 the splitting scheme fails in preserving exactly $H$. 
% However, it can be shown that this discretization has some pathologies :
% there is no local smoothing effect and no Strichartz estimates are available 
% \cite{[zuazua]}. Instead, if we had considered an implicit discretization of 
% the second order derivative, these schemes would have good dispersive
% estimates 
% as in the continuous case but would be dissipative.  

The question that needs to be addressed is the discretization of the noise
term. 
There are actually two different approaches based on the fact that, in the
continuous 
case, Equation \eqref{stochasticmanakov} and  Equation
\eqref{stochasticmanakovito} 
are equivalent. Hence, one may either propose a semi-implicit discretization of
the 
Stratonovich integral, using the midpoint rule, or an explicit discretization of
the
It\^o integral. However, in the discrete setting, the two formulations are not
equivalent.
Indeed, the discrete $\espace{L}^2$ norm is not preserved when considering an
Euler 
scheme based on the It\^o equation, while the semi-implicit discretization of
the 
Stratonovich integral allows preservation of the mass. Note that the
conservation of 
the discrete mass immediately leads to the unconditional $\espace{L}^2$
stability of 
the scheme. 

There is actually a more profound reason that keeps us from using a numerical
scheme 
based on the It\^o equation; this reason lies in the fact that the noise term
contains
a one order derivative. It is well known from the deterministic literature,
that 
explicit schemes for the advection equation require a stability criterion (CFL
condition)
to converge, while implicit schemes are stable. When considering the It\^o
approach, 
the discretization of the stochastic integral has to be explicit in order to be 
consistent with the equation, since an implicit discretization converges to the
backward
It\^o integral. Therefore, the It\^o approach leads to a CFL condition that
depends 
on Gaussian random variables. Since they are not bounded, this random stability
condition
may be very restrictive.

 We consider a semi-discrete Crank-Nicolson scheme given by
\begin{eqnarray}\label{CNApprox}\left\{ \begin{array}{ll}
  \vecX{n+1}{N}- \vecX{n}{N}   +  H_{\dt,n} \vecX{n+1/2}{N}  
  -i F\left(\vecX{n }{N},\vecX{n+1 }{N} \right)   \dt =0   \\[0.20cm]
 F\left(\vecX{n }{N},\vecX{n+1 }{N}\right) =\frac{1}{2} 
 \left(\abs{\vecX{n }{N}}^2 +\abs{\vecX{n+1 }{N}}^2 \right)\vecX{n+1/2}{N},  
   \end{array}\right.
\end{eqnarray}
where $\vecX{n+1/2}{N}=  \left( \vecX{n+1}{N} +\vecX{n}{N}\right)/2$, 
the time step is denoted $\dt$  and 
$\sqrt{\dt}\chi_k^n = W_k\left( (n+1)\dt\right)  - W_k\left( n\dt\right),
k=1,2,3  $
is the noise increment. The random matrix operator $H_{\dt,n}$ is defined by
\begin{align}\label{randomop}
 H_{\dt,n} = - i \dt I_2\partial_x^2 +\sqrt{\gamma  \dt}\sum_{k=1}^3\sigma_k 
\chi_k^n \partial_x.
\end{align}
with domain $\mathscr{D}(H_{\dt,n})= \espace{H}^2\left( \espace{R}\right)  
\subset \espace{L}^2\left( \espace{R}\right)$ independent of $n$, where 
$\espace{H}^2\left( \espace{R}\right)$ is the space of functions in
$\espace{L}^2$
such that their first two derivatives are in $\espace{L}^2$. The $2 \times 2$ 
identity matrix is denoted by $I_2$.

\saut{0.5}

This paper is organized as follows. In section \ref{Notation}, we introduce 
some notations and the main result of this article. Then, following the
approach 
of \cite{[gazeau], [gazeauPHD]} for the continuous equation, we construct a 
discrete random propagator associated to the linear equation. In section
\ref{LE}, 
we study the linear Euler scheme with semi-implicit discretization of the noise
and prove that the strong order is $1/2$. In section \ref{AS}, we give a result
on the strong order of convergence for a nonlinear equation with globally 
Lipschitz nonlinear terms. From this result and following the arguments of
\cite{[bouardorder]},
we obtain that the order of convergence in probability and the almost sure order
are $1/2$. 
This theoretical result is numerically recovered in section \ref{numsim} where
almost sure 
convergence curves are displayed. Finally some technical results are proved in
section \ref{app}.

 \subsection{Notation and main result}\label{Notation}
%We also recall that the limiting equation may have blow-up up solutions.
%Consequently all the convergence and time order will be valid up to the maximal
%existence time.
For all $p\geqslant 1$, we define 
$\espace{L}^p(\espace{R})= \left(L^p(\espace{R}; \espace{C}) \right)^2$ 
the Lebesgue spaces of functions with values in $\espace{C}^2$. 
Identifying $\espace{C}$ with $\espace{R}^2$, we define a scalar 
product on $\espace{L}^2\left( \espace{R}\right) $ by
\[
\left(u,v\right)_{\espace{L}^2}=\sum_{i=1}^2\Re\left\lbrace 
\int_{\espace{R}}u_i\overline{v_i}dx\right\rbrace.
\]
We denote by $\espace{H}^m\left( \espace{R}\right), m\in\espace{N} $ 
the space of functions in $\espace{L}^2$ such that their $m$ first derivatives
are 
in $\espace{L}^2$. We will also use $\espace{H}^{-m}$ the topological dual
space 
of $\espace{H}^m$ and denote $\dual{.}{.}$ the paring between $\espace{H}^m$
and 
$\espace{H}^{-m}$. The Fourier transform of a tempered distribution $v \in
\mathcal{S}'(\espace{R})$ 
is either denoted by $\widehat{v}$ or $\mathcal{F}v$.  If $s \in \espace{R}$
then 
$\espace{H}^s$ is the fractional Sobolev space of tempered distributions 
$v \in \mathcal{S}'(\espace{R})$ such  that 
$(1+\abs{\xi}^2)^{s/2}\widehat{v} \in \espace{L}^2$. 
Let $\left(E, \norm{.}{E} \right)$ and $\left(F, \norm{.}{F} \right)$ be two 
Banach spaces. 
We denote by $\mathcal{L}\left( E,F\right)$ the space of linear continuous 
functions from $E$ into $F$, endowed with its natural norm. If $I$ is an
interval 
of $\espace{R}$ and $1\leqslant p \leqslant +\infty$, then $L^p\left(I;E\right)
$ 
is the space of strongly Lebesgue measurable functions $f$ from $I$ into $E$
such 
that $t  \mapsto \norm{f(t)}{E}$ is in $L^p(I)$. The space 
$L^p\left(\Omega, E\right) $ is defined similarly where 
$\left(\Omega, \mathcal{F}, \espace{P}\right)$ is a probability space. 

\saut{0.5}

We now recall some results obtained in \cite{[gazeau], [gazeauPHD]} on the 
existence of a solution for the system \eqref{stochasticmanakov}. Let 
$\left(\Omega, \mathcal{F}, \espace{P}\right) $ be a probability space on which 
is defined a $3$-dimensional Brownian motion $ W(t)=\left( W_k(t)
\right)_{k=1,2,3}$.
We endow this space with the complete filtration $\mathcal{F}_t$ generated by
$W(t)$. 
The local existence result obtained for \eqref{stochasticmanakov} is stated
below.
\begin{theorem} 
Let $X_0=v \in \espace{H}^1(\espace{R})$ then there exists a maximal stopping
time 
$\tau^*(v,\omega)$ and a unique strong adapted solution $X$ (in the
probabilistic sense) 
to \eqref{stochasticmanakov}, such that $X \in C\left([0, \tau^*),
\espace{H}^1\left(\espace{R}\right) 
\right)$ $\espace{P}-a.s$. Furthermore the $\espace{L}^2$ norm is almost surely
preserved, 
i.e, $\forall t \in [0, \tau^*),
\norm{X(t)}{\espace{L}^2}=\norm{v}{\espace{L}^2}$ and 
the following alternative holds for the maximal existence time of the solution
: 
\[
\tau^*(v,\omega) = +\infty \ \text{ or } \ \limsup\limits_{t \nearrow
\tau^*(v,\omega)} 
\norm{X(t)}{\espace{H}^1}=+\infty.
\]
Moreover if the initial data $X_0$ belongs to $\espace{H}^m, m \geqslant 1$,
then the 
corresponding solution belongs to $\espace{H}^m$.
\end{theorem}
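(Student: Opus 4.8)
The plan is to follow the classical propagator-plus-fixed-point strategy for nonlinear Schr\"odinger equations, but with the deterministic dispersive group replaced by a \emph{stochastic} linear propagator adapted to the first-order noise. The starting observation is that, in the Stratonovich formulation \eqref{stochasticmanakov}, every operator in the linear part---namely $i\pds{2}{x}{X}$ and each $\sigma_k\pds{}{x}{X}$---is skew-adjoint on $\espace{L}^2$: the Pauli matrices are Hermitian and $\partial_x$ is anti-self-adjoint, and since $\sigma_k$ is a constant matrix it commutes with $\partial_x$, so $(\sigma_k\partial_x)^*=-\sigma_k\partial_x$. First I would construct the random evolution operator $U(t,s)$ solving the linear part of \eqref{stochasticmanakovito}, i.e. $dX = C_\gamma \pds{2}{x}{X}\,dt - \sqrt{\gamma}\sum_k \sigma_k \pds{}{x}{X}\,dW_k$. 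Because the coefficients do not depend on $x$, I would pass to the Fourier variable $\xi$, where $\partial_x$ becomes multiplication by $i\xi$ and the equation decouples into a family of $2\times2$ matrix-valued linear SDEs for $\widehat{X}(\xi)$. Solving these---keeping track that the $\sigma_k$ do not commute---produces a fundamental matrix $M(t,s,\xi)$ which is unitary for each $\xi$ (its Stratonovich generator is skew-Hermitian), hence a propagator acting as an isometry on $\espace{L}^2$; the It\^o correction $\tfrac{3\gamma}{2}\partial_x^2$ hidden in $C_\gamma$ is precisely what cancels the quadratic variation of the noise.

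Since the scalar multiplier $(1+\abs{\xi}^2)^{s/2}$ commutes with $M(t,s,\xi)$, the same pointwise identity $\abs{M(t,s,\xi)\widehat{v}(\xi)}=\abs{\widehat{v}(\xi)}$ shows that $U(t,s)$ is an isometry on every $\espace{H}^s$, in particular on $\espace{H}^1$ and $\espace{H}^m$. This is the delicate point the introduction flags: the noise carries a full derivative and cannot be absorbed as a lower-order perturbation, so one cannot bound the noise contribution through a Duhamel term; instead the skew-adjoint structure and the anti-commutation relations of the Pauli matrices must be used directly to keep the linear flow bounded on $\espace{H}^s$ without loss of derivatives. I expect this construction and the $\espace{H}^s$-isometry property of the stochastic propagator to be the main obstacle, precisely because the first-order character of the noise obstructs the usual perturbative treatment.

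With $U(t,s)$ in hand (pathwise defined, adapted, and strongly continuous), I would set up the mild formulation $X(t)=U(t,0)v + i\int_0^t U(t,s)\,\nl{}{X}(s)\,ds$ and run a Picard iteration in $C([0,T];\espace{H}^1)$. In dimension one $\espace{H}^1 \hookrightarrow \espace{L}^\infty$, so the cubic nonlinearity $\nl{}{X}=\abs{X}^2X$ is locally Lipschitz on $\espace{H}^1$; since $U$ is a random isometry, a contraction on a ball of $C([0,T];\espace{H}^1)$ for $T$ small (treating $\omega$ as a parameter, or equivalently truncating the nonlinearity and removing the cutoff by a stopping-time argument) yields a unique local adapted solution for a.e.\ $\omega$. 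Concatenating maximal local solutions defines $\tau^*(v,\omega)$ and gives the blow-up alternative: if $\limsup_{t\nearrow\tau^*}\norm{X(t)}{\espace{H}^1}<+\infty$, the local existence time is bounded below along the approach to $\tau^*$ and the solution extends beyond it, a contradiction.

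Finally, $\espace{L}^2$ conservation follows by applying It\^o's formula to $\norm{X(t)}{\espace{L}^2}^2$: the skew-adjointness of $\sigma_k\partial_x$ makes each $\Re\left(\sigma_k\pds{}{x}{X},X\right)_{\espace{L}^2}$ purely imaginary, so the martingale part vanishes; the drift $-3\gamma\norm{\pds{}{x}{X}}{\espace{L}^2}^2$ coming from $\tfrac{3\gamma}{2}\partial_x^2$ is cancelled exactly by the quadratic variation $\sum_{k=1}^3\gamma\norm{\sigma_k\pds{}{x}{X}}{\espace{L}^2}^2=3\gamma\norm{\pds{}{x}{X}}{\espace{L}^2}^2$ (using $\sigma_k^2=I_2$); and the nonlinear contribution vanishes because $\Re\left(i\,\nl{}{X},X\right)_{\espace{L}^2}=\Re\left(i\int_{\espace{R}}\abs{X}^4\,dx\right)=0$. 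Persistence in $\espace{H}^m$ for $m\geqslant1$ is then obtained by induction, repeating the energy/fixed-point estimate at the $\espace{H}^m$ level: the top-order noise term again integrates by parts to zero by skew-adjointness, so no derivative is lost and the solution started in $\espace{H}^m$ remains in $\espace{H}^m$ up to the same $\tau^*$.
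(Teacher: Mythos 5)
The paper itself does not prove this theorem: it is recalled verbatim from \cite{[gazeau], [gazeauPHD]}, so the only meaningful comparison is with the proof given there, and your sketch follows essentially that same route. The Fourier-space construction of the random unitary propagator (for each frequency $\xi$ a $2\times 2$ linear Stratonovich SDE with skew-Hermitian generators, hence a unitary fundamental matrix and an isometry on every $\espace{H}^s$, with no closed-form expression since the Pauli matrices do not commute), followed by the mild formulation with a cut-off/fixed-point argument in $C([0,T];\espace{H}^1)$ using the embedding $\espace{H}^1(\espace{R})\hookrightarrow \espace{L}^\infty$, It\^o's formula (with the It\^o correction cancelling the quadratic variation) for the $\espace{L}^2$ conservation, and induction on derivatives for $\espace{H}^m$ persistence, is precisely the architecture of the proof in those references.
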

The noise ($\gamma \neq 0$ in Equation \eqref{stochasticmanakov}) destroys the
Hamiltonian
structure of the deterministic equation and it seems that no control on the
evolution of 
the $\espace{H}^1$ norm is available from the evolution of the energy. However,
the occurrence 
of blow up in this model remains an open question. We assume that the set 
$\left\lbrace \dt \right\rbrace = \left\lbrace \dt_n \right\rbrace_{n \in
\espace{N}}$ 
is a discrete sequence converging to $0$. We define a final time $T>0$ and an
interval 
$[0,T]$ on which we will consider the approximation of the solution of
\eqref{stochasticmanakov}.
Moreover $N_T= [T/\dt]$, the integer part of $T/\dt$. Similarly for any stopping
time $\tau$, $N_{\tau}=[\tau/\dt] $. Moreover we write $t_n = n\dt$ for any $n
\in [\![  0, N  ]\!]$
where $N$ is either $N_T$ or $N_{\tau}$ according to the situation. We denote
by 
$L^{\infty}\left(0, T;\espace{H}^m\right)$ the space of all bounded sequences
for 
$n = 0, \cdots, N_T$ with values in $\espace{H}^m$ endowed with the supremum
norm
\[
\norm{\vecX{}{N}}{L^{\infty}\left(0, T;\espace{H}^m\right)}= \underset{n\dt
\leqslant T}
{\sup_{n \in \espace{N}}}\norm{\vecX{n}{N}}{\espace{H}^m}.
\]
Moreover for a $n\times n$ matrix $A =  \left\lbrace a_{ij} \right\rbrace $, the
uniform 
norm is defined by 
\[
 \normm{A}{\infty}= \max_{1 \leqslant i \leqslant n} \sum_{j=1}^n \abs{a_{ij}}
\]
and the spectral norm of $A$ is defined by 
\[
 \normm{A}{2}= \sqrt{\rho\left( A^*A\right) }
\]
where $A^*$ is the adjoint matrix of $A$ and $\rho$ is the spectral radius.
%i.e. the largest eigenvalue of the positive-semidefinite matrix $A^*A$
Finally we denote $dW_0(u) =du$ and we introduce the notations for 
$j,k \in \llbracket 0,3 \rrbracket$
\begin{align*}
&W^{n,s}_j\left( f\right)  = \int_{t_n}^s f(u) dW_j(u)\\
&W^{n,n+1}_{j,k}\left( f\right) = \int_{t_n}^{t_{n+1}}  \int_{t_n}^s f(u)
dW_j(u) dW_k(s).
\end{align*}
We recall that the Pauli matrices have the following properties
\begin{property}\label{pauli_matrices}
Let $j,k,l \in \llbracket 0,3 \rrbracket$, then 
\begin{itemize}
\item  Commutation relations : $\left[\sigma_j, \sigma_k \right] 
= 2i\sum_{l=1}^3\varepsilon_{jkl}\sigma_l$.
\item  Anticommutation relations : $\sigma_j \sigma_k + \sigma_k \sigma_j 
=2\delta_{jk}\cdot I_2$
 and $\sigma_j = \sigma_j^*$,
\end{itemize}
where $\varepsilon_{jkl}=(j-k)(k-l)(l-j)/2$ is the Levi-Civita symbol.
\end{property}
  We denote by $\vecXexact{n}{} =  \vecX{}{ }\left(t_{n}\right)$ the solution
of 
  Equation \eqref{stochasticmanakov}, evaluated at the point $t_n$. Let us now
give 
  the main result of this paper stating that the approximation of Equation 
  \eqref{stochasticmanakov} by the scheme \eqref{CNApprox} has an order $1/2$ in
probability.
\begin{theorem} \label{errorNL}
Assume that $X_0 \in \espace{H}^6$, then for any stopping time $\tau < \tau^*
\wedge T$ 
almost surely we have
\[
 \lim_{C \to +\infty}\espace{P}\left( \max_{n = 0, \ldots, N_{\tau}}
\norm{\vecX{n}{} -
 \vecXexact{n}{}}{\espace{H}^1} \geqslant C \dt^{1/2} \right) =0,
\]
uniformly in $\dt$. Then we say, according to \cite{[printems]}, that the scheme
has an
order $1/2$ in probability. Moreover, for any   $\delta <\frac{1}{2}$, there
exists a 
random variable $K_{\delta}$ such that 
\[
 \max_{n = 0, \ldots, N_{\tau}} \norm{\vecX{n}{} -
\vecXexact{n}{}}{\espace{H}^1} \leqslant K_{\delta}(T,\omega) \dt^{\delta}.
\]
\end{theorem}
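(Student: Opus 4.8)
The plan is to follow the truncation and localization argument of \cite{[bouardorder]}, which reduces the analysis of the genuinely nonlinear scheme to a globally Lipschitz one; the latter rests in turn on a sharp estimate for the underlying \emph{random} linear propagator, which is the heart of the matter. Throughout I would use that, in dimension one, $\espace{H}^1(\espace{R})$ is an algebra embedding into $L^\infty$, so that $\nl{}{X}=\abs{X}^2X$ is only locally Lipschitz on $\espace{H}^1$, which is exactly why one cannot hope for a plain strong order and must pass through order in probability.

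I would first rewrite one step of \eqref{CNApprox} in propagator form. Since $\partial_x$ and $\partial_x^2$ are skew-adjoint and the Pauli matrices are self-adjoint (Property \ref{pauli_matrices}), the operator $H_{\dt,n}$ of \eqref{randomop} is skew-adjoint on $\espace{L}^2$ with domain $\espace{H}^2$; hence $I_2+\frac12 H_{\dt,n}$ is boundedly invertible and the linear part of the scheme is driven by the Cayley transform $T_{\dt,n}=\left(I_2+\frac12 H_{\dt,n}\right)^{-1}\left(I_2-\frac12 H_{\dt,n}\right)$, which is unitary on $\espace{L}^2$. This gives at once the unconditional $\espace{L}^2$ stability (discrete mass conservation) and, by a contraction argument for small $\dt$ once the nonlinearity is globally Lipschitz, the well-posedness of each implicit step.

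The core step is the linear estimate. I would compare the discrete random propagator $\prod_k T_{\dt,k}$ with the exact random propagator of the linear part of \eqref{stochasticmanakovito}, constructed as in \cite{[gazeau]}. Expanding both by a stochastic Taylor expansion, using the iterated integrals $W^{n,s}_j$, $W^{n,n+1}_{j,k}$ and reorganizing the matrix products via Property \ref{pauli_matrices}, the leading orders match; in particular the anticommutation relations give $\sum_{k=1}^3\sigma_k^2=3I_2$, which reproduces the It\^o--Stratonovich correction $\frac{3\gamma}{2}\partial_x^2$ of $C_\gamma$ at leading order. The remainder carries spatial derivatives of the solution up to order four, which is why the regularity $X_0\in\espace{H}^6$ (hence $\widetilde{X}\in\espace{H}^6$, by the last assertion of the local existence theorem) is needed to control the $\espace{H}^1$ error. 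As flagged in the introduction, the obstacle is that the coefficients are unbounded Gaussian random matrices and the global error is not a martingale, so the Burkholder--Davis--Gundy inequality cannot be applied directly; I would split the error into a genuine martingale part (handled by Burkholder--Davis--Gundy) and a finite-variation remainder (handled by direct moment estimates), yielding a linear error of order $\dt^{1/2}$ in $\espace{L}^p\left(\Omega;\espace{H}^1\right)$. This is the main difficulty.

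With the linear estimate in hand, I would truncate: fix $\theta\in C_c^\infty(\espace{R})$ with $\theta\equiv 1$ near $0$ and set $\nl{R}{X}=\theta\left(\norm{X}{\espace{H}^1}/R\right)\nl{}{X}$, which is globally Lipschitz on $\espace{H}^1$. For the truncated continuous equation and the truncated scheme, strong order $1/2$ is obtained by writing the error through a discrete Duhamel formula, inserting the linear estimate, controlling the nonlinear increments via the global Lipschitz constant, and closing with a discrete Gronwall lemma, giving $\left(\espace{E}\max_{n\le N_\tau}\norm{\vecX{n}{}-\vecXexact{n}{}}{\espace{H}^1}^p\right)^{1/p}\le C_{R,p}\,\dt^{1/2}$ for the truncated problem. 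On the event $\Omega_R=\{\sup_{t\le\tau}\norm{X(t)}{\espace{H}^1}\le R\}\cap\{\max_{n\le N_\tau}\norm{\vecX{n}{}}{\espace{H}^1}\le R\}$ the truncated and untruncated objects coincide, so splitting
\[
\espace{P}\!\left(\max_{n\le N_\tau}\norm{\vecX{n}{}-\vecXexact{n}{}}{\espace{H}^1}\ge C\dt^{1/2}\right)\le \espace{P}\left(\Omega_R^{\,c}\right)+\espace{P}\!\left(\text{truncated error}\ge C\dt^{1/2}\right),
\]
the second term is bounded by $\left(C_{R,p}/C\right)^p$, hence tends to $0$ as $C\to+\infty$ \emph{uniformly in} $\dt$ by Chebyshev's inequality and the strong order, while $\espace{P}\left(\Omega_R^{\,c}\right)$ is made arbitrarily small by taking $R$ large, since for $\tau<\tau^*$ the continuous solution has almost surely finite $\espace{H}^1$ norm on $[0,\tau]$. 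This yields order $1/2$ in probability, and the almost sure order $\delta<\frac12$ then follows from a Borel--Cantelli argument along a sequence $\dt_m\to 0$, as in \cite{[bouardorder], [printems]}.
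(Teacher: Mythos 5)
Your overall architecture --- a strong $L^{2p}(\Omega)$ estimate for the linear scheme, a cut-off making the nonlinearity globally Lipschitz, strong order $1/2$ for the truncated problem via a discrete Duhamel formula, then Chebyshev plus localization for the order in probability and Borel--Cantelli for the almost sure rate --- is exactly that of the paper (Sections \ref{LE} and \ref{AS}, following \cite{[bouardorder]}), and your stochastic Taylor expansion of the one-step error matches Step 2 of the proof of Proposition \ref{linearstabilite}. The genuine gap is in the one step you yourself call the heart of the matter: the linear estimate. Your plan there is to ``split the error into a genuine martingale part and a finite-variation remainder'', but no such decomposition exists. The global error of the linear scheme is $e^n=\sum_{l=1}^n \mathcal{U}^{n,l}_{\dt}\bigl(\epsilon_1^{l-1}+\epsilon_2^{l-1}\bigr)$, where the local errors $\epsilon_q^{l-1}$ are (iterated) stochastic integrals over $[t_{l-1},t_l]$, but the operators $\mathcal{U}^{n,l}_{\dt}$ are built from $U_{\dt,j}$ and $T_{\dt,l-1}^{-1}$ with $j\geqslant l$, hence depend on the Brownian increments on all of $[t_{l-1},t_n]$. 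The non-adaptedness sits in the operator coefficients multiplying the increments: for fixed $n$ the summands are neither martingale differences in $l$ nor of bounded variation, so there is no Doob-type splitting to appeal to, and ``direct moment estimates'' on whatever is left over lose a factor $N$ with nothing to compensate it. This is precisely the obstruction the paper flags in its introduction as its innovative point.

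What is missing is the mechanism of Step 3 of the proof of Proposition \ref{linearstabilite} and of Lemma \ref{errorestimate}. One factors $\mathcal{U}^{n,l}_{\dt}=\mathcal{U}^{n,1}_{\dt}\mathcal{V}^{l-1}_{\dt}$, where the factor $\mathcal{U}^{n,1}_{\dt}$ does not depend on $l$ and is bounded on $\espace{L}^2$ uniformly, so it can be pulled out of the sum; one then writes $\mathcal{V}^{l-1}_{\dt}=\mathcal{V}^{l-2}_{\dt}+\bigl(\mathcal{V}^{l-1}_{\dt}-\mathcal{V}^{l-2}_{\dt}\bigr)$. The part with $\mathcal{V}^{l-2}_{\dt}$ is $\mathcal{F}_{t_{l-1}}$-measurable, hence commutes with the stochastic integrals defining $\epsilon_q^{l-1}$ and Burkholder--Davis--Gundy applies: this is your ``martingale part'', but it only comes into existence after this factorization and index shift. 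The difference part is summed crudely over the $N$ terms, which is affordable only because of two quantified smallness inputs: Fourier-multiplier estimates (Lemmas \ref{continueL2} and \ref{localerror}) showing that $\mathcal{V}^{l-1}_{\dt}-\mathcal{V}^{l-2}_{\dt}$ is $O(\sqrt{\dt})$ as a map from $\espace{H}^1$ to $\espace{L}^2$, with random constants whose moments are bounded uniformly in $l$ and $\dt$ (nontrivial, since the coefficients are unbounded Gaussians --- this occupies most of Section \ref{app}), and the fact that $\epsilon_q^{l-1}$ is itself $O(\dt)$ in $L^{4p}\bigl(\Omega;\espace{H}^1\bigr)$; together $N^{2p}\cdot\dt^{p}\cdot\dt^{2p}=(N\dt)^{2p}\dt^{p}=T^{2p}\dt^{p}$, which is where the order $1/2$ and the $\espace{H}^6$ regularity requirement actually come from. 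A secondary omission: your localization event involves $\max_{n\leqslant N_{\tau}}\norm{\vecX{n}{}}{\espace{H}^1}$ for the \emph{untruncated} implicit scheme, whose solution you never construct --- your contraction argument covers only the truncated scheme; the paper needs Lemma \ref{stabilityball} and the explicit construction \eqref{constructsol} with the discrete blow-up time $\tau^*_{\dt}$ to even define $\vecX{n}{}$ in the statement of Theorem \ref{errorNL}.
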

\section{The linear equation.}\label{LE}
In this section, we study the approximation of the solution of the linear
equation. In other 
words, we estimate the error between the solution of
\begin{equation}\label{linearmanakovlimite}
idX(t)+ \pds{2}{x}{X(t)}  dt+i\sqrt{\gamma}\sum_{k=1}^3
\sigma_k\pds{}{x}{X(t)}\circ dW_k(t)=0,
\quad t \geqslant 0, x \in \espace{R}
\end{equation}
and its approximation by  the semidiscrete mid-point scheme
\begin{equation}\label{ManakovPMDlimitelineairesemidiscret}
  \vecX{n+1}{N}- \vecX{n}{N}   +  H_{\dt,n} \vecX{n+1/2}{N}=0 ,
\end{equation}
where the expression of $H_{\dt,n} $ is given in \eqref{randomop}. The operator
$H_{\dt,n}$ 
is easily described thanks to the Fourier transform. Indeed, for any $\xi \in
\espace{R}$ 
\begin{equation}\label{symbol}
 \mathcal{F}\left( H_{\dt,n}\left( \xi\right)X \right) 
=\displaystyle{\begin{pmatrix}  
 i \dt \abs{\xi}^2  +i\sqrt{\gamma  \dt}\chi_3^n \xi & i\sqrt{\gamma  \dt}
 \left(\chi_1^n-i\chi_2^n \right) \xi\\[0.2cm]
     i\sqrt{\gamma  \dt}\left(\chi_1^n+i\chi_2^n \right) \xi          
     & i \dt \abs{\xi}^2 -i\sqrt{\gamma  \dt}\chi_3^n \xi
              \end{pmatrix}}\widehat{X}.
\end{equation}
Moreover, we set
\begin{equation}
 T_{\dt,n}=(\Id+\frac{1}{2}H_{\dt,n})\label{OpEx2},
\end{equation}
where $\Id$ is the identity mapping in $\espace{L}^2$.
To lighten the notation, we do not write the dependence in $N$ of the unknown 
$\vecX{ }{N}$. The aim of this section is to give an existence result of an
adapted 
solution for the scheme \eqref{ManakovPMDlimitelineairesemidiscret} and to give
an 
estimate of the discretization error. The results are stated in Propositions 
\ref{linearexistence} and \ref{linearstabilite} below.
\subsection{Existence and stability}
The next proposition states that the solution of the scheme 
\eqref{ManakovPMDlimitelineairesemidiscret} is uniquely defined and adapted, 
and that the mass is preserved.
\begin{proposition}\label{linearexistence}
Given $\vecX{ }{0} \in \espace{H}^m$ for $m \in \espace{N}$, there exists a 
unique adapted discrete solution $\left( \vecX{n}{} \right)_{n = 0, \cdots,
N_T}$ 
to \eqref{ManakovPMDlimitelineairesemidiscret} that belongs to $L^{\infty}(0, T;
\espace{H}^m)$.
Moreover the $\espace{H}^m$ norm of the solution $\vecX{n}{}$ of 
\eqref{ManakovPMDlimitelineairesemidiscret} is constant i.e. for all 
$n\in \llbracket 0, N_T \rrbracket$
\begin{equation}\label{normconstant}
\norm{\vecX{n}{}}{\espace{H}^m}=\norm{\vecX{}{0}}{\espace{H}^m}.
\end{equation}
\end{proposition}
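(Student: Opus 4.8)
The plan is to recast the one-step map of the scheme \eqref{ManakovPMDlimitelineairesemidiscret} as a Fourier multiplier whose symbol is a \emph{unitary} matrix, so that existence, uniqueness and norm conservation all follow at once from an isometry property. First I would rewrite the scheme as a recursion. Using $\vecX{n+1/2}{N}=(\vecX{n+1}{N}+\vecX{n}{N})/2$ and the operator $T_{\dt,n}=\Id+\tfrac12 H_{\dt,n}$ of \eqref{OpEx2}, equation \eqref{ManakovPMDlimitelineairesemidiscret} reads $T_{\dt,n}\vecX{n+1}{} = (\Id-\tfrac12 H_{\dt,n})\vecX{n}{}$, so that, provided $T_{\dt,n}$ is invertible, $\vecX{n+1}{} = \mathcal{S}_n\vecX{n}{}$ with $\mathcal{S}_n := T_{\dt,n}^{-1}(\Id-\tfrac12 H_{\dt,n})$.

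The key structural observation is that $H_{\dt,n}$ is skew-symmetric on $\espace{L}^2$. From the symbol \eqref{symbol} one reads $\mathcal{F}(H_{\dt,n}X)(\xi) = iM(\xi)\widehat{X}(\xi)$, where $M(\xi)$ is the matrix obtained by factoring out $i$; its diagonal entries are real and its two off-diagonal entries are complex conjugates of each other because $\chi_k^n$ and $\xi$ are real, so $M(\xi)$ is Hermitian and the symbol $iM(\xi)$ is skew-Hermitian for every $\xi$. Two consequences follow. First, $I_2+\tfrac{i}{2}M(\xi)$ has eigenvalues $1+\tfrac{i}{2}\lambda$ with $\lambda$ real, which never vanish, so $T_{\dt,n}$ is invertible with uniformly bounded inverse symbol. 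Second, the symbol of $\mathcal{S}_n$ is the Cayley transform $S_n(\xi) = (I_2+\tfrac{i}{2}M(\xi))^{-1}(I_2-\tfrac{i}{2}M(\xi))$, which is unitary: diagonalizing the Hermitian matrix $M(\xi)$ in an orthonormal basis reduces $S_n(\xi)$ to the scalar factors $(1-\tfrac{i}{2}\lambda)/(1+\tfrac{i}{2}\lambda)$, each of modulus one.

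With this in hand, existence and uniqueness are immediate: since $S_n(\xi)$ is unitary it is bounded by $1$ uniformly in $\xi$, so $\mathcal{S}_n$ is a bounded operator on every $\espace{H}^m$ (it commutes with the scalar Fourier multiplier $(1+\abs{\xi}^2)^{m/2}$), and the recursion $\vecX{n+1}{}=\mathcal{S}_n\vecX{n}{}$ defines a unique element of $\espace{H}^m$ at each step, starting from $\vecX{}{0}\in\espace{H}^m$. Norm conservation then follows from Parseval: writing $\norm{u}{\espace{H}^m}^2 = \int_{\espace{R}}(1+\abs{\xi}^2)^m\abs{\widehat{u}(\xi)}^2\,d\xi$ and using $\abs{S_n(\xi)\widehat{\vecX{n}{}}(\xi)}=\abs{\widehat{\vecX{n}{}}(\xi)}$ (unitarity) gives $\norm{\vecX{n+1}{}}{\espace{H}^m}=\norm{\vecX{n}{}}{\espace{H}^m}$, whence \eqref{normconstant} by induction. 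Equivalently, in physical space one may pair \eqref{ManakovPMDlimitelineairesemidiscret} with $\vecX{n+1/2}{}$: the telescoping term yields $\tfrac12(\norm{\vecX{n+1}{}}{\espace{L}^2}^2-\norm{\vecX{n}{}}{\espace{L}^2}^2)$, while the term $(H_{\dt,n}\vecX{n+1/2}{},\vecX{n+1/2}{})_{\espace{L}^2}$ vanishes by skew-symmetry; applying this to $\partial_x^m\vecX{n}{}$, which solves the same scheme since $H_{\dt,n}$ has $x$-independent coefficients, upgrades to $\espace{H}^m$.

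Finally, adaptedness follows by induction on $n$: $S_n(\xi,\omega)$ is a smooth, hence measurable, function of the increments $\chi^n_k$, which are $\mathcal{F}_{t_{n+1}}$-measurable, so if $\vecX{n}{}$ is $\mathcal{F}_{t_n}$-measurable then $\vecX{n+1}{}=\mathcal{S}_n\vecX{n}{}$ is $\mathcal{F}_{t_{n+1}}$-measurable, the base case being the deterministic datum $\vecX{}{0}$. The only genuinely delicate point is the functional-analytic bookkeeping around the Cayley transform: individually $\Id-\tfrac12 H_{\dt,n}$ loses two derivatives while $T_{\dt,n}^{-1}$ regains them, so one must verify, exactly as the uniform unitarity of $S_n(\xi)$ does, that the composition $\mathcal{S}_n$ is genuinely bounded (indeed isometric) on $\espace{H}^m$ rather than merely densely defined; everything else is routine.
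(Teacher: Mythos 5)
Your proof is correct, but it takes a genuinely different technical route from the paper's. The paper argues abstractly: writing $H_{\dt,n}$ as the sum of $A_{\dt}=\dt I_2\partial_x^2$ and the first-order noise term $B_{\dt,n}=i\sqrt{\gamma\dt}\sum_k\sigma_k\chi_k^n\partial_x$, it shows (via the Pauli identities, Cauchy--Schwarz and Young) that $B_{\dt,n}$ is relatively bounded by $A_{\dt}$ with relative bound $<1$ almost surely, invokes the Kato--Rellich theorem to conclude that $iH_{\dt,n}$ is selfadjoint on $\mathscr{D}(H_{\dt,n})=\espace{H}^2$, and then obtains the invertibility of $T_{\dt,n}$ and the unitarity $U_{\dt,n}^*U_{\dt,n}=\Id$ of the Cayley transform from abstract operator theory. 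You instead diagonalize everything explicitly on the Fourier side: the symbol of $H_{\dt,n}$ is $iM(\xi)$ with $M(\xi)$ Hermitian, so the one-step multiplier is, pointwise in $\xi$, a unitary $2\times 2$ matrix, and Plancherel does the rest. Both proofs rest on the same structural fact --- the midpoint scheme is the Cayley transform of a skew-adjoint operator --- but your symbol computation is more elementary (no perturbation theory needed), yields the isometry on every $\espace{H}^m$ in one stroke since the unitary symbol commutes with the scalar weight $(1+\abs{\xi}^2)^{m/2}$ (the paper only spells out the $\espace{L}^2$ case, leaving the $\espace{H}^m$ identity \eqref{normconstant} implicit), and is consistent with the multiplier techniques the paper itself deploys later, in Lemmas \ref{continueL2} and \ref{localerror}. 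What the paper's abstract route buys is robustness: it identifies the operator domain directly and would survive modifications (variable coefficients, a bounded domain) where no explicit Fourier diagonalization is available. Your adaptedness argument and your alternative physical-space conservation argument (pairing with $\vecX{n+1/2}{}$ and using skew-symmetry, then applying $\partial_x^m$ thanks to the constant coefficients) coincide with what the paper does or leaves to the reader.
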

\begin{proof}[Proof of Proposition \ref{linearexistence}]
Assume that $\vecX{n }{ }$ is a $\mathcal{F}_{n\dt}-$measurable random variable 
with values in $\espace{H}^m$. We set $A_{\dt}=\dt I_2\partial_x^2$ and 
$B_{\dt,n}=i\sqrt{\gamma\dt} \sum_{k=1}^3 \sigma_k \chi_k^n \partial_x $, 
for a.e. $\omega \in \Omega$. Using Property  \ref{pauli_matrices} of the 
Pauli matrices, Cauchy-Schwarz and Young inequalities, we may prove that a.s.
\begin{equation*}
\norm{B_{\dt,n}v}{\espace{L}^2}^2
 \leqslant \frac{1}{2}
\norm{A_{\dt,n}v}{\espace{L}^2}^2+\frac{C(\gamma,\omega)^2}{2}
\norm{v}{\espace{L}^2}^2,
%&\leqslant &
%\frac{1}{2}\norm{H_{\dt} v}{\espace{L}^2}+\frac{1}{2}\norm{\sqrt{\gamma\dt}
%\sum_{k=1}^3 
% \sigma_k \chi_k^n \partial_x v}{\espace{L}^2} +
%\frac{C(\gamma,\omega)}{2}\norm{v}{\espace{L}^2}.
\end{equation*}
where $C(\gamma,\omega) = 3\gamma \left(\chi_k^n(\omega) \right)^2$. 
Since $\abs{C(\gamma,\omega)} < +\infty \ \text{a.s.}$, we deduce thanks 
to the Kato-Rellich Theorem that $iH_{\dt,n}$ is selfadjoint in $\espace{L}^2$ 
with domain $\espace{H}^2$ and it follows that $T_{\dt,n}$ is invertible 
from $\espace{H}^2$ into $\espace{L}^2$. Hence, the unique
$\mathcal{F}_{t_{n+1}}$-- measurable 
solution is given by $\vecX{n+1}{}= U_{\dt,n}\vecX{n}{}$ a.s, where
\begin{equation}
 U_{\dt,n} =(\Id+\frac{1}{2}H_{\dt,n})^{-1}(\Id-\frac{1}{2}H_{\dt,n}).
\label{OpEx1} 
\end{equation}
 The conservation of the $\espace{L}^2$ norm follows because $H_{\dt,n}$ is
skew 
 symmetric and $ U_{\dt,n}^* U_{\dt,n}=\Id$.
\end{proof}
\begin{rmq}
 In our case, the operator $T_{\dt,n}$ is invertible for every $\dt$. Thus, 
 the implementation of the scheme  \eqref{ManakovPMDlimitelineairesemidiscret}
 does not require to use a truncation of the noise term as in \cite{[milstein]}
 to insure stability.
\end{rmq}  
\subsection{Strong order of convergence}
Let us now consider the order of convergence of the Crank Nicolson scheme 
\eqref{ManakovPMDlimitelineairesemidiscret}. To this purpose, we denote by 
$\vecXexact{n}{} =  \vecX{}{ }\left(t_{n}\right)$ the solution of 
\eqref{linearmanakovlimite}, evaluated at the point $t_n$, and define 
the vector error $e^{n}= \vecXexact{n}{}-\vecX{n}{}$. The error estimates
is given in the next result.
\begin{proposition}\label{linearstabilite}
If $\vecX{ }{0} \in \espace{H}^{m+5}$, $m \in \espace{N}$, then the scheme 
\eqref{ManakovPMDlimitelineairesemidiscret} is convergent and for any $p
\geqslant 1$ 
\begin{align}
\espace{E} \left(\max_{n \in \llbracket 0, N_T \rrbracket}
\norm{e^n}{\espace{H}^{m}}^{2p}\right)   \leqslant 
C(T, \gamma, p, \norm{\vecX{ }{0}}{\espace{H}^{m+5}} )  \dt^{p}.
\label{strongorder}
\end{align}
\end{proposition}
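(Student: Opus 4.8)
The plan is to estimate the local (one-step) consistency error and then propagate it through the discrete evolution using the $\espace{L}^2$-isometry (and its $\espace{H}^m$ analogue) established in Proposition \ref{linearexistence}. First I would write the scheme \eqref{ManakovPMDlimitelineairesemidiscret} in the form $\vecX{n+1}{}=U_{\dt,n}\vecX{n}{}$ with $U_{\dt,n}$ given in \eqref{OpEx1}, and derive the analogous exact one-step relation $\vecXexact{n+1}{}=U_{\dt,n}\vecXexact{n}{}+\rho^n$, where $\rho^n$ is the local truncation error obtained by inserting the exact solution into the scheme. Since $e^{n+1}=U_{\dt,n}e^n+\rho^n$, iterating gives
\begin{equation*}
e^N=\sum_{n=0}^{N-1}\Bigl(\prod_{j=n+1}^{N-1}U_{\dt,j}\Bigr)\rho^n,
\end{equation*}
and because each $U_{\dt,j}$ is an $\espace{H}^m$-isometry (the operators $H_{\dt,j}$ commute with differentiation, as they have constant-in-$x$ Fourier symbols), the product of propagators has operator norm $1$ on $\espace{H}^m$. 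Hence $\norm{e^N}{\espace{H}^m}\le\sum_{n=0}^{N-1}\norm{\rho^n}{\espace{H}^m}$ pathwise, and the whole difficulty is reduced to controlling the local errors $\rho^n$.

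The core of the argument is therefore the analysis of $\rho^n$. The plan is to compute it explicitly by comparing the midpoint/Crank--Nicolson step applied to $\vecXexact{}{}$ with the exact increment of the It\^o equation \eqref{stochasticmanakovito} (with $F=0$). Using the iterated-integral notation $W^{n,n+1}_{j,k}$ introduced before Property \ref{pauli_matrices}, I would Taylor/It\^o-expand $\vecXexact{n+1}{}$ to sufficiently high order, expand the rational propagator $U_{\dt,n}=(\Id-\tfrac12 H_{\dt,n})+\tfrac14 H_{\dt,n}^2\pm\cdots$ in powers of $H_{\dt,n}$, and identify which terms cancel. The expected outcome is that $\rho^n$ splits into a \emph{martingale-like} part $M^n$, whose summands are (iterated) stochastic integrals with zero conditional expectation given $\mathcal{F}_{t_n}$, and a \emph{remainder} part $R^n$ of size $O(\dt^{3/2})$ pathwise in the appropriate $\espace{H}^m$ norm (this is where the loss of five derivatives enters: each spatial derivative from the noise operator $\sum_k\sigma_k\partial_x$ costs regularity, and matching the scheme to strong order $1/2$ requires expanding up to terms involving $\partial_x^5$, hence the hypothesis $\vecX{}{0}\in\espace{H}^{m+5}$).

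The main obstacle, flagged in the introduction, is that $\sum_{n}\prod_{j>n}U_{\dt,j}\,\rho^n$ is \emph{not} a martingale, because the propagators $U_{\dt,j}$ carry their own randomness through $\chi^n_k$, so I cannot apply Burkholder--Davis--Gundy directly to the full sum. The plan is to treat the two pieces separately. For the pathwise-small remainder, I would bound $\espace{E}\max_n\norm{\sum_{m<n}R^m}{\espace{H}^m}^{2p}$ crudely: there are $O(\dt^{-1})$ terms each of size $O(\dt^{3/2})$, and after taking $2p$-th moments of the Gaussian coefficients (all moments of $\chi^n_k$ are finite and $\dt$-independent after the $\sqrt{\dt}$ scaling) this contributes $O(\dt^{(3/2-1)\cdot 2p})=O(\dt^p)$. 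For the martingale-type part, the standard trick is to insert telescoping: rewrite $\prod_{j>n}U_{\dt,j}\,M^n$ so as to isolate a genuine discrete martingale by conditioning, i.e. use the tower property to peel off the $\mathcal{F}_{t_n}$-conditional mean of each increment, absorbing the commutator between $U_{\dt,j}$ and $M^n$ into additional higher-order remainder terms. Then a discrete Burkholder--Davis--Gundy (or a direct $\espace{E}\,\max_n(\cdot)^{2p}$ estimate via the discrete Doob inequality) applied to the resulting martingale, together with Minkowski's integral inequality to exchange the $\espace{H}^m$ norm with the expectation, yields the quadratic-variation bound $O\bigl(\dt^{-1}\cdot(\dt^{1/2})^{2}\bigr)^{p}=O(\dt^p)$, giving the claimed rate \eqref{strongorder}. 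I expect the delicate bookkeeping of the commutators $[U_{\dt,j},\sigma_k\partial_x]$ and verifying that every non-martingale piece they generate is genuinely $O(\dt^{3/2})$ in $\espace{H}^m$ (using that $U_{\dt,j}$ and $H_{\dt,j}$ are bounded on $\espace{H}^{m}$ uniformly after the isometry estimate) to be the technically heaviest step.
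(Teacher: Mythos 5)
Your overall skeleton --- the discrete Duhamel formula $e^N=\sum_{n}\bigl(\prod_{j>n}U_{\dt,j}\bigr)\rho^n$, the $\espace{H}^m$-isometry of the propagators, the It\^o--Taylor expansion of the one-step truncation error into a martingale part plus an $O(\dt^{3/2})$ remainder, and the count of roughly five extra derivatives --- matches Steps 1 and 2 of the paper's proof, and your brute-force treatment of the remainder (Minkowski plus the contraction property, $N$ terms of size $\dt^{3/2}$) is essentially the paper's estimate \eqref{bound11}. The genuine gap is in your treatment of the martingale part, which is exactly where the difficulty of this paper lies. Your proposed fix (``use the tower property to peel off the $\mathcal{F}_{t_n}$-conditional mean of each increment, absorbing the commutator between $U_{\dt,j}$ and $M^n$ into higher-order remainders'') does not work as stated. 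The obstruction is not a commutator but a measurability problem: the coefficient $\prod_{j=n+1}^{N-1}U_{\dt,j}$ is built from Brownian increments posterior to $t_{n+1}$, hence independent of $\mathcal{F}_{t_{n+1}}$, while $M^n$ depends on the whole past of $X$. In fact each summand $\prod_{j>n}U_{\dt,j}M^n$ \emph{already} has vanishing $\mathcal{F}_{t_n}$-conditional expectation (condition first on $\mathcal{F}_{t_{n+1}}$ and use independence), so your centering subtracts zero and changes nothing; and mean-zero increments are useless here because the partial sums are not adapted to any filtration in which they could be martingales, so neither Doob's inequality nor any ``discrete BDG'' can be invoked. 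Note also the arithmetic slip in your final count: $\dt^{-1}(\dt^{1/2})^{2}=O(1)$; the martingale increments one actually needs are $O(\dt)$, giving quadratic variation $O(N\dt^{2})=O(T\dt)$ and hence $O(\dt^{p})$ after BDG.

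What is missing is the algebraic device of the paper's Step 3. Since the one-step operators are invertible, one factors $\mathcal{U}^{n,l}_{\dt}=\mathcal{U}^{n,1}_{\dt}\,\mathcal{V}^{l-1}_{\dt}$, where $\mathcal{V}^{l}_{\dt}=T_{\dt,0}U^{-1}_{\dt,1}\cdots U^{-1}_{\dt,l-1}T^{-1}_{\dt,l-1}$ is anchored at time $0$ and therefore depends only on increments up to $t_{l}$; the future-dependent prefactor $\mathcal{U}^{n,1}_{\dt}$ acts unitarily on $\espace{L}^2$ (the head factor $T_{\dt,0}$ of $\mathcal{V}$ cancels its $T^{-1}_{\dt,0}$), so it disappears upon taking norms --- this, and not conditioning, is how the anticipating randomness is eliminated. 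One then splits $\mathcal{V}^{l-1}_{\dt}=\mathcal{V}^{l-2}_{\dt}+\bigl(\mathcal{V}^{l-1}_{\dt}-\mathcal{V}^{l-2}_{\dt}\bigr)$: the coefficient $\mathcal{V}^{l-2}_{\dt}$ is $\mathcal{F}_{t_{l-1}}$-measurable, so the corresponding sum is a genuine martingale transform to which Burkholder--Davis--Gundy applies (estimate \eqref{borne3}), while the difference is $O(\sqrt{\dt})$ as an operator from $\espace{H}^1$ to $\espace{L}^2$ (Lemma \ref{localerror}), with all moments bounded uniformly in $n$ through the random Fourier-multiplier estimates of Lemma \ref{continueL2} (estimates \eqref{borne1}--\eqref{borne2}). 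Without this factorization through inverse propagators and the two accompanying operator lemmas, your martingale step cannot be closed, so the proposal as written does not prove \eqref{strongorder}.
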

It may be surprising to require so much regularity on the initial data to prove 
a $L^p(\Omega)$ order for a linear equation. Usually, the order is obtained
using 
the explicit expression of the group $S(t)$, solution of the free Schr\"odinger 
equation (that is $\gamma=0$ in Equation \eqref{linearmanakovlimite}), and the
mild form of the It\^o equation. In our case, we cannot proceed similarly 
because of the semi-implicit discretization of the noise and the presence 
of a differential operator in this term.
\begin{proof}[Proof of Proposition \ref{linearstabilite}]
 Without loss of generality, we assume that $m=0$. The proof is divided into the
following steps.
\begin{enumerate} 
 \item Firstly, we evaluate the growth of the solution of the continuous
equation 
 \eqref{linearmanakovlimite}. More precisely, we denote by $\widetilde{e}^{ \,
n}(s)$ 
 the difference $\widetilde{e}^{ \, n}(s) = X(s) -  \vecXexact{n}{}$, for all 
 $s \in [t_n, t_{n+1}]$ and we give an estimate of it in the space 
 $L^{2p}\left(\Omega, L^{\infty}\left(0, T; \espace{L}^2\right)  \right) $.
\item Secondly, we write a discrete Duhamel equation for the global error 
$e^n = \vecX{n}{} -\vecXexact{n}{} $, where the Itô formulation of equation 
\eqref{linearmanakovlimite} is used.
\item The expression of the global error contains terms that are not martingales
and hence martingales inequalities cannot be applied straightforwardly. 
Therefore, we separate the adapted part to the non adapted one introducing 
a discrete random propagator $\mathcal{V}^{l}_{\dt}$. The adapted 
part is estimated thanks to the usual martingale inequalities, while a bound 
on the non-adapted part is obtained estimating the difference between 
$\mathcal{V}^{l}_{\dt}$ and the discrete random propagator appearing in 
the expression of the global error.
\end{enumerate}
\paragraph{Step 1.} The next lemma gives an estimate of the growth of the 
solution $\vecX{}{}(s)$ of \eqref{linearmanakovlimite} starting at
$\vecXexact{n}{}$. 
\begin{lemma}\label{strong_estimate_sol}
 For any $p \geqslant 1$, if $\vecX{}{0} \in \espace{H}^{1}$ then
\[ 
\espace{E}\left(\sup_{t_n \leqslant s \leqslant t_{n+1}}
\norm{\widetilde{e}^{ \, n}(s)}{\espace{L}^2}^{2p} \right) \leqslant 
C_p(\gamma)  \norm{X_0}{\espace{H}^{1}}^{2p} \dt^{p}   \qquad \forall n = 0,
\cdots, N_T -1.
\]
\end{lemma}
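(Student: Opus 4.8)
The plan is to control the increment $\widetilde{e}^{\,n}(s)=X(s)-\vecXexact{n}{}$ over the short interval $[t_n,t_{n+1}]$ through the mild (Duhamel) formulation of \eqref{linearmanakovlimite}. With $C_\gamma=i+\tfrac{3\gamma}{2}$ as in \eqref{stochasticmanakovito}, let $S_\gamma(t)=e^{tC_\gamma\partial_x^2}$ be the semigroup generated by $C_\gamma\partial_x^2$; its Fourier multiplier $e^{-tC_\gamma|\xi|^2}$ has modulus $e^{-\frac{3\gamma}{2}t|\xi|^2}\leqslant1$, so $S_\gamma(t)$ is a contraction on $\espace{L}^2$ and on every $\espace{H}^s$. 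Writing \eqref{linearmanakovlimite} in the It\^o form \eqref{stochasticmanakovito} with $F\equiv0$ and passing to the mild formulation then gives
\[
\widetilde{e}^{\,n}(s)=\big(S_\gamma(s-t_n)-\Id\big)\vecXexact{n}{}-\sqrt{\gamma}\sum_{k=1}^3\int_{t_n}^s S_\gamma(s-u)\,\sigma_k\partial_x X(u)\,dW_k(u),
\]
and I would bound the $2p$-th moment of the supremum of each piece separately, using $(a+b)^{2p}\leqslant2^{2p-1}(a^{2p}+b^{2p})$. I would first record that, just as in Proposition \ref{linearexistence} for the scheme, the continuous linear flow preserves every Sobolev norm pathwise: in Fourier each mode solves a linear Stratonovich equation with skew-Hermitian coefficients $-i|\xi|^2I_2$ and $-i\xi\sqrt{\gamma}\,\sigma_k$, whence $|\widehat{X}(\xi,u)|=|\widehat{X}(\xi,t_n)|$ a.s. and therefore $\norm{X(u)}{\espace{H}^1}=\norm{X_0}{\espace{H}^1}$ for all $u$. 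This is what yields the clean right-hand side.

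For the first (deterministic) term I would invoke Plancherel together with the elementary bound $|e^{-z}-1|\leqslant2|z|^{1/2}$, valid for $\mathrm{Re}(z)\geqslant0$. Taking $z=(s-t_n)C_\gamma|\xi|^2$ gives, uniformly in $s\in[t_n,t_{n+1}]$ and pathwise in $\omega$,
\[
\norm{\big(S_\gamma(s-t_n)-\Id\big)\vecXexact{n}{}}{\espace{L}^2}^2\leqslant4|C_\gamma|\,(s-t_n)\int_{\espace{R}}|\xi|^2|\widehat{X}(\xi,t_n)|^2\,d\xi\leqslant4|C_\gamma|\,\dt\,\norm{X_0}{\espace{H}^1}^2,
\]
so that raising to the power $p$ already produces the desired $C_p(\gamma)\norm{X_0}{\espace{H}^1}^{2p}\dt^p$. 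This is exactly the place where one trades a derivative (passing from $\espace{H}^1$ to $\espace{L}^2$) for the half power $\dt^{1/2}$ by exploiting the oscillation and smoothing carried by $S_\gamma$; it also explains why the drift $\int_{t_n}^s C_\gamma\partial_x^2 X(u)\,du$ cannot be estimated term by term, which would require the unavailable $\espace{H}^2$ norm.

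For the stochastic convolution I would combine the Burkholder--Davis--Gundy inequality with the maximal inequality for stochastic convolutions (factorization method). Since $S_\gamma$ is a contraction semigroup, for each $k$ one obtains
\[
\espace{E}\Big(\sup_{t_n\leqslant s\leqslant t_{n+1}}\norm{\int_{t_n}^s S_\gamma(s-u)\sigma_k\partial_x X(u)\,dW_k(u)}{\espace{L}^2}^{2p}\Big)\leqslant C_p\,\espace{E}\Big(\int_{t_n}^{t_{n+1}}\norm{\sigma_k\partial_x X(u)}{\espace{L}^2}^2\,du\Big)^p,
\]
and, because $\sigma_k$ is unitary and $\norm{\partial_x X(u)}{\espace{L}^2}\leqslant\norm{X_0}{\espace{H}^1}$, Jensen's inequality over an interval of length $\dt$ turns the right-hand side into $C_p(\gamma)\dt^p\norm{X_0}{\espace{H}^1}^{2p}$; summing the three noises only changes the constant. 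I expect this term to be the genuine obstacle. Unlike the free Schr\"odinger case, $S_\gamma$ is not unitary (it carries the dissipative factor $e^{-\frac{3\gamma}{2}t|\xi|^2}$), so one cannot pull the group out of the stochastic integral to reduce to a plain martingale; and splitting $S_\gamma(s-u)=\Id+(S_\gamma(s-u)-\Id)$ would reintroduce a derivative and hence the forbidden $\espace{H}^2$ norm. This is why the factorization argument, which tolerates the $s$-dependence inside the integrand while requiring only $L^2$ integrands and the contractivity of $S_\gamma$, seems the right tool.
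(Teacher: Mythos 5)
Your proposal follows essentially the same route as the paper's proof: the same mild formulation built on the contraction semigroup generated by $C_{\gamma}\partial_x^2$, the same Fourier-multiplier estimate $\norm{\left(S(t)-\Id\right)f}{\espace{L}^2}\leqslant C(\gamma)\,t^{1/2}\norm{f}{\espace{H}^{1}}$ for the deterministic increment, and the same Burkholder--Davis--Gundy/maximal inequality for the stochastic convolution based on contractivity, all combined with conservation of the $\espace{H}^1$ norm under the linear flow. The differences are only in presentation: you spell out the elementary bound $\abs{e^{-z}-1}\leqslant 2\abs{z}^{1/2}$, justify the pathwise $\espace{H}^1$ conservation via skew-Hermitian Fourier modes, and name the factorization method, where the paper simply invokes the BDG inequality together with the contraction property of $S(t)$.
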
 
\begin{proof}
Writing the It\^o formulation of Equation \eqref{linearmanakovlimite} under its
mild form, we get
\[
 X(t)- \vecXexact{n}{}= \left( S(t-t_n) - \Id\right) \vecXexact{n}{} + 
 i\sqrt{\gamma} \sum_{k=1}^3\int_{t_n}^t S(t-u)\sigma_k \partial_xX(u)  dW_k(u),
\]
where $S(t)$ is the semi-group solution of the linear equation 
$\partial_t X(t) = C_{\gamma} \partial_x^2 X(t)$ with $C_{\gamma} = 
i+ \frac{3\gamma}{2}$. Using the Fourier transform, it can easily be shown that
\[
\norm{\left(S(t)-\Id\right) f}{\espace{L}^2} \leqslant C(\gamma) 
t^{1/2}\norm{f}{\espace{H}^{1}}, \qquad  \forall f \in \espace{H}^{1},
\]
from which we deduce, together with \eqref{normconstant}, that 
\[
\espace{E}\left(\sup_{t_n \leqslant s \leqslant t_{n+1}} 
\norm{ \left( S(s-t_n)-\Id\right)   \vecXexact{n}{} }{\espace{L}^2}^{2p}
\right) 
\leqslant C_p(\gamma)\norm{\vecX{}{0}}{\espace{H}^{1}}^{2p}\dt^{p}.
\]
Moreover, since $X$ is adapted and belongs to $L^{2p}(\Omega, C([0,T],
\espace{L}^2))$, 
we may apply the Burkholder-Davis-Gundy inequality to the stochastic
convolution. 
Using the contraction property of the semigroup $S(t)$ and
\eqref{normconstant}, 
we obtain the estimate
\[\espace{E}\left(\sup_{t_n \leqslant s \leqslant t_{n+1}} \norm{ \sqrt{\gamma} 
\sum_{k=1}^3\int_{t_n}^s S(s-u)\sigma_k \partial_xX(u)  dW_k(u) 
}{\espace{L}^2}^{2p} \right) 
\leqslant C_p\norm{\vecX{}{0}}{\espace{H}^{1}}^{2p} \gamma^p \dt^p.
\]
This concludes the proof of the Lemma.
\end{proof}
\paragraph{Step 2.}
Using the It\^o formulation of Equation \eqref{linearmanakovlimite} and 
evaluating its solution on the time interval $[t_n, t_{n+1}]$, we obtain
\begin{align}
\vecXexact{n+1}{} &= \vecXexact{n}{}+ C_{\gamma}W_0^{n, n+1} \left(
\partial_x^2X \right) 
-\sqrt{\gamma}\sum_{k=1}^3\sigma_k W_k^{n, n+1}\left(
\partial_xX\right)\label{It\^oform}\\
&=  \vecXexact{n}{} - H_{\dt,n}\vecXexact{n+1/2}{} 
+  \epsilon_1^n +  \epsilon_2^n, \label{eqtruncationerror}
\end{align}
where the random variables  $\epsilon_1^n$ and $\epsilon_2^n$ are given by
\bearray\label{errorterms}
&\epsilon_1^n =iW_0^{n, n+1}\left( \partial_x^2X-
\partial_x^2\vecXexact{n+1/2}{}\right) \\
&\epsilon_2^n = \sqrt{\gamma}\sum_{k=1}^3\sigma_k
\left( \partial_x \vecXexact{n+1/2}{}W_k^{n}(1) - W_k^{n,n+1}\left(\partial_xX
\right) \right) 
+ \frac{3 \gamma}{2} W_0^{n,n+1}\left(\partial_x^2X \right). 
\eearray
By induction, we obtain the recursive formula for the global error
\[
 e^{n}=  \mathcal{U}^{n,0}_{\dt} e^0 +   \sum_{l=1}^n  \mathcal{U}^{n,l}_{\dt}
 \left( \epsilon_1^{l-1} +   \epsilon_2^{l-1}\right),
\]
where
\[
\mathcal{U}^{n,l}_{\dt}= \left\{ \begin{array}{lll} &U_{\dt,n-1}\cdots
U_{\dt,1}U_{\dt,0}  
\qquad &\text{for }   l = 0\\[0.25cm]
			&U_{\dt,n-1}\cdots U_{\dt,l}T_{\dt,l-1}^{-1} \qquad
&\text{for }  
			l \in \llbracket 1, n-1 \rrbracket\\[0.25cm]
                        &T_{\dt,n-1}^{-1} \qquad &\text{for } l=n.
                     \end{array}\right.
\]
Let us write the remainder term $\epsilon^{l-1}_1 $, given in
\eqref{errorterms}, 
as the sum of two terms $\epsilon^{l-1}_{1,1}$ and $\epsilon^{l-1}_{1,2}$.
Writing
\begin{equation}\label{exp_corr}
\vecXexact{l-1/2}{} = \vecXexact{l-1}{} +   \frac{1}{2} \left( \vecXexact{l }{} 
-\vecXexact{l-1}{} \right) 
\end{equation}
 and using Equation \eqref{It\^oform}, we obtain the following expressions for 
 $\epsilon_{1,1}^{l-1}$ and $ \epsilon_{1,2}^{l-1}$ 
\be\label{firstterm1}
\epsilon_{1,1}^{l-1} =  i C_{\gamma} \left( W_{0,0}^{l-1,l}\left( \partial^4_xX
\right) 
- \frac{1}{2}W_{0}^{l-1,l}\left( \partial^4_xX \right) \dt \right) 
\ee
and 
\be\label{firstterm2}
\epsilon_{1,2}^{l-1}&=& -     i\sqrt{\gamma}   
\sum_{k=1}^3 \left( W_{k,0}^{l-1,l}\left(\sigma_k \partial^3_xX \right) - 
\frac{1}{2}W_{k}^{l-1,l}\left( \sigma_k\partial^3_xX \right)\dt \right). 
\ee
 We proceed similarly for the term $\epsilon^{l-1}_2$ writing it as a sum of 
 three terms $\epsilon_2^{l-1} =  \epsilon^{l-1}_{2,1} +\epsilon^{l-1}_{2,2}+
 \epsilon^{l-1}_{2,3}$. Using again \eqref{exp_corr} and Equation
\eqref{It\^oform}, 
 the truncation error $\epsilon^{l-1}_{2}$, given in Expression
\eqref{errorterms}, 
 can now be expressed thanks to
\bearray\label{secondterm}
&\epsilon_{2,1}^{l-1} = - \sqrt{\gamma}\sum_{k=1}^3 W_k^{l-1,l}
\left(\sigma_k \partial_x \widetilde{e}^{\, l-1} \right)\\[0.2cm]
&\epsilon_{2,2}^{l-1} =\frac{3 \gamma}{2}W_0^{l-1,l}\left( \partial_x^2 X
\right) 
-\frac{ \gamma}{2}\sum_{j,k=1}^3\sigma_j\sigma_k W_j^{l-1,l}\left( \partial_x^2
X \right)
W_k^{l-1}(1) \\[0.2cm]
&\epsilon_{2,3}^{l-1} = \frac{\sqrt{\gamma} }{2} C_{\gamma}\sum_{k=1}^3
\sigma_k 
W_0^{l-1,l}\left( \partial_x^3 X \right) W_k^{l-1}(1).
\eearray
\paragraph{Step 3.} Since $\mathcal{U}^{n,l}_{\dt}$  depends on the Brownian 
increments after time $t_{l-1}$, it is not $\mathcal{F}_{t_{l-1}}$ adapted and
\[
 \mathcal{U}^{n,l}_{\dt}\sum_{k=1}^3 W_{k,0}^{l-1,l}\left(\sigma_k \partial^3_xX
\right)
 \neq \sum_{k=1}^3W_{k,0}^{l-1,l}\left(\mathcal{U}^{n,l}_{\dt}\sigma_k
\partial^3_xX \right).
\]
Therefore, we introduce the following process
 \[
  \mathcal{V}^{l}_{\dt} = \left\{ \begin{array}{lll} & \Id  \qquad &\text{for } 
 l = -1,0\\[0.25cm]
& T_{\dt,0}U^{-1}_{\dt,1} T^{-1}_{\dt,1}\qquad &\text{for }   l =1 \\[0.25cm]
			& T_{\dt,0}U^{-1}_{\dt,1} \cdots U^{-1}_{\dt,l-1}
T^{-1}_{\dt,l-1}
			\qquad &\text{for }   l \in \llbracket 2, n-1
\rrbracket,
                     \end{array}\right.
\] 
and separating the adapted part from the non-adapted part, we write 
\[\mathcal{U}^{n,l}_{\dt} = \mathcal{U}^{n,1}_{\dt}\left(
\mathcal{V}^{l-1}_{\dt} 
-\mathcal{V}^{l-2}_{\dt}+\mathcal{V}^{l-2}_{\dt} \right), 
\qquad \forall \ 1 \leqslant l \leqslant n. \] 
Now, using the unitarity property of $\mathcal{U}^{n,1}_{\dt} $ in
$\espace{L}^2$, 
we may write, for $q=1,2$, 
\be
 \lefteqn{\espace{E}\left( \max_{n \in \llbracket1,
N\rrbracket}\norm{\sum_{l=1}^n  \mathcal{U}^{n,l}_{\dt}  \epsilon_q^{l-1}  
}{\espace{L}^2}^{2p} \right)}\label{boundglobal}\\
&& \leqslant  C_p\espace{E}\left(  \max_{n \in \llbracket1,
N\rrbracket}\norm{\sum_{l=1}^n  \left( \mathcal{V}^{l-1}_{\dt}
-\mathcal{V}^{l-2}_{\dt}  \right) \epsilon_q^{l-1}   }{\espace{L}^2}^{2p}
\right)+
C_p\espace{E}\left( \max_{n \in \llbracket1, N\rrbracket}\norm{\sum_{l=1}^n 
\mathcal{V}^{l-2}_{\dt}  \epsilon_q^{l-1}   }{\espace{L}^2}^{2p} \right). \nonumber
\ee
Since $\mathcal{V}^{l-2}_{\dt}$ is $\mathcal{F}_{t_{l-1}}$ measurable, we are
allowed to use the Burkholder-Davis-Gundy inequality to estimate the second
term. The next Lemma, whose proof is postponed to section \ref{app}, gives
useful estimates to bound \eqref{boundglobal}.
\begin{lemma}\label{errorestimate}
For all $\left( f^l \right)_{l \in \llbracket 1, N \rrbracket} \in \left(
\espace{H}^1\left( \espace{R}\right)\right)^N  $ and for all $p>1$, there exists
a positive constant $C(\gamma, T, p)$, independent of $N$, such that
\begin{align}
\espace{E}\left(\max_{n  \in  \llbracket 1, N \rrbracket} \norm{\sum_{l=1}^n  
\left( \mathcal{V}^{l-1}_{\dt} - \mathcal{V}^{l-2}_{\dt}\right) f^{l}
}{\espace{L}^2}^{2p} \right) \leqslant C(\gamma, T, p) N^p \espace{E}\left(
\max_{n \in  \llbracket 1, N
\rrbracket}\norm{f^n}{\espace{H}^1}^{4p}\right)^{1/2}. \label{borne1}
\end{align}
Moreover, if for any $l \in \llbracket 1, N \rrbracket$, $f^{l}  = 
\epsilon_q^{l-1}, q=1,2$, then there exist two positive constants $C_1$ and $C_2$,
  independent of $N$, such that
\begin{align}
\espace{E}\left(\max_{n  \in  \llbracket 1, N \rrbracket} \norm{\sum_{l=1}^n  
\left( \mathcal{V}^{l-1}_{\dt} - \mathcal{V}^{l-2}_{\dt}\right) f^{l}
}{\espace{L}^2}^{2p} \right) \leqslant C_1(\gamma, T, p,q,
\norm{X_0}{\espace{H}^5})\dt^p. \label{borne2}
\end{align}
and 
\begin{align}
\espace{E}\left(\max_{n  \in  \llbracket 1, N \rrbracket} \norm{\sum_{l=1}^n   
\mathcal{V}^{l-2}_{\dt}  f^{l} }{\espace{L}^2}^{2p} \right) \leqslant
C_2(\gamma, T, p,q, \norm{X_0}{\espace{H}^4})\dt^p.\label{borne3}
\end{align}
\end{lemma}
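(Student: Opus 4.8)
My plan is to reduce all three inequalities to a single pathwise estimate on the one--step increment of the random propagator, and then to treat \eqref{borne1} by brute summation while treating \eqref{borne2}--\eqref{borne3} by exploiting that $\mathcal{V}^{l-2}_{\dt}$ and $\mathcal{V}^{l-1}_{\dt}-\mathcal{V}^{l-2}_{\dt}$ are $\mathcal{F}_{t_{l-1}}$--measurable. First I would extract the algebraic structure. Writing $\mathcal{V}^{l}_{\dt}=Q_l\,T_{\dt,l-1}^{-1}$ with $Q_l=T_{\dt,0}U_{\dt,1}^{-1}\cdots U_{\dt,l-1}^{-1}$, and using the identity $U_{\dt,j}^{-1}T_{\dt,j}^{-1}=(\Id-\tfrac12 H_{\dt,j})^{-1}$, which is immediate from \eqref{OpEx1}--\eqref{OpEx2}, a telescoping computation reduces the increment to
\[
\mathcal{V}^{l-1}_{\dt}-\mathcal{V}^{l-2}_{\dt}=Q_{l-2}\Big((\Id-\tfrac12 H_{\dt,l-2})^{-1}-(\Id+\tfrac12 H_{\dt,l-3})^{-1}\Big).
\]
From \eqref{symbol}, $H_{\dt,n}$ has symbol $i\dt\abs{\xi}^2 I_2+i\sqrt{\gamma\dt}\,\xi M_n(\omega)$ with $M_n$ Hermitian and $\normm{M_n}{2}\leqslant C\abs{\chi^n}$, each $U_{\dt,j}^{-1}$ is a matrix Fourier multiplier that is \emph{exactly} unitary on every $\espace{H}^m$, and the top--order symbols of $T_{\dt,0}$ and of $(\Id\pm\tfrac12 H_{\dt,\cdot})^{-1}$ are scalar multiples of $I_2$ of orders $+2$ and $-2$ in $\xi$. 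Using that these scalar leading parts commute through the non--commuting unitary middle factors, I would prove the two pathwise bounds
\[
\norm{\mathcal{V}^{l-2}_{\dt}g}{\espace{L}^2}\leqslant C(\gamma,\omega)\norm{g}{\espace{L}^2},\qquad \norm{(\mathcal{V}^{l-1}_{\dt}-\mathcal{V}^{l-2}_{\dt})g}{\espace{L}^2}\leqslant C\sqrt{\dt}\,A_l(\omega)\norm{g}{\espace{H}^1},
\]
where the constants depend on $\omega$ only through endpoint increments $\chi^0,\chi^{l-2},\chi^{l-3}$, so $A_l$ has moments of every order bounded uniformly in $l$ and $\dt$. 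The crucial point, and the main obstacle, is that $T_{\dt,0}$ loses two derivatives while the resolvent difference gains two, so after cancellation of the scalar symbols the composition is only $O(1)$ at frequency $\abs{\xi}\sim\dt^{-1/2}$; measured from $\espace{H}^1$ into $\espace{L}^2$ this $O(1)$ symbol is divided by $\abs{\xi}\sim\dt^{-1/2}$, which is exactly what produces the gain $\sqrt{\dt}$. Making this frequency bookkeeping rigorous for the random Pauli matrices is the heart of the argument.

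Granting these operator bounds, \eqref{borne1} follows by a crude summation that uses no probabilistic structure of the $f^l$: I would bound $\norm{\sum_{l=1}^n(\mathcal{V}^{l-1}_{\dt}-\mathcal{V}^{l-2}_{\dt})f^l}{\espace{L}^2}$ by $\sum_{l=1}^N C\sqrt{\dt}A_l\norm{f^l}{\espace{H}^1}$, apply the discrete H\"older inequality to gain a factor $N^{2p-1}$, factor out $\max_l\norm{f^l}{\espace{H}^1}^{2p}$, take expectations and split by Cauchy--Schwarz into $\espace{E}(\max_l\norm{f^l}{\espace{H}^1}^{4p})^{1/2}$ times a bounded moment of $\sum_l A_l^{2p}$. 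Collecting powers gives $N^{2p}\dt^{p}=N^{p}(N\dt)^{p}\leqslant T^{p}N^{p}$, which is the announced constant times $N^p$.

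The sharp bounds \eqref{borne2}--\eqref{borne3} cannot be deduced from \eqref{borne1}: inserting $f^l=\epsilon_q^{l-1}$ there would only yield order $\dt^{p-1/2}$, since $\epsilon_{2,1}^{l-1}$ and $\epsilon_{2,2}^{l-1}$ are merely $O(\dt)$ in $L^{2p}(\Omega)$ and the factor $N^p$ is too expensive. Instead I would expand each $\epsilon_q^{l-1}$ into the single and iterated integrals of \eqref{firstterm1}--\eqref{secondterm} and split it into a martingale part (terms whose outermost integrator is some $dW_k$, $k=1,2,3$) and a finite--variation part (the $dW_0=du$ terms, already $O(\dt^2)$ for $\epsilon_{1,1}^{l-1}$). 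Since $\mathcal{V}^{l-2}_{\dt}$ and $\mathcal{V}^{l-1}_{\dt}-\mathcal{V}^{l-2}_{\dt}$ are $\mathcal{F}_{t_{l-1}}$--measurable and constant on $[t_{l-1},t_{l}]$, they commute past these integrals, so for the martingale part the sum over $l$ is a genuine $\espace{L}^2$--valued martingale and the Burkholder--Davis--Gundy inequality reduces the estimate to its quadratic variation. For \eqref{borne2} I would then insert the $O(\sqrt{\dt}A_l)$ operator bound, one extra factor $\sqrt{\dt}$ from the length of $[t_{l-1},t_l]$, and the $\espace{H}^1$--size of the integrands (for $\epsilon_{2,1}^{l-1}$ the factor $\partial_x\widetilde{e}^{\,l-1}$ is itself $O(\sqrt{\dt})$ in $\espace{H}^1$ by the higher--regularity version of Lemma \ref{strong_estimate_sol}); this produces $\dt^{2}$ per step, and summing $N$ steps with $N\dt\leqslant T$ gives order $\dt^{2p}\leqslant\dt^{p}$. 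The finite--variation part is controlled directly by Minkowski's inequality, its higher order in $\dt$ absorbing the summation. The bound \eqref{borne3} is identical, except that $\mathcal{V}^{l-2}_{\dt}$ is $O(1)$ rather than $O(\sqrt{\dt})$; the missing half power of $\dt$ is then supplied by the martingale gain of Burkholder--Davis--Gundy, again yielding $\dt^{p}$. The one extra $\partial_x$ carried by the difference operator in \eqref{borne2}, on top of the $\partial_x^4 X$ already present in $\epsilon_{1,1}^{l-1}$, is precisely what forces $X_0\in\espace{H}^5$ in \eqref{borne2} against $X_0\in\espace{H}^4$ in \eqref{borne3}.
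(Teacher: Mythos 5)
Your proposal is correct and follows essentially the same route as the paper: the two pathwise operator bounds you posit are exactly the paper's Lemmas \ref{continueL2} and \ref{localerror} (proved there by uniform-in-$\xi$ estimates on the matrix Fourier multipliers, using unitarity of the middle factors and an explicit lower bound on the determinant of the symbol), your crude summation plus H\"older and Cauchy--Schwarz for \eqref{borne1} is the paper's argument verbatim, and your use of the $\mathcal{F}_{t_{l-1}}$--measurability of $\mathcal{V}^{l-2}_{\dt}$ (and of the increment $\mathcal{V}^{l-1}_{\dt}-\mathcal{V}^{l-2}_{\dt}$) to commute the operators inside the stochastic integrals, turn the sums into genuine martingales, apply Burkholder--Davis--Gundy, and invoke the higher-regularity version of Lemma \ref{strong_estimate_sol} for $\widetilde{e}^{\,l-1}$ is precisely the paper's treatment of \eqref{borne2}--\eqref{borne3}, including the $\espace{H}^5$ versus $\espace{H}^4$ bookkeeping. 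The only divergences are cosmetic --- the paper converts the $W_{k,0}$-type terms to martingale form by integration by parts where you sum them crudely (both work), and your observation that \eqref{borne2} cannot be obtained by inserting $f^l=\epsilon_q^{l-1}$ into the statement of \eqref{borne1} (losing $\dt^{1/2}$) is a fair sharpening of the paper's terse remark that \eqref{borne2} is ``deduced from'' \eqref{borne1}; just note that for $\epsilon_{2,2}^{l-1}$ the individual $du$-terms are only $O(\dt)$ and it is the exact cancellation of the compensators produced by your It\^o product expansion that makes your ``finite-variation part is higher order'' claim true.
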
 
An estimate on \eqref{boundglobal} is easily obtained using \eqref{borne2} and \eqref{borne3} 
and we conclude the proof of Proposition \ref{linearstabilite}.
\end{proof}
\section{Probability and almost sure order for the Crank-Nicolson scheme
\eqref{CNApprox}}\label{AS}
This section is organized in two parts. In a first part, we will use, as is classical,
a cut-off argument on the nonlinear term which is not Lipschitz. We first define
a cut-off scheme, as an approximation of a continuous cut-off equation, 
and prove existence and uniqueness of a
global solution to this scheme.  The cut-off we
consider here for the scheme is of the same form as the one considered in
\cite{[bouardschema], [bouardorder]}. We also prove that the strong mean-square
rate of convergence of this approximation to the continuous cut-off equation is
$1/2$. This estimate is important in order to remove the cut-off.
In a second part, we construct a discrete solution to the Crank
Nicolson scheme \eqref{CNApprox} and define a discrete blow-up time. Using the
time order for the cut off scheme, we obtain a probability order and a.s. order
for the discrete scheme \eqref{CNApprox}, as is done in \cite{[bouardorder],
[printems]}.
\subsection{The lipschitz case}
 Let us denote by $U(t,s), t \geqslant s, t,s \in \espace{R}_+$ the random
unitary propagator defined as the unique solution of the linear equation 
\cite{[gazeau], [gazeauPHD]}
\[
idX(t)+  \pds{2}{x}{X(t)}dt+i\sqrt{\gamma}\sum_{k=1}^3
\sigma_k\pds{}{x}{X(t)}\circ dW_k(t)=0, \quad t \geqslant 0, x \in \espace{R}.
\]
Then, Equation \eqref{stochasticmanakov} with initial condition $X_0=v$,  can
be written in its mild form
 \begin{equation}
  X(t)=U(t,0)v+i \int_0^t U(t,s)\nl{}{X(s)}ds.
\end{equation}
We introduce a cut-off function $\Theta \in
C_c^{\infty}\left(\espace{R}\right)$, $\Theta \geqslant 0$ satisfying
$\Theta\left(x\right)=1$ for $x \in [0,1]$ and $\Theta\left(x\right)=0$ for $x
\geqslant 2$. We then define $\Theta_{R}\left(.\right)=\Theta\left(
\norm{.}{\espace{H}^1}/R\right)$ for any $R \in \espace{N}^*$. We set $ G\left(
\vecX{}{R}(s)\right) = \Theta^2_{R}\left( \vecX{}{R}(s)\right)\nl{}{
\vecX{}{R}(s)}$ and introduce the cut-off equation
\begin{equation}\label{duhamellimitetronquee}
  \vecX{}{R}(t)=U(t,0)v+i \int_0^t U(t,s)G\left( \vecX{}{R}(s)\right) ds,
\end{equation}
 which is the mild formulation of the equation
\begin{equation}\label{stochasticmanakovtronquee}
id\vecX{}{R}(t)+\left(\pds{2}{x}{\vecX{}{R}(t)} + G\left(
\vecX{}{R}(t)\right)\right) dt+i\sqrt{\gamma}\sum_{k=1}^3
\sigma_k\pds{}{x}{\vecX{}{R}(t)}\circ dW_k(t)=0.
\end{equation}
\subsubsection{Existence of a discrete solution}
Let us consider a semidiscrete scheme of equation
\eqref{stochasticmanakovtronquee}
\begin{equation}\label{CNtronquee}
\vecX{n+1}{R}= \vecX{n}{R} - H_{\dt,n}\vecX{n+1/2}{R} +i\dt G\left(\vecX{n
}{R},\vecX{n+1}{R} \right) 
\end{equation}
where   $G\left(\vecX{n }{R},\vecX{n+1}{R} \right)=\Theta_{\vecX{}{R}}^{n,n+1}
F\left(\vecX{n }{R},\vecX{n+1}{R} \right)$ and $\Theta_{\vecX{}{R}}^{n,n+1}    =
\Theta_{R}\left(\vecX{n}{R}\right)\Theta_{R}\left(\vecX{n+1}{R}\right)$. Such a
cut-off is used so that the discretization of the nonlinear term is consistent
with the continuous equation \eqref{stochasticmanakovtronquee}. Recall that the
nonlinear function $F$ is given by
\[
 F\left(\vecX{n }{R},\vecX{n+1 }{R}\right) =\frac{1}{2} \left(\abs{\vecX{n
}{R}}^2 +\abs{\vecX{n+1 }{R}}^2 \right)\vecX{n+1/2}{R}.
\]
 Now, we state in the next Proposition an existence and convergence result for
the scheme \eqref{CNtronquee}. This will be useful to define a solution, up to
the blow-up time, for \eqref{CNApprox} and a rate of convergence in a sense that
should be specified.
\begin{proposition}\label{existenceCN}
Let $\vecX{}{0} \in \espace{H}^1$ and $\dt >0$ fixed. Then there exists a unique adapted discrete
solution $\vecX{N}{R} = \left( \vecX{n}{R} \right)_{n = 0, \cdots, N_T}$ to
\eqref{CNtronquee} that belongs to $L^{\infty}\left(0, T; \espace{H}^1\right)$.
Furthermore for any $n \in \espace{N}$ such that $n  \leqslant N_T$, the
$\espace{L}^2$ norm is almost surely preserved i.e
$\norm{\vecX{n}{R}}{\espace{L}^2}=\norm{\vecX{ }{0}}{\espace{L}^2}$. 
\end{proposition}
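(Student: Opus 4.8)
The plan is to solve the implicit scheme \eqref{CNtronquee} one step at a time by a fixed point argument, and then to propagate existence, adaptedness and the conservation law by induction on $n$. Fix $\dt>0$ and assume $\vecX{n}{R}$ has been constructed, $\mathcal{F}_{t_n}$-measurable with values in $\espace{H}^1$. Recalling that $T_{\dt,n}=\Id+\frac{1}{2}H_{\dt,n}$ is invertible from $\espace{H}^2$ onto $\espace{L}^2$ and that $U_{\dt,n}=T_{\dt,n}^{-1}(\Id-\frac{1}{2}H_{\dt,n})$, as established in Proposition \ref{linearexistence}, I would first rewrite \eqref{CNtronquee} in the equivalent fixed point form
\[
\vecX{n+1}{R}=\Phi\left(\vecX{n+1}{R}\right),\qquad \Phi(Y)=U_{\dt,n}\vecX{n}{R}+i\dt\,T_{\dt,n}^{-1}G\left(\vecX{n}{R},Y\right).
\]
The two structural facts I would extract at the outset are that $H_{\dt,n}$ is skew-adjoint (since $iH_{\dt,n}$ is selfadjoint) and that it has constant coefficients in $x$, hence commutes with $\partial_x$. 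From the first I get $\norm{T_{\dt,n}^{-1}}{\mathcal{L}(\espace{L}^2)}\leqslant 1$, and from the two together the contraction bound $\norm{T_{\dt,n}^{-1}}{\mathcal{L}(\espace{H}^s)}\leqslant 1$ together with the $\espace{H}^s$-unitarity of $U_{\dt,n}$, for every $s$.

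The second ingredient is a careful study of the cut-off nonlinearity. In dimension one $\espace{H}^1(\espace{R})$ is a Banach algebra continuously embedded in $L^\infty$, so $F\left(\vecX{n}{R},\cdot\right)$ is locally Lipschitz from $\espace{H}^1$ into $\espace{H}^1$. The factor $\Theta_R(Y)$ vanishes as soon as $\norm{Y}{\espace{H}^1}\geqslant 2R$, which confines $Y$ to a fixed $\espace{H}^1$-ball wherever $G$ is nonzero; combined with the Lipschitz continuity of $Y\mapsto\Theta_R(Y)$ (because $\Theta$ is smooth and $Y\mapsto\norm{Y}{\espace{H}^1}$ is $1$-Lipschitz), this yields that $G\left(\vecX{n}{R},\cdot\right)$ is globally bounded, $\norm{G(\vecX{n}{R},Y)}{\espace{H}^1}\leqslant C_R$, and globally Lipschitz on $\espace{H}^1$, with a constant $L_R$ depending only on $R$ and $\norm{\vecX{n}{R}}{\espace{H}^1}$. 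Together with $\norm{T_{\dt,n}^{-1}}{\mathcal{L}(\espace{H}^1)}\leqslant 1$ this gives $\norm{\Phi(Y_1)-\Phi(Y_2)}{\espace{H}^1}\leqslant \dt\,L_R\norm{Y_1-Y_2}{\espace{H}^1}$, so that $\Phi$ is a contraction on $\espace{H}^1$ and the Banach fixed point theorem produces a unique $\vecX{n+1}{R}\in\espace{H}^1$. Adaptedness is then automatic: the Picard iterates started from $U_{\dt,n}\vecX{n}{R}$ are all $\mathcal{F}_{t_{n+1}}$-measurable, hence so is their $\espace{H}^1$-limit.

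Mass conservation I would obtain exactly as in Proposition \ref{linearexistence}, by taking the $\espace{L}^2$ scalar product of \eqref{CNtronquee} with $\vecX{n+1/2}{R}$: the increment term produces $\frac{1}{2}\left(\norm{\vecX{n+1}{R}}{\espace{L}^2}^2-\norm{\vecX{n}{R}}{\espace{L}^2}^2\right)$; the linear term $\left(H_{\dt,n}\vecX{n+1/2}{R},\vecX{n+1/2}{R}\right)_{\espace{L}^2}$ vanishes by skew-adjointness; and the nonlinear term vanishes because $G\left(\vecX{n}{R},\vecX{n+1}{R}\right)$ is a real nonnegative scalar multiple of $\vecX{n+1/2}{R}$, so that $i\left(G,\vecX{n+1/2}{R}\right)_{\espace{L}^2}$ is purely imaginary. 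This gives $\norm{\vecX{n+1}{R}}{\espace{L}^2}=\norm{\vecX{n}{R}}{\espace{L}^2}$. To place the whole sequence in $L^\infty\left(0,T;\espace{H}^1\right)$ I would use the boundedness of $G$: since $\norm{\vecX{n+1}{R}}{\espace{H}^1}\leqslant \norm{\vecX{n}{R}}{\espace{H}^1}+\dt\,C_R$, the $\espace{H}^1$ norm grows by at most $C_R\dt$ per step and stays bounded by $\norm{\vecX{}{0}}{\espace{H}^1}+C_R T$ over the $N_T$ steps. Iterating the one-step construction from $n=0$ to $N_T-1$ then yields the full adapted solution with all the stated properties.

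The main obstacle is the contraction estimate for $\Phi$ on $\espace{H}^1$, specifically the uniform-in-$Y$ control of the cut-off nonlinearity: one must verify that differentiating $\Theta_R(Y)$ and the cubic term $F$ only ever encounters $Y$ inside a fixed $\espace{H}^1$-ball, so that $C_R$ and $L_R$ are genuinely independent of $Y$, and the delicate bookkeeping lies in bounding $\norm{G(\vecX{n}{R},Y_1)-G(\vecX{n}{R},Y_2)}{\espace{H}^1}$ by separating the contributions of the two cut-off factors from that of $F$ and using the $\espace{H}^1$-algebra property throughout. If one insists on an arbitrary $\dt>0$ rather than $\dt$ small relative to $1/L_R$, the a priori mass bound above is precisely what is needed to supplement the fixed point step with a Galerkin–Brouwer compactness argument for existence, uniqueness then following from the same energy estimate applied to the difference of two solutions.
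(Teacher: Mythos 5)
Your core argument is essentially the paper's own: both proofs rest on the invertibility of $T_{\dt,n}$ and unitarity of $U_{\dt,n}$ (Proposition \ref{linearexistence}), on the global Lipschitz property of the cut-off nonlinearity $G$ (Lemma \ref{Lipschitz}), and on a Picard fixed point, and both obtain mass conservation by pairing the scheme with $\vecX{n+1/2}{R}$, using the skew-adjointness of $H_{\dt,n}$ and the fact that $G\left(\vecX{n}{R},\vecX{n+1}{R}\right)$ is a real scalar multiple of $\vecX{n+1/2}{R}$. The only structural difference is that the paper runs the fixed point through the discrete Duhamel formulation \eqref{discreteduhamelCN} at the level of the whole sequence, whereas you solve one implicit step at a time; these are interchangeable.

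The one genuine problem is your closing claim about arbitrary $\dt$. Your one-step contraction needs $\dt L_R<1$, i.e.\ $\dt \lesssim R^{-2}$. This is not a defect relative to the paper: its fixed point carries exactly the same hidden restriction (the Lipschitz constant of $G$ is $CR^{2}$), and the author concedes it in Section \ref{AS}, where Lemma \ref{stabilityball} requires $\dt \leqslant C_2R_0^{-2}$ and the surrounding text states that the admissible time step in Proposition \ref{existenceCN} depends on the cut-off radius $R$. What does not work is your proposed remedy for large $\dt$: a Galerkin--Brouwer argument may indeed give \emph{existence} (the mass bound supplies the a priori estimate), but \emph{uniqueness} cannot ``follow from the same energy estimate applied to the difference of two solutions.'' If $Z_1$ and $Z_2$ both solve the step with the same $\vecX{n}{R}$, their difference satisfies
\[
Z_1-Z_2 = i\dt\, T_{\dt,n}^{-1}\left( G\left(\vecX{n}{R},Z_1\right)-G\left(\vecX{n}{R},Z_2\right)\right),
\]
and the nonlinear contributions do not cancel under any pairing; the only estimate available is $\norm{Z_1-Z_2}{\espace{H}^1}\leqslant \dt L_R \norm{Z_1-Z_2}{\espace{H}^1}$, which is vacuous unless $\dt L_R<1$. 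The energy method thus reproduces the smallness condition rather than circumventing it, and for an implicit Crank--Nicolson step with large $\dt$ uniqueness may genuinely fail. You should therefore either keep the implicit restriction $\dt \lesssim R^{-2}$ (as the paper effectively does) or weaken the last sentence to an existence-only statement.
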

To prove this result, we will use the next Lemma whose proof relies on the same
arguments as in \cite{[bouard], [gazeau]}.
\begin{lemma}\label{Lipschitz}
The function $G$ is a globally lipschitz continuous function from
$L^{\infty}\left(0, T; \espace{H}^1\times\espace{H}^1 \right)$ into
$L^{\infty}\left(0, T; \espace{H}^1\right)$ i.e. there exists a positive
constant $C$ independent of $N$ such that for any $\vecY{N}{R}$ and
$\vecX{N}{R}$ belonging to $L^{\infty}\left(0, T; \espace{H}^1\right)$ 
 \[
 \underset{n\dt \leqslant T}{\sup_{n \in \espace{N}^*}}
\norm{G\left(\vecX{n-1}{R},\vecX{n}{R}\right) -
G\left(\vecY{n-1}{R},\vecY{n}{R}\right)}{  \espace{H}^1  }\leqslant
CR^{2}\underset{n\dt \leqslant T}{\sup_{n \in
\espace{N}^*}}\norm{\vecX{n}{R}-\vecY{n}{R}}{ \espace{H}^1 } \quad \text{almost
surely.}
 \]
\end{lemma}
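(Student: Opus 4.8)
The plan is to reduce the statement to three elementary facts and then combine them. First, recall that in space dimension one $\espace{H}^1(\espace{R})$ is continuously embedded in $L^\infty$ and is a Banach algebra, so there is a constant $C$ with $\norm{fg}{\espace{H}^1}\leqslant C\,\norm{f}{\espace{H}^1}\norm{g}{\espace{H}^1}$ (componentwise for $\espace{C}^2$-valued functions). Second, the scalar cut-off $\Theta_R(\cdot)=\Theta\!\left(\norm{\cdot}{\espace{H}^1}/R\right)$ takes values in $[0,1]$, vanishes as soon as its argument leaves the ball $\{\norm{\cdot}{\espace{H}^1}\leqslant 2R\}$, and is Lipschitz: since $\Theta\in C_c^\infty$ and $\abs{\norm{u}{\espace{H}^1}-\norm{v}{\espace{H}^1}}\leqslant\norm{u-v}{\espace{H}^1}$, one has $\abs{\Theta_R(u)-\Theta_R(v)}\leqslant \frac{\norm{\Theta'}{\infty}}{R}\norm{u-v}{\espace{H}^1}$. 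Writing $\eta(u,v)=\Theta_R(u)\Theta_R(v)$ and using $0\leqslant\Theta_R\leqslant 1$, this gives $\abs{\eta(u,v)-\eta(\tilde u,\tilde v)}\leqslant \frac{C}{R}\left(\norm{u-\tilde u}{\espace{H}^1}+\norm{v-\tilde v}{\espace{H}^1}\right)$, and $\eta$ is supported in $\{\norm{u}{\espace{H}^1},\norm{v}{\espace{H}^1}\leqslant 2R\}$.

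Third, I would treat the cubic term. Writing $F(u,v)=\frac14\left(\abs{u}^2u+\abs{u}^2v+\abs{v}^2u+\abs{v}^2v\right)$ and telescoping each monomial (e.g. $\abs{u}^2u-\abs{\tilde u}^2\tilde u=\abs{u}^2(u-\tilde u)+(u\bar u-\tilde u\bar{\tilde u})\tilde u$, then $u\bar u-\tilde u\bar{\tilde u}=u(\bar u-\bar{\tilde u})+(u-\tilde u)\bar{\tilde u}$, and similarly for the mixed terms), the algebra property yields $\norm{F(u,v)-F(\tilde u,\tilde v)}{\espace{H}^1}\leqslant C\big(\norm{u}{\espace{H}^1}^2+\norm{v}{\espace{H}^1}^2+\norm{\tilde u}{\espace{H}^1}^2+\norm{\tilde v}{\espace{H}^1}^2\big)\left(\norm{u-\tilde u}{\espace{H}^1}+\norm{v-\tilde v}{\espace{H}^1}\right)$, and in particular $\norm{F(u,v)}{\espace{H}^1}\leqslant C\big(\norm{u}{\espace{H}^1}^3+\norm{v}{\espace{H}^1}^3\big)$. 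When all four arguments lie in the ball of radius $2R$ these become $\norm{F(u,v)-F(\tilde u,\tilde v)}{\espace{H}^1}\leqslant CR^2(\cdots)$ and $\norm{F(u,v)}{\espace{H}^1}\leqslant CR^3$.

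It remains to combine these, working at a fixed $\omega$ and a fixed index $n$; abbreviate $a=(\vecX{n-1}{R},\vecX{n}{R})$, $b=(\vecY{n-1}{R},\vecY{n}{R})$, so that $G=\eta F$. I would argue according to whether the cut-offs vanish. If both $\eta(a)$ and $\eta(b)$ vanish, then $G(a)=G(b)=0$. If both are positive, all four components lie in the ball of radius $2R$, and the splitting $\eta(a)F(a)-\eta(b)F(b)=[\eta(a)-\eta(b)]F(a)+\eta(b)[F(a)-F(b)]$ bounds the difference by $CR^2\left(\norm{\vecX{n-1}{R}-\vecY{n-1}{R}}{\espace{H}^1}+\norm{\vecX{n}{R}-\vecY{n}{R}}{\espace{H}^1}\right)$, the first term using the Lipschitz bound on $\eta$ and $\norm{F(a)}{\espace{H}^1}\leqslant CR^3$, the second using $\eta(b)\leqslant 1$ and the local Lipschitz bound on $F$. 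The delicate configuration is when exactly one cut-off, say $\eta(b)$, vanishes: then $G(b)=0$ and $G(a)-G(b)=\eta(a)F(a)$ with $a$ inside the ball of radius $2R$, so $\norm{F(a)}{\espace{H}^1}\leqslant CR^3$; crucially, since $\eta(b)=0$ one has $\eta(a)=\abs{\eta(a)-\eta(b)}\leqslant \frac{C}{R}\left(\norm{\vecX{n-1}{R}-\vecY{n-1}{R}}{\espace{H}^1}+\norm{\vecX{n}{R}-\vecY{n}{R}}{\espace{H}^1}\right)$, and the gained factor $1/R$ exactly compensates the $R^3$ growth of $F$.

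In every case $\norm{G(a)-G(b)}{\espace{H}^1}\leqslant CR^2\left(\norm{\vecX{n-1}{R}-\vecY{n-1}{R}}{\espace{H}^1}+\norm{\vecX{n}{R}-\vecY{n}{R}}{\espace{H}^1}\right)\leqslant 2CR^2\sup_{m}\norm{\vecX{m}{R}-\vecY{m}{R}}{\espace{H}^1}$, and taking the supremum over $n$ with $n\dt\leqslant T$ gives the claim. I expect the main obstacle to be precisely this transition region: one cannot control $F$ at the point where $\eta$ is large by its value at the point where $\eta$ vanishes, so the estimate must keep $F$ evaluated where the surviving cut-off is supported and extract the compensating power of $R$ from the Lipschitz constant of $\Theta_R$, which is what turns the naive $R^3$ bound into the announced $R^2$.
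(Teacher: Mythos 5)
Your proof is correct, and it is essentially the argument the paper itself invokes: the paper does not reproduce a proof of this lemma but defers to \cite{[bouard], [gazeau]}, where the same ingredients appear — the $\espace{H}^1(\espace{R})$ Banach-algebra property, the $\norm{\Theta'}{L^\infty}/R$ Lipschitz bound on the cut-off, and the case analysis on whether the cut-offs vanish so that the factor $1/R$ compensates the $R^3$ growth of the cubic term. Your explicit treatment of the transition region (one cut-off vanishing) correctly identifies the only delicate point and resolves it as in those references.
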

\begin{proof}[Proof of Proposition \ref{existenceCN}]
Assume that $\vecX{}{0} \in \espace{H}^1$, the integral formulation of the
cut-off scheme \eqref{CNtronquee} is then given by
\begin{equation}\label{discreteduhamelCN}
\vecX{n}{R} =  \mathcal{U}_{\dt}^{n,0}\vecX{}{0}  +  i\dt 
\sum_{l=1}^{n}\mathcal{U}_{\dt}^{n,l}G\left(\vecX{l-1}{R},\vecX{l}{R}\right),
\end{equation}
where $\mathcal{U}_{\dt}^{n,l}$ is the discrete random propagator solution of
the linear equation \eqref{ManakovPMDlimitelineairesemidiscret}. The proof
easily follows from the Lipschitz property of $G$. Moreover since 
$\mathcal{U}_{\dt}^{n,0}$ is an isometry in $\espace{L}^2$, the conservation of
the $\espace{L}^2$ norm follows taking the scalar product in $\espace{L}^2$ of
Equation \eqref{stochasticmanakovtronquee} with
$\left(\conjvecX{n+1/2}{R}\right)^t$.
\end{proof}
\subsubsection{Strong order of convergence}
Let us set $e^n_R = \vecX{n}{R}- \vecXexact{n}{R}$, where $\vecX{n}{R}$ is the
solution of \eqref{CNtronquee} and $\vecXexact{n}{R}$ is the solution of
\eqref{stochasticmanakovtronquee} evaluated at time $t_n$. The next result,
whose proof is postponed to Section \ref{app}, is crucial to obtain that the
strong order of convergence is $1/2$.
\begin{proposition}\label{lipschitzrate}
Let $X_0 \in \espace{H}^6$. For any $T \geqslant 0$ and $p \geqslant 1$, there
exists a positive constant $C$, depending on $R, T$ and $p$, and the
$\espace{H}^6$ norm of the initial data, such that
 \begin{align*}
  \espace{E}&\left( \max_{n = 0, \cdots, N}\norm{ \sum_{l=1}^{n} 
\int_{t_{l-1}}^{t_{l}} U\left(t_{n},s \right)G\left( \vecX{}{R}(s)\right) -
\mathcal{U}_{\dt}^{n,l}  G\left(\vecX{l-1}{R},\vecX{l}{R}\right)  ds
}{\espace{H}^1}^{2p} \right) \\ & \leqslant  C(R,T,p, \gamma,
\norm{X_0}{\espace{H}^6})\left[   \dt^{p} +    \espace{E}\left(  \max_{n \in
\llbracket 1 , N\rrbracket}\norm{ e^{ n}_{R}}{\espace{H}^1}^{2p} \right)\right],
 \end{align*}
where  the function $T \mapsto C(R,T,p, \gamma, \norm{X_0}{\espace{H}^6})$ is a
continuous function starting from zero.
\end{proposition}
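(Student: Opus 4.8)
The plan is to compare, on each subinterval $[t_{l-1}, t_l]$, the continuous and discrete Duhamel contributions by inserting the exact cut-off solution at the grid points, $\vecXexact{l}{R} = \vecX{}{R}(t_l)$, together with the two-argument nonlinearity. For $s \in [t_{l-1}, t_l]$ I would use the telescoping decomposition
\begin{align*}
U(t_{n}, s) G(\vecX{}{R}(s)) - \mathcal{U}^{n,l}_{\dt} G(\vecX{l-1}{R}, \vecX{l}{R}) &= U(t_{n}, s)\left[ G(\vecX{}{R}(s)) - G(\vecXexact{l-1}{R}, \vecXexact{l}{R}) \right] \\
&\quad + \left[ U(t_{n}, s) - \mathcal{U}^{n,l}_{\dt} \right] G(\vecXexact{l-1}{R}, \vecXexact{l}{R}) \\
&\quad + \mathcal{U}^{n,l}_{\dt}\left[ G(\vecXexact{l-1}{R}, \vecXexact{l}{R}) - G(\vecX{l-1}{R}, \vecX{l}{R}) \right],
\end{align*}
and bound the $\espace{H}^1$ norm of each of the three sums $\sum_{l=1}^{n}\int_{t_{l-1}}^{t_{l}}(\cdot)\,ds$ separately, then take $\max_{n}$, the $2p$-th power and the expectation.

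The first and third sums are the routine ones. Since the linear equation has $x$-independent coefficients, $U(t_{n}, s)$ is an isometry on $\espace{H}^1$, and $\mathcal{U}^{n,l}_{\dt}$ is an isometry on $\espace{H}^1$ by Proposition \ref{linearexistence}; hence I can drop both operators inside the norm. The first sum is then controlled by the Lipschitz property of $G$ (Lemma \ref{Lipschitz}) together with a time-regularity estimate for $\vecX{}{R}$ in $\espace{H}^1$ — the analogue of Lemma \ref{strong_estimate_sol} for the cut-off equation — which yields $\espace{E}(\sup_{s \in [t_{l-1},t_l]}\norm{\vecX{}{R}(s) - \vecXexact{l-1}{R}}{\espace{H}^1}^{2p}) \leqslant C\dt^{p}$. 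The third sum is controlled directly by Lemma \ref{Lipschitz}, producing $CR^{2}\max(\norm{e^{l-1}_{R}}{\espace{H}^1}, \norm{e^{l}_{R}}{\espace{H}^1})$. In both cases the $s$-integral supplies a factor $\dt$, and a discrete Hölder inequality on the Riemann sum $\dt\sum_{l=1}^{n}$ (whose total weight is $t_{n} \leqslant T$) turns the first contribution into a term of size $\dt^{p}$ and the third into $C(R,T,p)\,\espace{E}(\max_{n}\norm{e^{n}_{R}}{\espace{H}^1}^{2p})$, with constants proportional to a power of $T$, hence continuous in $T$ and vanishing at $T=0$.

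The genuinely delicate piece is the second sum, $\sum_{l}\int_{t_{l-1}}^{t_{l}}[U(t_{n}, s) - \mathcal{U}^{n,l}_{\dt}]G(\vecXexact{l-1}{R}, \vecXexact{l}{R})\,ds$, which is exactly the linear discretization error applied to the smooth source $G(\vecXexact{l-1}{R}, \vecXexact{l}{R})$ (after freezing $U(t_n,s)$ to $U(t_n,t_{l-1})$ at the cost of a further $O(\dt^{p})$ propagator-regularity term). Here both $U(t_{n}, s)$ and $\mathcal{U}^{n,l}_{\dt}$ depend on the Brownian increments between $t_{l-1}$ and $t_{n}$, so they are not $\mathcal{F}_{t_{l-1}}$-adapted and the Burkholder-Davis-Gundy inequality cannot be applied directly. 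I would therefore reuse the machinery of Proposition \ref{linearstabilite}: insert the propagator $\mathcal{V}^{l}_{\dt}$, split into an adapted part — handled by martingale inequalities — and a non-adapted part estimated through Lemma \ref{errorestimate}. Because reaching order $\dt^{p}$ in $\espace{H}^1$ from the linear estimate costs five extra derivatives, this is precisely where the hypothesis $X_0 \in \espace{H}^6$ enters: propagation of regularity for the continuous cut-off equation keeps $\vecX{}{R}(s) \in \espace{H}^6$, and the algebra property of $\espace{H}^{6}$ then gives $G(\vecXexact{l-1}{R}, \vecXexact{l}{R}) \in \espace{H}^{6}$ with moments bounded in terms of $\norm{X_0}{\espace{H}^6}$, so that this sum is also $O(\dt^{p})$.

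Collecting the three bounds gives the claimed estimate. The main obstacle is this second sum: controlling the non-adapted random propagators, which forces the $\mathcal{V}^{l}_{\dt}$ decomposition and the high regularity $\espace{H}^6$; the other two sums are routine once the isometry, Lipschitz and time-regularity estimates are in place. Finally, since every constant arises from a Riemann sum over $[0, t_{n}] \subset [0,T]$, the resulting $C(R,T,p,\gamma, \norm{X_0}{\espace{H}^6})$ is continuous in $T$ and starts from zero, as required for the subsequent Gronwall argument removing the error term $\espace{E}(\max_{n}\norm{e^{n}_{R}}{\espace{H}^1}^{2p})$.
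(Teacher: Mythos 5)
Your three-term telescoping follows the same overall strategy as the paper: its proof also splits the integrand into a time-regularity piece, a propagator-difference piece, and a scheme-error piece (the terms $A_1^{l-1,l}$, $A_2^{l-1,l}$, $A_3^{l-1,l}$), and your handling of the first and third sums (isometry/contraction of the propagators, Lemma \ref{Lipschitz}, and the time-regularity estimate, which is exactly the paper's Lemma \ref{estimate_sol_tronquee}) matches the paper's handling of $A_1^{l-1,l}$ and $A_3^{l-1,l}$. The difference is the choice of intermediate point: you insert $G(\vecXexact{l-1}{R},\vecXexact{l}{R})$, while the paper inserts $\Theta_{\vecX{}{R}}^{l-1,l}F\left(\vecX{}{R}(s)\right)$, i.e.\ it deliberately splits the cut-off factor from $F$. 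This difference is not cosmetic, and it creates a gap in your second sum.

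The source to which you apply the propagator difference, $G(\vecXexact{l-1}{R},\vecXexact{l}{R})$, is not $\mathcal{F}_{t_{l-1}}$-measurable: $\vecXexact{l}{R}$ enters both the cut-off factor $\Theta_R(\vecXexact{l}{R})$ and the second argument of $F$. Every tool you propose for this term requires an adapted source. Proposition \ref{linearstabilite} is proved for deterministic data and extends to random data only by conditioning on $\mathcal{F}_{t_{l-1}}$ and exploiting the independence of the subsequent Brownian increments; the adapted part of Lemma \ref{errorestimate} rests on the Burkholder-Davis-Gundy inequality, which again needs adaptedness; and the crude bound \eqref{borne1} applied to an $O(1)$ source produces a factor $N^p$, which diverges, so it cannot substitute. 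Hence the step fails as written. The paper's finer decomposition is designed precisely to avoid this: the anticipating dependence on time $t_l$ is confined to the scalar factor $\Theta_{\vecX{}{R}}^{l-1,l}\in[0,1]$, which can be pulled out of the norm pathwise, while the operator difference only ever acts on the adapted function $F(\vecXexact{l-1}{R})$ (terms $A_{2,2,1}$, $A_{2,2,2}$, $A_{2,2,3}$, the last one requiring Lemma \ref{localerrorNL} because $\mathcal{U}_{\dt}^{n,l}$ carries an extra factor $T^{-1}_{\dt,l-1}$ compared with the product $U_{\dt,n}\cdots U_{\dt,l-1}$ that the linear estimate actually controls --- another detail your ``reuse the machinery'' step glosses over). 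Your argument can be repaired with one more telescoping step: write $G(\vecXexact{l-1}{R},\vecXexact{l}{R}) = G(\vecXexact{l-1}{R},\vecXexact{l-1}{R}) + \bigl[G(\vecXexact{l-1}{R},\vecXexact{l}{R}) - G(\vecXexact{l-1}{R},\vecXexact{l-1}{R})\bigr]$; the bracket is $O(\dt^{1/2})$ in $L^{2p}(\Omega;\espace{H}^1)$ by Lemmas \ref{Lipschitz} and \ref{estimate_sol_tronquee}, and since $U(t_n,t_{l-1})$ and $\mathcal{U}_{\dt}^{n,l}$ are contractions on $\espace{H}^1$ its total contribution after the time integral and the sum over $l$ is $O(\dt^p)$; the remaining adapted part $G(\vecXexact{l-1}{R},\vecXexact{l-1}{R}) = \Theta_R^2(\vecXexact{l-1}{R})F(\vecXexact{l-1}{R})$ can then be handled by the conditioning argument together with Lemma \ref{boundinH1}, as in the paper.
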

As a consequence, we obtain 
\begin{proposition}\label{cvCN}
For any $T \geqslant 0$ and $p \geqslant 1$, there exists a positive constant
$C'$, depending on $R, T$ and $p$, and the $\espace{H}^6$ norm of the initial
data, such that
\begin{equation}\label{estimate_tronquee}
\espace{E}\left(\max_{n = 0, \cdots, N}\norm{e^n_R}{\espace{H}^1}^{2p}
\right)\leqslant C'(R, T, p , \gamma, \norm{X_0}{\espace{H}^6}) \dt^p.
\end{equation}
\end{proposition}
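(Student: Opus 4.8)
The plan is to subtract the discrete Duhamel identity \eqref{discreteduhamelCN} from the continuous mild formula \eqref{duhamellimitetronquee} taken at $t_n$, and to split the error into a linear and a nonlinear contribution. Writing $\vecXexact{n}{R}=U(t_n,0)v+i\sum_{l=1}^{n}\int_{t_{l-1}}^{t_{l}}U(t_n,s)G\left(\vecX{}{R}(s)\right)ds$ and using \eqref{discreteduhamelCN}, I obtain
\[
e^n_R=\bigl(U(t_n,0)v-\mathcal{U}_{\dt}^{n,0}v\bigr)+i\sum_{l=1}^{n}\int_{t_{l-1}}^{t_{l}}\Bigl(U(t_n,s)G\left(\vecX{}{R}(s)\right)-\mathcal{U}_{\dt}^{n,l}G\left(\vecX{l-1}{R},\vecX{l}{R}\right)\Bigr)ds,
\]
where the discrete summand is constant in $s$, so that the second sum is exactly (up to the factor $i$) the quantity estimated in Proposition \ref{lipschitzrate}.

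Next I would raise the $\espace{H}^1$ norm to the power $2p$, use convexity to separate the two contributions, and take the maximum over $n$ followed by expectation. The linear part $U(t_n,0)v-\mathcal{U}_{\dt}^{n,0}v$ is precisely the error of the linear scheme \eqref{ManakovPMDlimitelineairesemidiscret} with vanishing initial error, so Proposition \ref{linearstabilite} applied with $m=1$ (which is why $v=X_0\in\espace{H}^{1+5}=\espace{H}^6$ is assumed) gives $\espace{E}\bigl(\max_n\norm{U(t_n,0)v-\mathcal{U}_{\dt}^{n,0}v}{\espace{H}^1}^{2p}\bigr)\leqslant C\dt^p$. The nonlinear part is bounded directly by Proposition \ref{lipschitzrate}. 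Setting $E_T=\espace{E}\bigl(\max_{n\leqslant N_T}\norm{e^n_R}{\espace{H}^1}^{2p}\bigr)$, these two bounds combine into the recursive inequality
\[
E_T\leqslant C_1\bigl(R,T,p,\gamma,\norm{X_0}{\espace{H}^6}\bigr)\dt^p+C\bigl(R,T,p,\gamma,\norm{X_0}{\espace{H}^6}\bigr)E_T.
\]

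The main obstacle is that the constant $C(R,T,\ldots)$ multiplying $E_T$ need not be less than one for the prescribed horizon $T$, so the term cannot be absorbed on the left directly. Here I would use the stated property that $T\mapsto C(R,T,\ldots)$ is continuous and vanishes at $T=0$: pick $T_0>0$ with $C(R,T_0,\ldots)\leqslant 1/2$, so that on $[0,T_0]$ the absorption yields $E_{T_0}\leqslant 2C_1\dt^p$. To reach an arbitrary $T$, I would partition $[0,T]$ into $\lceil T/T_0\rceil$ blocks of length at most $T_0$ and repeat the decomposition on each block, now restarting from the nonzero error at the left endpoint via the flow property of $U(t,s)$ and of the propagator $\mathcal{U}_{\dt}^{n,l}$. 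On each block the nonlinear constant is again $C(R,T_0,\ldots)\leqslant 1/2$, while the propagated initial error contributes a term bounded by the $\espace{H}^1$-error at the previous endpoint because both $U(t,s)$ and $\mathcal{U}_{\dt}^{n,l}$ act boundedly on $\espace{H}^1$; summing the finitely many blocks produces a finite, $T$-dependent constant $C'$ and the claim. The one a priori ingredient needed to keep the per-block constants uniform is control of $\espace{E}\bigl(\sup_{t\leqslant T}\norm{\vecX{}{R}(t)}{\espace{H}^6}^{q}\bigr)$, which holds because $G$ is smooth and globally Lipschitz (Lemma \ref{Lipschitz}), so that the restarted linear errors can again be estimated by Proposition \ref{linearstabilite}.
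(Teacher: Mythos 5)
Your proposal is correct and follows essentially the same route as the paper: subtract the discrete Duhamel identity \eqref{discreteduhamelCN} from the mild formulation \eqref{duhamellimitetronquee}, bound the linear part $\bigl(U(t_n,0)-\mathcal{U}_{\dt}^{n,0}\bigr)v$ via Proposition \ref{linearstabilite} (with $m=1$, whence $X_0\in\espace{H}^6$) and the nonlinear part via Proposition \ref{lipschitzrate}, then absorb the error term on a short interval $[0,T_1]$ where the constant is below one and iterate over finitely many blocks up to $T$. The only cosmetic difference is that the uniform a priori $\espace{H}^6$ control you invoke at the end is Lemma \ref{boundinH1} in the paper (not Lemma \ref{Lipschitz}), and it is already built into Proposition \ref{lipschitzrate}, so your argument needs no extra ingredient.
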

\begin{proof}[Proof of Proposition \ref{cvCN}]
Using the Duhamel formulation \eqref{duhamellimitetronquee} for the continuous
cut off equation and the discrete Duhamel equation \eqref{discreteduhamelCN},
and from Proposition \ref{linearstabilite} and \ref{lipschitzrate}, we obtain
for any $p \geqslant 1$
\a
 \espace{E}\left( \max_{n \in \llbracket 1 , N\rrbracket} \norm{ e^{
n}_{R}}{\espace{H}^1}^{2p}  \right)     \leqslant C(R,T,p, \gamma,
\norm{X_0}{\espace{H}^6})\left[   \dt^{p} +    \espace{E}\left(  \max_{n \in
\llbracket 1 , N\rrbracket}\norm{ e^{ n}_{R}}{\espace{H}^1}^{2p} \right)\right].
\b
Thus, for $T=T_1$ chosen sufficiently small so that $ C(T_1,R,p, \gamma,
\norm{X_0}{\espace{H}^6})<1$, we obtain 
\[
\espace{E}\left( \max_{n \in \llbracket 1 , N_{T_1}\rrbracket} \norm{e^{
n}_{R}}{\espace{H}^1}^{2p}  \right)  \leqslant \frac{C(T_1,R,p, \gamma,
\norm{X_0}{\espace{H}^6})}{1-C(T_1,R,p, \gamma, \norm{X_0}{\espace{H}^6})} 
\dt^{p}.
\] 
Iterating this process on the time intervals $[T_1, 2T_1]$ and up to the final
time $T$, we conclude that the scheme is of order $1/2$.
\end{proof}
\subsection{The non Lipschitz case}
In this section, we investigate the order in probability and the almost sure
order for the Crank-Nicolson scheme \eqref{CNApprox} as an approximation of
Equation \eqref{stochasticmanakov}. In order to define a discrete solution to
Equation \eqref{CNApprox}, let us define the random variable
\[
\vect{\tau}{R}{\dt} = \inf\left\lbrace  n\dt, \norm{\vecX{n-1}{R}}{\espace{H}^1}
\geqslant  R \quad \text{or} \quad \norm{\vecX{n}{R}}{\espace{H}^1} \geqslant  R
\right\rbrace, 
\]
which is a $\mathcal{F}_{n\dt}$ stopping time. It is then clear that $\left(
\vecX{n}{R}\right)_{n=0,\cdots, n_0-1} $ satisfy the scheme \eqref{CNApprox}
provided that $n_0\dt <  \vect{\tau}{R}{\dt} $. However, we do not know if a
solution $\vecX{n+1}{N}$ to \eqref{CNApprox} exists and is unique. We cannot
proceed as in the continuous case defining the blow-up time as the limit of
$\vect{\tau}{R}{\dt}$ when $R$ goes to infinity because the time step $\dt$
depends on the cut-off radius $R$ as it is seen in Proposition
\ref{existenceCN}. The next Lemma gives a sufficient condition on the time step
$\dt$ to extend the solution to $n+1$ \cite{[bouardorder]}. 
\begin{lemma}\label{stabilityball}
There exists a constant $C_2$ such that for any $\dt > 0$ and $R_0$ satisfying
$\dt \leqslant C_2R_0^{-2}$ and $n\dt \leqslant \tau^{R_0}_{\dt}$, there exists
a unique adapted solution $Z^{n+1}$ of
\begin{equation}\label{sol_t+1}
Z^{n+1} = U_{\dt,n}\vecX{n}{}+ i \dt T_{\dt,n}^{-1}F\left(\vecX{n }{},
Z^{n+1}\right) 
\end{equation}
such that $\norm{Z^{n+1}}{\espace{H}^1}\leqslant 4R_0$, provided
$\norm{\vecX{n}{}}{\espace{H}^1}\leqslant R_0$.
\end{lemma}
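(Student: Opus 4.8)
The plan is to solve the implicit relation \eqref{sol_t+1} by a fixed-point argument in the closed ball $\mathcal{B} = \{ Z \in \espace{H}^1 : \norm{Z}{\espace{H}^1} \leqslant 4R_0 \}$. I introduce the map $\Phi(Z) = U_{\dt,n}\vecX{n}{} + i\dt\, T_{\dt,n}^{-1} F(\vecX{n}{}, Z)$, so that $Z^{n+1}$ is exactly a fixed point of $\Phi$. First I would establish two linear bounds that are \emph{uniform} in $\dt$ and in $\omega$. From Proposition \ref{linearexistence} the propagator $U_{\dt,n}$ is an isometry on $\espace{H}^m$ for every $m$, hence $\norm{U_{\dt,n}\vecX{n}{}}{\espace{H}^1} = \norm{\vecX{n}{}}{\espace{H}^1} \leqslant R_0$. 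For $T_{\dt,n}^{-1}$ I would use the Fourier representation \eqref{symbol}: since $iH_{\dt,n}$ is selfadjoint, the symbol of $H_{\dt,n}$ is $i$ times a Hermitian matrix $A(\xi)$, so the symbol of $T_{\dt,n} = \Id + \frac12 H_{\dt,n}$ is $I_2 + \frac{i}{2}A(\xi)$, whose eigenvalues are of the form $1 + i\lambda$ with $\lambda \in \espace{R}$ and thus have modulus $\geqslant 1$. Consequently the matrix multiplier of $T_{\dt,n}^{-1}$ has spectral norm $\leqslant 1$ pointwise in $\xi$, and since it commutes with the scalar weight $(1+|\xi|^2)^{1/2}$ I get $\norm{T_{\dt,n}^{-1}}{\mathcal{L}(\espace{H}^1)} \leqslant 1$, independently of $\dt$, of $\gamma$ and of the unbounded Gaussian variables $\chi_k^n$.

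Next I would control the nonlinearity using that $\espace{H}^1(\espace{R})$ is a Banach algebra in one space dimension. This yields the cubic bound $\norm{F(\vecX{n}{}, Z)}{\espace{H}^1} \leqslant C(\norm{\vecX{n}{}}{\espace{H}^1} + \norm{Z}{\espace{H}^1})^3$ and the local Lipschitz estimate $\norm{F(\vecX{n}{}, Z_1) - F(\vecX{n}{}, Z_2)}{\espace{H}^1} \leqslant C(\norm{\vecX{n}{}}{\espace{H}^1} + \norm{Z_1}{\espace{H}^1} + \norm{Z_2}{\espace{H}^1})^2 \norm{Z_1 - Z_2}{\espace{H}^1}$. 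On the ball $\mathcal{B}$, using $\norm{\vecX{n}{}}{\espace{H}^1} \leqslant R_0$, both prefactors are controlled by multiples of $R_0^3$ and $R_0^2$ respectively.

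Combining these, for $Z \in \mathcal{B}$ I obtain $\norm{\Phi(Z)}{\espace{H}^1} \leqslant R_0 + \dt\, C (5R_0)^3$, which is $\leqslant 4R_0$ as soon as $125\,\dt\, C R_0^2 \leqslant 3$; and for $Z_1, Z_2 \in \mathcal{B}$ I obtain $\norm{\Phi(Z_1) - \Phi(Z_2)}{\espace{H}^1} \leqslant 81\,\dt\, C R_0^2 \norm{Z_1 - Z_2}{\espace{H}^1}$, a strict contraction as soon as $81\,\dt\, C R_0^2 < 1$. Choosing $C_2$ to be the smaller of the two resulting thresholds, the hypothesis $\dt \leqslant C_2 R_0^{-2}$ makes $\Phi$ a contraction of the complete metric space $\mathcal{B}$ into itself, and Banach's fixed-point theorem delivers a unique $Z^{n+1} \in \mathcal{B}$ solving \eqref{sol_t+1}. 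Adaptedness follows because $\Phi$ is built from the $\mathcal{F}_{t_n}$-measurable datum $\vecX{n}{}$ together with $U_{\dt,n}$ and $T_{\dt,n}^{-1}$, which depend only on the increment $\chi^n$; hence every Picard iterate is $\mathcal{F}_{t_{n+1}}$-measurable, and so is their limit.

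The step I expect to be most delicate is the uniform bound $\norm{T_{\dt,n}^{-1}}{\mathcal{L}(\espace{H}^1)} \leqslant 1$: everything hinges on the fact that, because the noise is discretized implicitly and $H_{\dt,n}$ is skew-adjoint, the real part of the symbol of $T_{\dt,n}$ is exactly the identity, so the bound does not deteriorate with the size of the Gaussian increments $\chi_k^n$ nor with $\dt$. This is precisely the unconditional stability that an explicit It\^o discretization would lack, and it is what makes the threshold $C_2 R_0^{-2}$ depend only on the algebra constant of $\espace{H}^1$ and not on $\omega$.
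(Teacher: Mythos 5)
Your proof is correct. Note that the paper itself gives no proof of this lemma: it is stated with a pointer to \cite{[bouardorder]}, where the analogous statement for the stochastic NLS Crank--Nicolson scheme is proved by precisely the kind of contraction-mapping argument you set up, so your approach matches the intended one. The ingredient that is genuinely specific to the Manakov setting, and which you correctly identify as the crux, is the $\omega$-uniform bound $\norm{T_{\dt,n}^{-1}}{\mathcal{L}(\espace{H}^1)}\leqslant 1$; your symbol computation is right, but you should state explicitly that the step from ``eigenvalues of $I_2+\tfrac{i}{2}A(\xi)$ have modulus $\geqslant 1$'' to ``the inverse has spectral norm $\leqslant 1$'' uses that this matrix is \emph{normal} (which holds because $A(\xi)$ is Hermitian) --- for a general matrix the eigenvalues do not control the spectral norm. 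The remaining pieces (isometry of $U_{\dt,n}$ on $\espace{H}^1$ from Proposition \ref{linearexistence}, the Banach-algebra estimates on $F$ in one space dimension, the self-map and contraction constants on the ball of radius $4R_0$ under $\dt\leqslant C_2R_0^{-2}$, and adaptedness of the limit of the Picard iterates) are all sound and are exactly what the cited reference does.
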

Following the approach of \cite{[bouardorder]}, we now define a new process
$Y^{n+1}_R$, solution of the truncated scheme \eqref{CNtronquee} with
$\vecX{n}{R} = \vecX{n}{}$, and we define the random variable
\[
 R_{n+1} = \min \left\lbrace R \in \espace{N}, \norm{Y^{n+1}_R}{\espace{H}^1}
\leqslant R \right\rbrace.
\]
 Fix any deterministic function $\vecX{ }{\dt, \infty}$ such that $\norm{\vecX{
}{\dt, \infty}}{\espace{H}^1} =4R_0$. Thus, for $\dt \leqslant C_2R_0^{-2}$, we
can define a solution of Equation \eqref{CNApprox} as follows
\be\label{constructsol}
\vecX{n+1}{} = \left\{ \begin{array}{ll}
Z^{n+1} \qquad \text{if} \quad  \norm{\vecX{n}{}}{\espace{H}^1}\leqslant R_0\\
 Y^{n+1}_{R_{n+1}} \qquad \text{if} \quad  \norm{\vecX{n}{}}{\espace{H}^1}> R_0
\text{ and }   R_{n+1} < +\infty  \text{ and }  \vecX{n}{} \neq \vecX{ }{\dt,
\infty}\\
\vecX{ }{\dt, \infty} \qquad \text{otherwise.}
\end{array}\right.
\ee
Finally, let $\tau^*_{\dt}$ be the discrete stopping time such that
$\tau^*_{\dt} = n_0\dt$ and $n_0$ is the first integer such that $\vecX{n}{} =
\vecX{ }{\dt, \infty}$. In this way, we define a solution to \eqref{CNApprox} up
to time $\tau^*_{\dt}$. The proof in \cite{[bouardorder]} can be adapted
straightforwardly to obtain the convergence in probability stated in Theorem
\ref{errorNL}. Note that from the almost sure convergence, we get, for any
stopping time $\tau < \tau^*$ a.s, $\lim_{\dt \to 0} \espace{P}\left(
\tau^*_{\dt} <\tau\right)  =0$.
Moreover, using the Fatou Lemma and the lower semicontinuity of the
characteristic function $\mathds{1}_{\tau^*_{\dt} <\tau}$, we obtain
$\espace{P}\left(\liminf_{\dt \to 0} \tau^*_{\dt} \geqslant \tau^* \right) =1$.

\section{Numerical almost sure error analysis}\label{numsim}
In this section, we study numerically the almost sure order of convergence of
the Crank Nicolson scheme \eqref{CNApprox} and with the aim of recovering the
theoretical result of the previous analysis. We consider finite-difference
approximation to simulate the $\espace{C}^2$ valued solution $X=(X_1, X_2)$ of
the stochastic Manakov system \eqref{stochasticmanakov}. We define a constant
$a>0$ and a final time $T>0$. The time step is $\Delta t = \frac{T}{N}>0$ and
the space step is given by $\Delta x=\frac{2a}{M+1} >0$. The grid is assumed to
be homogeneous $(t_n,x_j)=(n\Delta t, j\Delta x)$ for $n\in\left\{0,\ldots,
N\right\}$ and $j\in\left\{0,\ldots, M+1\right\}$. The computational domain
$[-a,a]$ is taken sufficiently large to avoid numerical reflections and we
consider  homogeneous Dirichlet boundary conditions. We denote $r = \dt/
(\dx)^2$ and the solution $X=(X_1, X_2)$ of Equation \eqref{stochasticmanakov},
evaluated at $(t_n,x_j)$, is approximated by $\vecX{n}{j}=\left( \vecX{n}{1,j},
\vecX{n}{2,j}\right) $. We choose a centered discretization due to the random
group velocity which does not have a well defined sign. The fully discrete
Crank-Nicolson scheme is given by
 \begin{align}\label{CNApproxfull}
 i\left( \vecX{n+1}{j} -  \vecX{n}{j} \right) & + r \Delta \vecX{n+1/2}{j} 
   + i\frac{\sqrt{\gamma r}}{2}\sum_{k=1}^3 \sigma_k \nabla \vecX{n+1/2}{j}
\chi_k^n  \\& 
  +\dz \frac{1}{2} \left(\abs{\vecX{n }{j}}^2 +\abs{\vecX{n+1 }{j}}^2
\right)\vecX{n+1/2}{j} =0 \nonumber,
 \end{align}
where 
\bearrays
\Delta \vecX{n+1/2}{j} = \vecX{n+1/2}{j-1}-2\vecX{n+1/2}{j}+\vecX{n+1/2}{j+1}\\
\nabla \vecX{n+1/2}{j} = \vecX{n+1/2}{j+1}-\vecX{n+1/2}{j-1}.
\eearrays
%  \begin{align}\label{CNApproxfull}
%  i&\left( \vecX{n+1}{j} -  \vecX{n}{j} \right)  + r 
% \left(\vecX{n+1/2}{j-1}-2\vecX{n+1/2}{j}+\vecX{n+1/2}{j+1} \right)\nonumber\\
%  & + i\frac{\sqrt{\gamma r}}{2}\sum_{k=1}^3 \sigma_k
% \left(\vecX{n+1/2}{j+1}-\vecX{n+1/2}{j-1}  \right) \chi_k^n +\dt F\left(\vecX{n
% }{j},\vecX{n+1 }{j}\right)\vecX{n+1/2}{j} =0,
%  \end{align}
% where $F\left(\vecX{n }{j},\vecX{n+1 }{j}\right)$ is given by
% \begin{equation*}
% F\left(\vecX{n }{j},\vecX{n+1 }{j}\right)= \frac{4}{9} \left(\abs{\vecX{n
% }{1,j}}^2 +\abs{\vecX{n+1 }{1,j}}^2 +\abs{\vecX{n }{2,j}}^2 +\abs{\vecX{n+1
% }{2,j}}^2 \right).
% \end{equation*}
We consider soliton solutions of the deterministic Manakov equation as initial
input, that are  of the form  \cite{[Hasegawa_soliton_exp]}
\begin{align}\label{Manakov_soliton}
X(t,x)=  \begin{pmatrix}
\cos \Theta/2 \exp(i\phi_1)\\
\sin \Theta/2 \exp(i\phi_2)
\end{pmatrix} \eta  \text{sech} \eta(x-\tau(t))e^{-i k(x-\tau(t))+i\alpha(t)}.
\end{align} 
Here, the polarization angle $\Theta$, the phases $ \phi_1, \phi_2$, the
amplitude $\eta$ and the group velocity $-k$ are arbitrary constants and the
position $\tau$ and $\alpha$  are given by $\tau(t) = \tau_0 -kt$   and
$\alpha(t)= \alpha_0 +\frac{1}{2}\left(\eta^2 + k^2 \right) t$. We also define
the relative errors in the $\espace{L}^2$ and $\espace{L}^{\infty}$ norms
between the exact solution $\vecXexact{n}{}$, evaluated at time $t_n$, and the
approximated solution $\vecX{n}{}$ 
\begin{align}\label{relativeerror}
\text{err}_p^n  = \frac{\norm{\vecXexact{n}{} - \vecX{n}{}}{\espace{L}^p}}{
\norm{X_0}{\espace{L}^p}},  \qquad  p= \left\lbrace 2, \infty \right\rbrace.
\end{align}
The Stochastic Manakov equation possesses one invariant, which corresponds to
the mass. A discrete version of this quantity is given by
\begin{align}
 \norm{ \vecX{n}{}}{\espace{L}^2}^2 &= \dx \sum_{j=0}^{M+1} \left(
\abs{\vecX{n}{1,j}}^2 + \abs{\vecX{n}{2,j}}^2 \right).\label{discretemass}
\end{align}
To measure the ability of this scheme to preserve the mass, we introduce the
following error
\begin{align}
 \text{err}_{\espace{L}^2}^N &=
\max_{n \in \llbracket 1 , N\rrbracket} \abs{ \frac{\norm{\vecX{n}{
}}{\espace{L}^2}^2 - \norm{X_0}{\espace{L}^2}^2}{\norm{X_0}{\espace{L}^2}^2}
}.\label{errorinvnorm}
\end{align}
The set of parameters used for the simulations are given in the following Table
\ref{parameters_as}. 
\begin{table}[h]
\begin{center}
\begin{tabular}{|l  |c |}
\hline
& Almost-sure order    \\
\hline
\hline
 \multirow{1}{23mm}{Soliton} & 
$\phi_1= \phi_2 = k = \tau =0, \ \Theta=-\pi/2,\  \eta=1/2, \ \alpha_0=\pi, \  \gamma=0.1$ \\
\hline
\multirow{1}{23mm}{Discretization} & $a=30, \ M=20000, \ T=4, \ N_{\text{coarse}} = 40, \ N_{\text{fine}} = 2520$ \\
\hline
\end{tabular}
\end{center}
\caption{Set of parameters used to obtain the almost sure order. }
\label{parameters_as}
\end{table}
Since there is no explicit solution  for the stochastic Manakov equation, we
first compute an approximated solution $\vecX{n}{}$ of Equation
\eqref{stochasticmanakov} on a fine mesh $\dt= T/N_{\text{fine}}$, that we
compare to approximations of the same equation on coarser grids. A coarser grid,
in the $t$ variable, is twice as big as the previous one. The Brownian path is
kept fixed for each approximation as well as the space step $\dx$.  Figure
\ref{ascurve} displays two convergence curves corresponding to the logarithm of
the relative errors \eqref{relativeerror}. The slopes of these curves are
compared to a curve with slope $1/2$. From Fig. \ref{ascurve}, we see that the
almost sure order of the Crank Nicolson scheme is $1/2$ in the $t$ variable, and
the result agrees with the theoretical analysis of the previous section. Table
\ref{asresult} displays the numerical approximation errors in the $\espace{L}^2$
and $\espace{L}^{\infty}$ norms together with the relative error  for the
conservation of the mass. For an Euler scheme based on the Itô formulation, the
$\espace{L}^2$ norm is not preserved and the numerical error is
$\text{err}_{\espace{L}^2}^N = 0.7364$.

\saut{0.5}

\begin{minipage}{.45\textwidth} 
 \centerline{\includegraphics[width=.8\columnwidth]{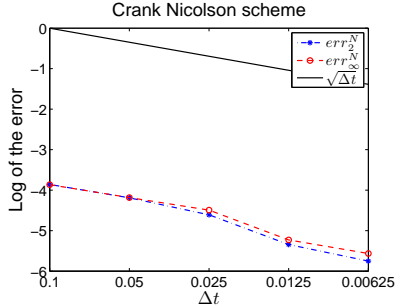} } 
\captionof{figure}{Plot of the log of the relative errors $\text{err}_2^N$ and
$\text{err}_{\infty}^N$ for the  scheme  \eqref{CNApproxfull}.}
\label{ascurve}
\end{minipage}  \qquad
\begin{minipage}{.45\textwidth}\centering
\begin{tabular}{|p{0.3\textwidth}|p{0.3\textwidth}|}
\hline
  & Crank-Nicolson    \\
\hline
\hline
$\text{err}_2^N$  & $3.178e^{-3}$   \\
\hline
$\text{err}_{\infty}^N$  & $3.83e^{-3}$    \\
\hline
$\text{err}_{\espace{L}^2}^N$  & $ 5.547e^{-11}$  \\
\hline
CPU time &  $ 251.87$s     \\
\hline
\end{tabular}
\captionof{table}{Numerical values of relative errors for $\Delta t = 0.00625$.}
\label{asresult}
\end{minipage}

\saut{0.5}

\noindent Different schemes may also be proposed to simulate the behaviour of
the solution of the stochastic Manakov equation \eqref{stochasticmanakov} : a
relaxation scheme and a Fourier split-step scheme. The fully discrete relaxation
scheme reads
\begin{eqnarray}\left\{ \begin{array}{ll} \label{RelaxApprox}
\vect{\Phi}{n+1/2}{j} = 2\abs{\vecX{n}{j}}^2 -  \vect{\Phi}{n-1/2}{j}\\[0.20cm]
 i\left( \vecX{n+1}{j} -  \vecX{n}{j} \right) +r  \Delta \vecX{n+1/2}{j}  
  + i\dfrac{\sqrt{\gamma r}}{2}\sum_{k=1}^3 \sigma_k \nabla \vecX{n+1/2}{j} 
\chi_k^n 
  \\ [0.20cm] \qquad   + \vect{\Phi}{n+1/2}{j} \vecX{n+1/2}{j}   \dz  
=0,
\end{array}\right.\end{eqnarray}
where $\vect{\Phi}{-1}{j}=\abs{\vecX{0}{j}}^2$.
 The stochastic Fourier split-step scheme is based on the decomposition of the
flow into two parts : one associated to the linear part of Equation
\eqref{stochasticmanakov} and the other to the nonlinear part. The scheme is
given by
\bearray\label{SplittingApprox}
 i\left( \widehat{Y}_{k}^{n+1} - \vecfX{n}{k}\right)  = m_k \left(
\widehat{Y}_{k}^{n+1} + \widehat{X}_{k}^{n}\right) \\[0.3cm]
  \vecX{n+1}{j} = \exp \left( i  \abs{Y^{n+1}_{j}}^2 \dz \right)
Y^{n+1}_{j},
 \eearray
where the Fourier multipliers $m_k$ are given by
\[
m_k=\left( \dfrac{\dz h_k^2}{2}   + \dfrac{\sqrt{\gamma \dz
}h_k}{2}\displaystyle{\sum_{l=1}^3 \sigma_l  \chi_l^n} \right)
\]
and
$\vecfX{n}{k}$ is the discrete Fourier transform of $\vecX{n}{j}$ and the
vector 
$ h$ contains the $M$ Fourier modes.
In this case, the matrices we have to invert for the linear step are block
diagonal. Consequently, this scheme is less time consuming than the relaxation
scheme and the Crank-Nicolson scheme. Figure \ref{ascurveOS1}  displays  the
almost sure error curves for these two schemes and they also seem to be of order
$1/2$. 
\begin{rmq}
In optics, spectral methods are very often used to solve the nonlinear
Schr\"odinger equation because the group 
associated to the free equation has an explicit and very simple form. The
random propagator, 
solution of the linear equation associated to 
\eqref{stochasticmanakov}, does not have an explicit formulation 
in Fourier space \cite{[gazeau], [gazeauPHD]}. Consequently a numerical approximation
of the linear equation is obtained resolving a linear system.
\end{rmq}
\begin{table}[htbp]
\begin{center}
\begin{tabular}{|c | c |c | c |c |}
\hline
  & $\text{err}_2^N$ & $\text{err}_{\infty}^N$ & $\text{err}_{\espace{L}^2}^N$  &  CPU time   \\
\hline
\hline
Fourier split-step  & $2.886e^{-3}$  & $3.455e^{-3}$ &$ 3.286e^{-14}$ & $ 180$s  \\
\hline
 relaxation &  $1.8e^{-3}$  &  $1.46e^{-3}$ & $3.957e^{-13}$ &  $121.53$s \\
\hline
\end{tabular}
\end{center}
\caption{Numerical values of relative errors for $\Delta t = 0.00625$.}
\end{table}

\begin{figure}
\includegraphics[width=.4\columnwidth]{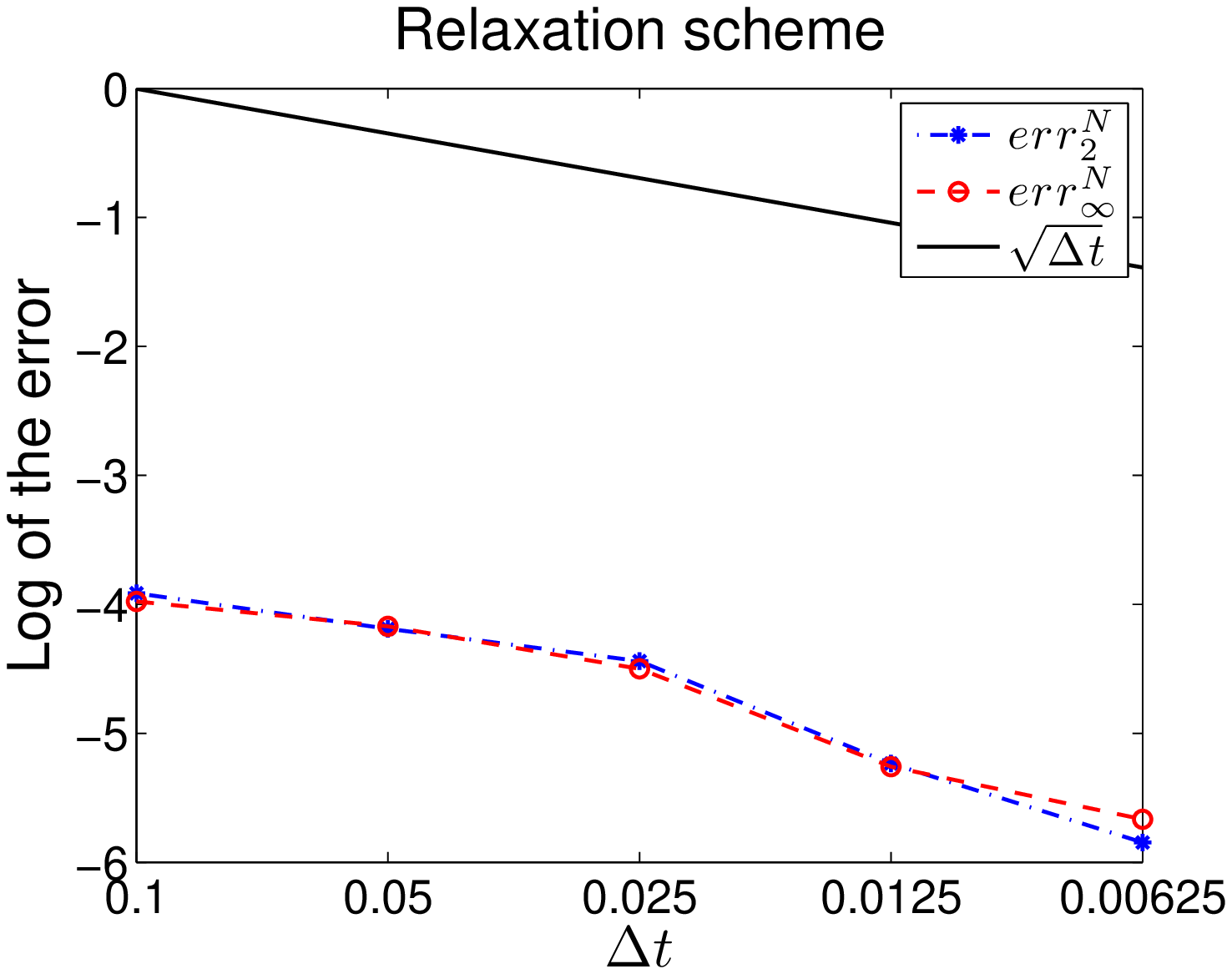}\hfill
\includegraphics[width=.4\columnwidth]{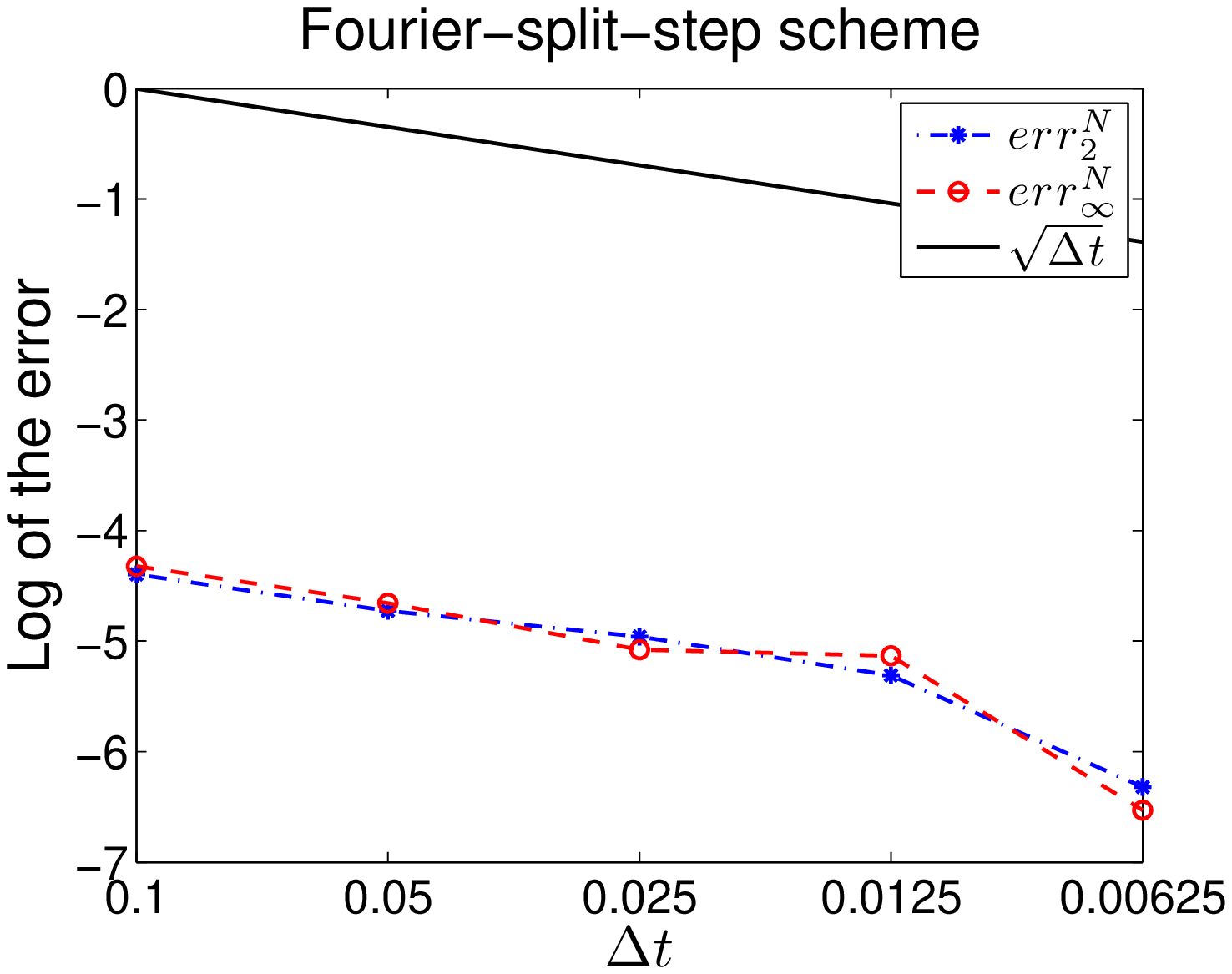}
\caption{Plot of the log of the relative errors $\text{err}_2^N$ and
$\text{err}_{\infty}^N$ respectively for the scheme \eqref{RelaxApprox} and
\eqref{SplittingApprox}}\label{ascurveOS1}
\end{figure}

\section{Proof of Lemma \ref{errorestimate} and Proposition
\ref{lipschitzrate}}\label{app}
\subsection{Proof of Lemma \ref{errorestimate}}
The proof of this lemma is divided into two parts. In a first step, we prove
inequality \eqref{borne1}. The second step consists in proving estimate
\eqref{borne3}; the same arguments are used to deduce the bound \eqref{borne2}
from \eqref{borne1}.
\paragraph{Proof of estimate \eqref{borne1}.}
We begin this proof with a lemma stating that $\mathcal{V}^{l}_{\dt}$ is almost
surely a bounded operator in $\espace{L}^2$ with a random continuity constant.
\begin{lemma}\label{continueL2}
The random matrix operator $\mathcal{V}^{l}_{\dt}$ is almost surely a bounded
operator in $\espace{L}^2$ and for any $l= 0 ,\cdots, n,$  
\[
 \norm{\mathcal{V}^{l}_{\dt} f }{\espace{L}^2} \leqslant C_{0,l}\left(\omega
\right)  \norm{f}{\espace{L}^2},
\]
such that for all $p \geqslant 1$, there exists a constant $C(p)$ independent of $n$, %the random variable $C_{0,l}\left(\omega \right)$ is integrable at any
%order such that its moments do not depend on $l$ and
\[
\espace{E}\left(\max_{n  \in  \llbracket 1, N \rrbracket} C_{0,n}^{2p} \right) < C(p).
\]
\end{lemma}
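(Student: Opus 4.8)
The plan is to diagonalise everything with the Fourier transform, where $\mathcal{V}^{l}_{\dt}$ becomes multiplication by a $2\times 2$ matrix symbol, and then reduce the operator norm to a pointwise (in $\xi$) matrix estimate. Writing $\mathcal{F}(H_{\dt,n})(\xi)=iM_n(\xi)$ with $M_n(\xi)=\dt\abs{\xi}^2 I_2+\sqrt{\gamma\dt}\,\xi\,\Sigma_n$ and $\Sigma_n=\sum_{k=1}^3\chi_k^n\sigma_k$, the symbol \eqref{symbol} shows $M_n(\xi)$ is Hermitian, so $\mathcal{F}(T_{\dt,n})(\xi)=I_2+\tfrac{i}{2}M_n(\xi)$, $\mathcal{F}(S_{\dt,n})(\xi)=I_2-\tfrac{i}{2}M_n(\xi)$ (writing $S_{\dt,n}=\Id-\tfrac12 H_{\dt,n}$), and the symbol of $U_{\dt,n}$ is the Cayley transform of the skew-Hermitian matrix $\tfrac{i}{2}M_n(\xi)$, hence a unitary matrix for every $\xi$ (this is the pointwise version of $U^*_{\dt,n}U_{\dt,n}=\Id$ from Proposition \ref{linearexistence}). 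Then $\mathcal{V}^{l}_{\dt}$ is multiplication by $\mathcal{M}^l(\xi):=\mathcal{F}(T_{\dt,0})\bigl(\prod_{j=1}^{l-1}\mathcal{F}(U_{\dt,j})^{-1}\bigr)\mathcal{F}(T_{\dt,l-1})^{-1}$, and its $\espace{L}^2$-operator norm equals $C_{0,l}(\omega)=\sup_\xi\normm{\mathcal{M}^l(\xi)}{2}$; everything reduces to a uniform-in-$\xi$ bound on this spectral norm.

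The algebraic heart is that the middle product $\prod_{j=1}^{l-1}\mathcal{F}(U_{\dt,j})(\xi)^{-1}$ is unitary, so only the two end factors survive. The danger is that $\mathcal{F}(T_{\dt,0})$ and $\mathcal{F}(T_{\dt,l-1})^{-1}$ individually grow (resp. decay) like $\abs{\xi}^2$; the point is that these cancel. I would factor out the scalar $a(\xi)=1+\tfrac{i}{2}\dt\abs{\xi}^2$, writing $\mathcal{F}(T_{\dt,j})=a\,(I_2+\tfrac{i}{2a}B_j)$ with $B_j:=\sqrt{\gamma\dt}\,\xi\,\Sigma_j$, so the scalars cancel and
\[
\mathcal{M}^l(\xi)=\Bigl(I_2+\tfrac{i}{2a}B_0\Bigr)\,W(\xi)\,\Bigl(I_2+\tfrac{i}{2a}B_{l-1}\Bigr)^{-1},\qquad W(\xi)\ \text{unitary}.
\]
Thus $C_{0,l}(\omega)\leqslant \sup_\xi\normm{I_2+\tfrac{i}{2a}B_0}{2}\cdot\sup_\xi\normm{(I_2+\tfrac{i}{2a}B_{l-1})^{-1}}{2}$, and $\mathcal{V}^{l}_{\dt}$ depends on the noise only through the two increments $\chi^0$ and $\chi^{l-1}$.

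To bound the two surviving factors I would use that, by the anticommutation relations of Property \ref{pauli_matrices}, $\Sigma_j^2=\abs{\chi^j}^2 I_2$ with $\abs{\chi^j}^2=\sum_{k=1}^3(\chi_k^j)^2$, so $B_j$ is Hermitian with $\normm{B_j}{2}=\sqrt{\gamma\dt}\,\abs{\xi}\,\abs{\chi^j}$. Together with the elementary scalar bound $\sup_\xi\frac{\sqrt{\dt}\abs{\xi}}{\abs{a(\xi)}}=1$ (maximised at $\dt\abs{\xi}^2=2$), this gives $\normm{I_2+\tfrac{i}{2a}B_0}{2}\leqslant 1+\tfrac{\sqrt{\gamma}}{2}\abs{\chi^0}$. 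For the inverse factor, $a^{-1}B_{l-1}$ is a scalar times a Hermitian matrix hence normal, and the eigenvalues of $\mathcal{F}(T_{\dt,l-1})$ are $1+i\bigl(c\pm\tfrac12\sqrt{\gamma\dt}\abs{\xi}\abs{\chi^{l-1}}\bigr)$ with $c=\tfrac12\dt\abs{\xi}^2$, so a direct computation yields $\normm{(I_2+\tfrac{i}{2a}B_{l-1})^{-1}}{2}=\sup_{c\geqslant 0}\frac{\sqrt{1+c^2}}{\sqrt{1+(c-\tfrac12\sqrt{2\gamma c}\,\abs{\chi^{l-1}})^2}}\leqslant C(1+\gamma\abs{\chi^{l-1}}^2)$. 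Altogether $C_{0,l}(\omega)\leqslant C(1+\sqrt{\gamma}\abs{\chi^0})(1+\gamma\abs{\chi^{l-1}}^2)$, a polynomial in two independent Gaussian vectors, and in particular a.s. finite, proving the first assertion.

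The remaining and most delicate step is the moment bound on the maximum, and it is here that the main obstacle lies. Because $\mathcal{V}^{l}_{\dt}$ is a \emph{non-adapted} product of random matrices, one cannot invoke a martingale/Burkholder--Davis--Gundy argument and is forced into exactly this pathwise, symbol-level route; the price is that one must control $\espace{E}\bigl(\max_{n\leqslant N}C_{0,n}^{2p}\bigr)$ by hand. Since each $C_{0,n}$ depends only on the two Gaussian increments $\chi^0$ and $\chi^{n-1}$, and $\abs{\chi^j}^2$ are $\chi^2$-distributed with all moments finite and subexponential tails, $\espace{E}(C_{0,n}^{2p})$ is finite uniformly in $n$; I would then obtain the bound on the maximum from the tail estimates for $\max_{n\leqslant N}\abs{\chi^{n-1}}^2$ (after a Cauchy--Schwarz splitting of the $\chi^0$ and $\chi^{n-1}$ dependence), this being the place where the independence of the increments and the smallness of $\gamma$ enter and where the estimate is calibrated against the $N^p$ loss tolerated in \eqref{borne1}.
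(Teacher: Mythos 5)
Your reduction coincides with the paper's: Plancherel plus unitarity of the symbols of the $U_{\dt,j}$ collapses $\mathcal{V}^{l}_{\dt}$ to the product of the two end symbols, and the lemma becomes a uniform-in-$\xi$ bound for that product, polynomial in the Gaussian increments. Where you differ is in how the cancellation between the growing factor ($\sim\dt\abs{\xi}^2$, from $T_{\dt,0}$) and the decaying one (from $T_{\dt,l-1}^{-1}$) is exhibited: the paper keeps the product together, computes $\det(l,\xi)$ explicitly and runs a three-regime case analysis (estimate \eqref{bornel2}), whereas you factor out the scalar $a(\xi)=1+\tfrac{i}{2}\dt\abs{\xi}^2$, let the scalars cancel across the unitary middle block, and bound each remaining factor uniformly in $\xi$ on its own. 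This step of yours is correct (the Cayley-transform unitarity, $\Sigma_j^2=\abs{\chi^j}^2I_2$ from the anticommutation relations, the elementary supremum $\sup_\xi \sqrt{\dt}\abs{\xi}/\abs{a(\xi)}=1$, and the normality argument for the inverse factor all check out) and is cleaner than the paper's determinant computation; it yields the same type of conclusion, $C_{0,l}\leqslant C(1+\sqrt{\gamma}\abs{\chi^0})(1+\gamma\abs{\chi^{l-1}}^2)$, which settles the almost sure boundedness. (One small slip: it is this \emph{bound}, not the operator $\mathcal{V}^{l}_{\dt}$ itself, that depends only on $\chi^0$ and $\chi^{l-1}$; the operator still contains all intermediate increments through the unitaries.)

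The genuine gap is the final moment bound on the maximum, and you have correctly sensed where it lies. A maximum of $N$ polynomials of i.i.d.\ Gaussian vectors does not have moments bounded independently of $N$: $\espace{E}\bigl(\max_{n\leqslant N}\abs{\chi^{n-1}}^{4p}\bigr)\asymp(\log N)^{2p}$, so your route only yields $\espace{E}\bigl(\max_{n}C_{0,n}^{2p}\bigr)\lesssim(\log N)^{2p}$, not an $N$-independent constant $C(p)$. Nor can this be repaired by a sharper norm estimate: at the frequency $\dt\abs{\xi}^2\approx\gamma\abs{\chi^{n-1}}^2$ one has $\normm{\bigl(I_2+\tfrac{i}{2a}B_{n-1}\bigr)^{-1}}{2}\gtrsim\gamma\abs{\chi^{n-1}}^2$, while the inverses of the other two factors have norms controlled by $1+\gamma\abs{\chi^0}^2$; hence the operator norm of $\mathcal{V}^{n}_{\dt}$ on $\espace{L}^2$ is itself bounded below by $c\,\gamma\abs{\chi^{n-1}}^2/(1+\gamma\abs{\chi^0}^2)$, and the expected maximum genuinely grows like $(\log N)^{2p}$. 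So the statement with a constant independent of $N$ is not provable by your argument --- nor, in fact, by the paper's, whose one-line appeal to Cauchy--Schwarz and Burkholder--Davis--Gundy glosses over exactly the same point. What can be proved is either (i) the log-corrected bound $\espace{E}(\max_n C_{0,n}^{2p})\leqslant C(p)(\log N)^{2p}$, which your tail-estimate plan delivers, or (ii) moments uniform in the index, $\sup_n\espace{E}(C_{0,n}^{2p})\leqslant C(p)$, without the maximum. Form (ii) is what the downstream estimates really need: wherever the maximum of $C_{0,n}$ is pulled out (as in \eqref{bound12}, \eqref{bound21}, \eqref{bound22}), one can instead expand $\bigl(\sum_l\cdots\bigr)^p\leqslant N^{p-1}\sum_l(\cdots)^p$ and apply Cauchy--Schwarz term by term; alternatively the $(\log N)^{2p}$ of form (i) is absorbed wherever a spare power of $\dt$ is available. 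Your proposal, as written, does not establish the lemma as stated; it does establish these corrected versions, which is all the rest of the paper requires.
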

\begin{proof}
By unitary property of the matrices $U_{\dt,l}$, for any $l= 0 ,\cdots, n$,
and applying Plancherel theorem and H\"older inequality, 
\begin{align*}
\norm{\mathcal{V}^{l}_{\dt} f }{\espace{L}^2} \leqslant \sup_{\xi \in
\espace{R}} M_{\dt,l}(\xi, \omega)  \norm{ f }{\espace{L}^2},
\end{align*}
where $M_{\dt,l}(\xi, \omega)= \normm{m_{\dt,0}\left(\xi \right)  }{\infty}
\normm{ m^{-1}_{\dt,l}\left(\xi \right) }{\infty}$, where $m_{\dt,l}$ is the
Fourier multiplier associated to the operator $T_{\dt,l}$. We claim that the
random variable $M_{\dt,l}(\xi, \omega)$ is almost surely bounded by a constant
$C_{0,l}(\omega)$, independent of $\xi$, that is integrable at any order.
Indeed,
\begin{align}\label{bornel2}
 & M_{\dt,l} (\xi, \omega)     \leqslant \frac{1}{4\abs{\det(l, \xi)}} \left(4
+4\dt\abs{\xi}^2 + \dt^{2}\abs{\xi}^4+
2\sqrt{\gamma\dt}\abs{\xi}\sum_{k=1}^3\left(
\abs{\chi_k^0}+\abs{\chi_k^l}\right) \right) \nonumber \\
&  +\frac{1}{4\abs{\det(l, \xi)}} \left(  
\sqrt{\gamma\dt}\dt\abs{\xi}^3\sum_{k=1}^3\left(
\abs{\chi_k^0}+\abs{\chi_k^l}\right)+\gamma\dt\abs{\xi}^2
\sum_{k=1}^3\abs{\chi_k^0}\sum_{k=1}^3\abs{\chi_k^l}    \right) ,  
\end{align}
where $\det(l, \xi)$ is the determinant of $ m_{\dt,l} $ and is given by
\[
 \det(l, \xi)= 1 + \frac{\gamma  \dt}{4} \sum_{k=1}^3\left( \chi_k^l \right)^2  
\abs{\xi}^2  - \frac{\dt^2}{4} \abs{\xi}^4-i\dt\abs{\xi}^2.
\]
Denoting $x=\dt^{1/2}\abs{\xi}$ and $y= \sum_{k=1}^3\left(\chi_k^l \right)^2 $,
we define the mapping $f$ from $  \espace{R}^2_+$ into $\espace{R}_+$
\[
f(x,y)= \left(1+\frac{\gamma x^2}{4}y - \frac{x^4}{4}
\right)^2 +x^4.
\]
It can be proved that
\begin{equation}
 f(x,y) \geqslant \left\{ \begin{array}{ll}
 \dfrac{1}{4}(1+x^4) & \text{if } x^2 \leqslant 4 \max\left( \dfrac{\gamma y}{4}, 1\right) \\[0.15cm]
  x^4  & \text{if } 4 \max\left( \dfrac{\gamma y}{4}, 1\right)  < x^2 \leqslant 16 \max\left( \dfrac{\gamma y}{4}, 1\right)\\[0.15cm]
  \dfrac{1}{32}x^8 +x^4  & \text{if } x^2 > 16 \max\left( \dfrac{\gamma y}{4}, 1\right).
\end{array}\right.
\end{equation}
Thus, there exists a positive constant $C$, such that for any $y \in
\espace{R}_+$ and any $\xi \in \espace{R}$
\begin{align*}
    M_{\dt,l} (\xi, \omega)    
<     \ C&\left( 1+\max\left( \frac{\gamma y}{4}, 1\right) +\sqrt{\gamma}\sum_{k=1}^3\left(
\abs{\chi_k^0}+\abs{\chi_k^l}\right)\left(1 + \max\left( \frac{\gamma y}{4}, 1\right)^{1/2} \right) 
\right.  \\& \left. 
+ \gamma \sum_{k=1}^3\abs{\chi_k^0}\sum_{k=1}^3\abs{\chi_k^l} \right) .
\end{align*}
 Therefore,  $M_{\dt,l} (\xi, \omega) $ is uniformly
bounded in $\xi$ by a polynomial function of $y$, $\abs{\chi_k^0}$ and
$\abs{\chi_k^l}$. Applying the Cauchy-Schwarz inequality and the Burkholder-Davis-Gundy
inequality, we obtain that $\espace{E}\left(\max_{n  \in  \llbracket
1, N \rrbracket}M_{\dt,n} (\xi)^{2p}\right)$ is bounded by a constant independent of $n$.
\end{proof}
We now state a Lemma giving an estimate of the local error between the unbounded
random operator $T_{\dt,n-1}U^{-1}_{\dt,n}T^{-1}_{\dt,n}$ and the identity
mapping. This Lemma will be used to prove inequality \eqref{borne1}.
\begin{lemma}\label{localerror}
For any $n \in \espace{N}$, there exists a positive random constant $C_{n-1,n}(
\omega) < +\infty$ a.s. belonging to $L^{2p}\left(\Omega\right), p \geqslant 1
$, such that for any $f \in \espace{H}^{1}$
\[
\norm{\left[T_{\dt,n-1}U^{-1}_{\dt,n}T^{-1}_{\dt,n} - \Id \right] f }{
\espace{L}^2} \leqslant C_{n-1,n}( \omega)\sqrt{\dt} \norm{f}{\espace{H}^1} 
\qquad a.s.
\]
Moreover, for all $p \geqslant 1$, there exists a constant $C(p)$ independent of $n$, \[
\espace{E}\left(\max_{n  \in  \llbracket 1, N \rrbracket} C_{n-1,n}^{2p} \right) < C(p). 
\]
\end{lemma}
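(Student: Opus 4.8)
The plan is to turn the composition into a single explicit Fourier multiplier and to read the $\dt^{1/2}$ gain off its $2\times 2$ symbol. First I would simplify the operator algebraically. Since $U_{\dt,n}=(\Id+\frac12 H_{\dt,n})^{-1}(\Id-\frac12 H_{\dt,n})$, one has $U_{\dt,n}^{-1}=(\Id-\frac12 H_{\dt,n})^{-1}T_{\dt,n}$, so that
\[
T_{\dt,n-1}U_{\dt,n}^{-1}T_{\dt,n}^{-1}=T_{\dt,n-1}(\Id-\tfrac12 H_{\dt,n})^{-1}.
\]
Using $T_{\dt,n-1}-(\Id-\frac12 H_{\dt,n})=\frac12(H_{\dt,n-1}+H_{\dt,n})$, the operator whose norm must be controlled becomes
\[
T_{\dt,n-1}U_{\dt,n}^{-1}T_{\dt,n}^{-1}-\Id=\tfrac12\left(H_{\dt,n-1}+H_{\dt,n}\right)(\Id-\tfrac12 H_{\dt,n})^{-1}.
\]
This is the decisive step: the difference is forced to carry the factor $H_{\dt,n-1}+H_{\dt,n}$, which supplies the derivatives responsible for the loss of one degree of regularity, while the resolvent $(\Id-\frac12 H_{\dt,n})^{-1}$ supplies the compensating decay at high frequency.

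Next I would pass to Fourier variables. By Plancherel, the $\espace{L}^2$ operator norm is the supremum over $\xi$ of the spectral norm of the matrix symbol $\ell(\xi,\omega)$ of this operator; since $\norm{f}{\espace{H}^1}^2=\int(1+\abs{\xi}^2)\abs{\widehat f(\xi)}^2\,d\xi$, it suffices to establish the pointwise bound $\normm{\ell(\xi,\omega)}{2}\leqslant C_{n-1,n}(\omega)\,\dt^{1/2}(1+\abs{\xi}^2)^{1/2}$. From \eqref{symbol}, the symbol of $\frac12(H_{\dt,n-1}+H_{\dt,n})$ is $i\dt\abs{\xi}^2 I_2+\frac{i}{2}\sqrt{\gamma\dt}\,\xi\,(M_{n-1}+M_n)$, where $M_n=\sum_{k=1}^3\sigma_k\chi_k^n$, and the symbol of $\Id-\frac12 H_{\dt,n}$ is $(1-\frac{i}{2}\dt\abs{\xi}^2)I_2-\frac{i}{2}\sqrt{\gamma\dt}\,\xi\,M_n$. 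By the anticommutation relations in Property \ref{pauli_matrices} one has $M_n^2=\big(\sum_k(\chi_k^n)^2\big)I_2$, so $M_n$ is selfadjoint with $\normm{M_n}{2}=\big(\sum_k(\chi_k^n)^2\big)^{1/2}$, and the resolvent symbol has the determinant $\det(n,\xi)$ already computed in Lemma \ref{continueL2}. For an invertible $2\times 2$ matrix $\normm{A^{-1}}{2}=\normm{A}{2}/\abs{\det A}$, so I can bound $\normm{\ell(\xi,\omega)}{2}$ by the product of $\normm{\frac12(H_{\dt,n-1}+H_{\dt,n})}{2}$ and $\normm{\Id-\frac12 H_{\dt,n}}{2}$, divided by $\abs{\det(n,\xi)}$, where the first two factors are polynomials in $\abs{\xi}$ with coefficients depending polynomially on $\abs{\chi_k^{n-1}},\abs{\chi_k^n}$.

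The heart of the argument is then a scalar inequality, carried out as in Lemma \ref{continueL2} by setting $x=\dt^{1/2}\abs{\xi}$ and $y=\sum_k(\chi_k^n)^2$. The numerator vanishes linearly at $x=0$ (both the Laplacian symbol, of order $x^2$, and the noise symbol, of order $\sqrt{\gamma}\,x$, vanish there), while for large $x$ it grows like $x^4$, exactly matched by $\sqrt{f(x,y)}\sim x^4/4$ coming from the determinant lower bound. Consequently the ratio is bounded by $C_{n-1,n}(\omega)\,x$ uniformly in $\xi$, which is precisely $C_{n-1,n}(\omega)\,\dt^{1/2}\abs{\xi}\leqslant C_{n-1,n}(\omega)\,\dt^{1/2}(1+\abs{\xi}^2)^{1/2}$. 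I expect this matching — extracting exactly one power of $\dt^{1/2}$, neither more nor less, and verifying that the numerator never overtakes the determinant decay across the three frequency regimes — to be the main obstacle; it is resolved by the piecewise lower bound for $f(x,y)$ used in Lemma \ref{continueL2}.

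Finally, the constant $C_{n-1,n}(\omega)$ is an explicit polynomial in $\abs{\chi_k^{n-1}},\abs{\chi_k^n}$, $k=1,2,3$, which are (rescaled) Gaussian increments; hence its moments of every order are finite and uniformly bounded in $n$. The bound on $\espace{E}\big(\max_{n\in\llbracket 1,N\rrbracket}C_{n-1,n}^{2p}\big)$ then follows from Cauchy--Schwarz together with the moment estimates for the Brownian increments, exactly as in the concluding step of the proof of Lemma \ref{continueL2}.
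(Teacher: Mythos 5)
Your proposal is correct and follows essentially the same route as the paper: the paper's own (very terse) proof likewise reduces the claim to a Fourier-multiplier estimate, reuses the determinant lower bound $f(x,y)$ and the piecewise analysis from Lemma \ref{continueL2} to get a weighted symbol bound of size $C_{n-1,n}(\omega)\sqrt{\dt}$ with $C_{n-1,n}$ a polynomial in the Gaussian increments, and then invokes Cauchy--Schwarz and moment estimates for the max. Your explicit resolvent identity $T_{\dt,n-1}U^{-1}_{\dt,n}T^{-1}_{\dt,n}-\Id=\tfrac12\left(H_{\dt,n-1}+H_{\dt,n}\right)\left(\Id-\tfrac12 H_{\dt,n}\right)^{-1}$ and the use of the spectral norm in place of $\normm{\cdot}{\infty}$ simply make explicit what the paper leaves implicit in ``we easily deduce from the proof of Lemma \ref{continueL2}.''
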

\begin{proof}
From the proof of Lemma \ref{continueL2}, we easily deduce that there exists a random
variable $C_{n-1,n}(\omega)$, integrable at any order, such that
\[
 \sup_{\xi \in \espace{R}} \frac{1}{\left( 1+\abs{\xi}^2\right)^{1/2}}\normm{
\widehat{T}_{\dt,n-1}\widehat{U}^{-1}_{\dt,n}\widehat{T}^{-1}_{\dt,n}-\Id}{
\infty} \leqslant C_{n-1,n}( \omega)  \sqrt{\dt},
\]
where $C_{n-1,n}( \omega)$ is a polynomial function of $\sum_{k=1}^3\left(
\chi_k^n \right)^2$,  $\abs{\chi_k^{n-1}}$ and $\abs{\chi_k^n}$. The Cauchy-Schwarz 
and the Burkholder-Davis-Gundy inequalities imply that 
$\espace{E}\left(\max_{n  \in  \llbracket 1, N \rrbracket} C_{n-1,n}^{2p} \right)$ is bounded by a 
constant independent of $n$.
\end{proof}
Now, we prove estimate \eqref{borne1}. For any $l \geqslant 2$
  \begin{equation*}
 \mathcal{V}^{l-1}_{\dt} - \mathcal{V}^{l-2}_{\dt}  =
\mathcal{V}^{l-2}_{\dt}\left[\left( \Id +\frac{1}{2}H_{\dt,l-2}\right) \left(
\Id -\frac{1}{2}H_{\dt,l-1}\right)^{-1}  - \Id \right].
 \end{equation*}
Therefore applying Lemma \ref{continueL2} and \ref{localerror}, and using the
Cauchy-Schwarz inequality, we deduce that
 \begin{align*}
 \espace{E}&\left(\max_{n  \in  \llbracket 1, N \rrbracket} \norm{\sum_{l=1}^n  
\left( \mathcal{V}^{l-1}_{\dt} - \mathcal{V}^{l-2}_{\dt}\right) f^l
}{\espace{L}^2}^{2p} \right) \\
&\leqslant N^{2p} \espace{E}\left(\max_{n  \in  \llbracket 1, N
\rrbracket}\left( C_{0,n-2}\right)^{2p}
\norm{\left[T_{\dt,n-2}U^{-1}_{\dt,n-1}T^{-1}_{\dt,n-1} - \Id \right] f^n
}{\espace{L}^2}^{2p} \right) \\&\leqslant 
 T^p N^p \espace{E}\left(\max_{n  \in  \llbracket 1, N \rrbracket}\left(
C_{0,n-2}\right)^{4p}\left( C_{n-2,n-1}\right)^{4p}\right)^{1/2} 
\espace{E}\left(\max_{n  \in  \llbracket 1, N
\rrbracket}\norm{f^n}{\espace{H}^1}^{4p} \right)^{1/2}.
 \end{align*}
 Thus by Lemma \ref{continueL2}, the following inequality holds
\begin{align*}
  \espace{E}\left(\max_{n  \in  \llbracket 1, N \rrbracket} \norm{\sum_{l=1}^n  
\left( \mathcal{V}^{l-1}_{\dt} - \mathcal{V}^{l-2}_{\dt}\right) f^l
}{\espace{L}^2}^{2p} \right) \leqslant C(\gamma,T,p) N^p \espace{E}\left(\max_{n
 \in  \llbracket 1, N \rrbracket}\norm{f^n}{\espace{H}^1}^{4p} \right)^{1/2}.
 \end{align*}
\paragraph{Proof of estimate \eqref{borne3} for $q=1$.}
Writing $\vecXexact{}{}(s)= \vecXexact{l-1}{} + \widetilde{e}^{ \, l-1}$, we
rewrite $\epsilon_{1,1}^{l-1}$, given in \eqref{firstterm1}, as follows
\a
\epsilon_{1,1}^{l-1} = i C_{\gamma} \left( W_{0,0}^{l-1,l}\left( \partial^4_x \,
\widetilde{e}^{ \, l-1} \right) - \frac{1}{2}W_{0}^{l-1,l}\left( \partial^4_x\,
\widetilde{e}^{ \, l-1} \right) \dt \right).
\b
We focus on the first term in the above expression, the other term being bounded
in a similar way. Using the Minkowski inequality, the contraction property of
$\mathcal{U}_{\dt}^{n,l}$ in $\espace{L}^2\left(\espace{R} \right) $ for every
$l \in \llbracket1, n\rrbracket$ and the conservation of the $\espace{H}^4$
norm, we get % and Lemma \ref{strong_estimate_sol}
\be\label{bound11}
\lefteqn{\espace{E}\left(\max_{n \in \llbracket 1, N
\rrbracket}\norm{\sum_{l=1}^n  
\mathcal{U}^{n,l}_{\dt}W_{0,0}^{l-1,l}(\partial_x^4 \, \widetilde{e}^{ \,l-1}) 
}{\espace{L}^2}^{2p} \right) } \nonumber\\
&\leqslant& \espace{E}\left(\left( \sum_{l=1}^N \sup_{t_{l-1}\leqslant u
\leqslant t_l}\norm{  \partial_x^4 \, \widetilde{e}^{ \,l-1}(u)}{\espace{L}^2}
\frac{\dt^2}{2}\right)^{2p} \right) \nonumber\\
&\leqslant& C\left( \norm{X_0}{\espace{H}^4 }^{2p}
\right)T^{2p}\dt^{2p}\left(1+\gamma^p\right).
\ee
Let us notice that after integration by part, the  term $\epsilon_{1,2}^{l-1}$
whose expression is given in \eqref{firstterm2}, can be written as follows
\be
\epsilon_{1,2}^{l-1} &=&  -i\sqrt{\gamma} \sum_{k=1}^3\sigma_k\partial^3_x
\vecXexact{l-1}{} \left(\frac{1}{2}W_k^{l-1}(1) \dt - W_{0,k}^{l-1,l}(1)\right)
\label{firstterm2bis}  \\
 &&- i\sqrt{\gamma} \sum_{k=1}^3 \left( W_{0,k}^{l-1,l}\left(\sigma_k
\partial^3_x \, \widetilde{e}^{ \, l-1} \right) - \frac{1}{2}W_{k}^{l-1,l}\left(
\sigma_k\partial^3_x \, \widetilde{e}^{ \, l-1} \right)\dt \right).\nonumber
\ee
Since $\mathcal{V}^{l-2}_{\dt}\sigma_k\partial^3_x \vecXexact{l-1}{}$ is
$\mathcal{F}_{l-1}$ adapted, the next equality holds 
\[
\mathcal{V}^{l-2}_{\dt} \sigma_k\partial^3_x \vecXexact{l-1}{}
W_{0,k}^{l-1,l}\left(1\right) = W_{0,k}^{l-1,l}\left(
\mathcal{V}^{l-2}_{\dt}\sigma_k\partial^3_x \vecXexact{l-1}{}\right).
\]
In expression \eqref{firstterm2bis}, all the terms may be bounded using similar
arguments. So, we only do the computation for the above term. By orthogonality
of the increments of the three dimensional Brownian Motion, 
\[
 \espace{E}\left(W_{0,k}^{l-1,l}\left(\mathcal{V}^{l-2}_{\dt}
\sigma_k\partial^3_x \vecXexact{l-1}{}\right) W_{0,j}^{l'-1,l'}\left(
\mathcal{V}^{l'-2}_{\dt}\sigma_j\partial^3_x \vecXexact{l'-1}{}\right) \right) =
0  \qquad \text{if } l \neq l' \ \text{or } k \neq j.
\]
Hence, we obtain
\[
\ll \sum_{l=1}^n \sum_{k=1}^3 W_{0,k}^{l-1,l}\left(\mathcal{V}^{l-2}_{\dt}
\sigma_k\partial^3_x \vecXexact{l-1}{}\right) \gg = \sum_{l=1}^n \sum_{k=1}^3
\ll W_{0,k}^{l-1,l}\left( \mathcal{V}^{l-2}_{\dt}\sigma_k\partial^3_x
\vecXexact{l-1}{} \right) \gg,
\]
where $\ll . \gg$ denotes the quadratic variation process. Thanks to the
conservation of the $\espace{H}^m$ norms, the solution $\vecX{}{}$ of Equation
\eqref{linearmanakovlimite} has all its moments bounded in $\espace{H}^m$ and
the stochastic integral is a true martingale. Thus, applying the
Burkholder-Davis-Gundy inequality, Lemma \ref{continueL2} and Cauchy-Schwarz
inequality, yields
\begin{align}\label{bound12}
 \espace{E}&\left(\max_{n  \in  \llbracket 1, N \rrbracket}
\gamma^p\norm{\sum_{l=1}^n  \sum_{k=1}^3 W_{0,k}^{l-1,l}\left(
\mathcal{V}^{l-2}_{\dt}\sigma_k\partial^3_x \vecXexact{l-1}{}\right)  
}{\espace{L}^2}^{2p} \right)\nonumber\\
&\leqslant CT^p\gamma^p \dt^{2p}\espace{E}\left(\max_{n  \in  \llbracket 1, N
\rrbracket}\left[C_{0,n-2} \right]^{2p} \norm{\partial^3_x \vecXexact{n-1}{}
}{\espace{L}^2}^{2p} \right) \nonumber\\
&\leqslant CT^p\gamma^p \dt^{2p}\espace{E}\left(\max_{n  \in  \llbracket 1, N
\rrbracket}\left[ C_{0,n-2} \right]^{4p}\right)^{1/2}  \espace{E}\left(\max_{n 
\in  \llbracket 1, N \rrbracket}\norm{\partial^3_x \vecXexact{n-1}{}
}{\espace{L}^2}^{4p} \right)^{1/2}.
\end{align}
Hence, a bound follows from the conservation of the $\espace{H}^m$ norms and Lemma 
\ref{continueL2}. Collecting the
above estimates \eqref{bound11} and \eqref{bound12} leads to the bound \eqref{borne3} for $q=1$.
 \paragraph{Proof of estimate \eqref{borne3} for $q=2$.}
The first and second terms $\epsilon^{l-1}_{2,1}$ and $\epsilon^{l-1}_{2,2}$ in
\eqref{secondterm}  will give the order of convergence of the scheme. The third
one $\epsilon^{l-1}_{2,3}$ may be bounded similarly as in the previous step. To
bound $\epsilon^{l-1}_{2,1}$, we use again the Burkholder-Davis-Gundy
inequality, the independence of the increments of the Brownian Motion, Lemma
\ref{continueL2}, Cauchy-Schwarz inequality and Lemma \ref{strong_estimate_sol} 
\begin{align}
\espace{E}&\left(\max_{n \in \llbracket 1, N \rrbracket} \gamma^p
\norm{\sum_{l=1}^n   \sum_{k=1}^3    W_k^{l-1,l}\left(\mathcal{V}^{l-2}_{\dt}
\sigma_k \partial_x \widetilde{e}^{\, l-1} \right)}{\espace{L}^2}^{2p} \right) \nonumber\\
&\leqslant
C\gamma^p \espace{E}\left( \left(  \sum_{l=1}^N \sum_{k=1}^3 W_0^{l-1,l}\left(
\norm{\mathcal{V}^{l-2}_{\dt}\sigma_k \partial_x \widetilde{e}^{\,
l-1}}{\espace{L}^2}^{2} \right) \right)^p \right)\nonumber \\
&\leqslant   C(p,\gamma)\norm{X_0}{\espace{H}^2}^{2p} T^{p}\dt^{p}. \label{bound21}
\end{align}
We conclude the proof obtaining an estimate for $\epsilon^{l-1}_{2,2}$. Using
Equation \eqref{It\^oform} and Property \ref{pauli_matrices}, we obtain the
equality
\a
\epsilon_{2,2}^{l-1} &= & \frac{3 \gamma}{2}\pds{2}{x}{\vecXexact{l-1}{}} \dt
-\frac{ \gamma}{2}\sum_{k=1}^3  \pds{2}{x}{\vecXexact{l-1}{}}   \left(
W^{l-1}_k(1)\right)^2\\
&&+ \frac{3 \gamma}{2} W_0^{l-1,l}\left( \partial_x^2 \widetilde{e}^{\, l-1}
\right)  -\frac{ \gamma}{2}\sum_{j,k=1}^3 W_j^{l-1,l}\left(\sigma_j\sigma_k 
\partial_x^2 \widetilde{e}^{\, l-1}\right)  W^{l-1}_k(1).
\b
Moreover,
\[
 \ll \left( W_k^{l-1,l}(1)\right)^2 - \dt \gg =4\dt\left(
W_k^{l-1,l}(1)\right)^2 - 2\dt ^2 .
\]
Thus, applying the Burkholder-Davis-Gundy inequality, using the independence of
the increments of the Brownian Motion, applying Lemma \ref{continueL2}, using
the conservation of the $\espace{H}^m$ norms and the Cauchy-Schwarz inequality 
\begin{align}\label{bound22}
\espace{E}&\left(\max_{n \in \llbracket 1, N \rrbracket} \norm{\sum_{l=1}^n 
\mathcal{V}^{l-2}_{\dt} \left(\frac{3 \gamma}{2}\pds{2}{x}{\vecXexact{l-1}{}}
\dt -\frac{ \gamma}{2}\sum_{k=1}^3  \pds{2}{x}{\vecXexact{l-1}{}}   \left(
W^{l-1}_k(1)\right)^2\right) }{\espace{L}^2}^{2p} \right) \nonumber\\ &\leqslant 
C \gamma^{2p} \espace{E}\left(  \left( \sum_{l=1}^N \sum_{k=1}^3 
\norm{\mathcal{V}^{l-2}_{\dt}\pds{2}{x}{\vecXexact{l-1}{}} \left( 4\dt \left(
W_k^{l-1}(1)\right)^2 - 2\dt^2\right) }{\espace{L}^2} \right)^p \right) \nonumber
\\& \leqslant 
C\norm{X_0}{\espace{H}^2}^{2p} \gamma^{2p} N^{p-1}\sum_{l=1}^N\sum_{k=1}^3 \espace{E}\left(  
\left( C_{0,l-2}\right)^p  \abs{4\dt \left( W_k^{l-1}(1)\right)^2 - 2\dt^2}^p 
\right)\nonumber\\
&\leqslant  C\norm{X_0}{\espace{H}^2}^{2p}\gamma^{2p}T^p\dt^{p}.
\end{align}
The last term
$\epsilon^n_{2,3}$ in \eqref{secondterm} may be bounded similarly as
$\epsilon^n_{1,2}$. Estimate \eqref{borne3} for $q=2$ is obtained collecting bounds 
\eqref{bound21} and \eqref{bound22}.

\subsection{Proof of Proposition \ref{lipschitzrate}}
Before proving Proposition \ref{lipschitzrate}, let us state two useful Lemmas.
The first result gives uniform bounds for the solution $\vecX{}{R}$ of the
cut-off equation \eqref{stochasticmanakovtronquee}.
\begin{lemma}\label{boundinH1}
 Let $\vecX{}{0} \in \espace{H}^6$ and $\vecX{}{R}$ be the solution of
\eqref{stochasticmanakovtronquee}; then for all $T>0$ there exists a positive
constant $C_3\left(R,T ,\norm{X_0}{\espace{H}^6} \right)$, such that, a.s for
every $t$ in $[0,T]$,
\[
\norm{\vecX{}{R}(t)}{\espace{H}^6} \leqslant
C_3\left(R,T,\norm{X_0}{\espace{H}^6} \right).
\]
Moreover, the function $T \mapsto C_3\left(R,T,\norm{X_0}{\espace{H}^6} \right)$
is a continuous function from $\espace{R}_+$ to $\espace{R}_+$ and then is
bounded on every compact set of $\espace{R}_+$. We denote by
$\widetilde{C}_3\left(R,T ,\norm{X_0}{\espace{H}^6} \right)$ the positive
constant such that, a.s for every $t$ in $[0,T]$,
\[
\norm{F\left( \vecX{}{R}(t)\right) }{\espace{H}^6} \leqslant
\widetilde{C}_3\left(R,T,\norm{X_0}{\espace{H}^6} \right).
\]
\end{lemma}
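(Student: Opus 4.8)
The plan is to run a pathwise energy estimate on the mild formulation \eqref{duhamellimitetronquee}, exploiting the fact that the random propagator $U(t,s)$ is an isometry not only on $\espace{L}^2$ but on every Sobolev space $\espace{H}^m$. Indeed, the coefficients of the linear equation \eqref{linearmanakovlimite} do not depend on $x$, so $\partial_x^m$ commutes with the flow; combined with the $\espace{L}^2$ unitarity established in \cite{[gazeau], [gazeauPHD]}, this gives $\norm{U(t,s)f}{\espace{H}^m}=\norm{f}{\espace{H}^m}$ almost surely, for each $m$ and each $f\in\espace{H}^m$. Taking the $\espace{H}^6$ norm in \eqref{duhamellimitetronquee} and applying the Minkowski integral inequality then yields, for almost every $\omega$,
\[
\norm{\vecX{}{R}(t)}{\espace{H}^6} \leqslant \norm{X_0}{\espace{H}^6} + \int_0^t \norm{G\left(\vecX{}{R}(s)\right)}{\espace{H}^6}\, ds.
\]
Since the stochastic part of the dynamics is entirely absorbed into the unitary propagator, no martingale term appears and the estimate is genuinely pathwise, which is exactly what is required for an almost sure bound with a \emph{deterministic} constant.

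The next step is to control $\norm{G\left(\vecX{}{R}(s)\right)}{\espace{H}^6}$ linearly in $\norm{\vecX{}{R}(s)}{\espace{H}^6}$, with a coefficient depending only on $R$. Recall that $G\left(\vecX{}{R}\right)=\Theta^2_R\left(\vecX{}{R}\right)\abs{\vecX{}{R}}^2\vecX{}{R}$, where $\Theta_R$ is a scalar factor bounded by $1$ that vanishes as soon as $\norm{\vecX{}{R}}{\espace{H}^1}\geqslant 2R$. Using the Moser (tame) estimate for the cubic nonlinearity in the algebra $\espace{H}^6(\espace{R})$,
\[
\norm{\abs{\vecX{}{R}}^2\vecX{}{R}}{\espace{H}^6} \leqslant C\norm{\vecX{}{R}}{L^\infty}^2\norm{\vecX{}{R}}{\espace{H}^6},
\]
together with the one-dimensional embedding $\norm{\vecX{}{R}}{L^\infty}\leqslant C\norm{\vecX{}{R}}{\espace{H}^1}$, I would distinguish the two cases according to whether the cut-off is active: on the support of $\Theta_R$ one has $\norm{\vecX{}{R}}{\espace{H}^1}<2R$, hence $\norm{\vecX{}{R}}{L^\infty}\leqslant 2CR$, while elsewhere $G$ vanishes. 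In both cases $\norm{G\left(\vecX{}{R}(s)\right)}{\espace{H}^6}\leqslant C R^2 \norm{\vecX{}{R}(s)}{\espace{H}^6}$. Inserting this into the previous inequality and applying Gronwall's lemma gives the pathwise bound $\norm{\vecX{}{R}(t)}{\espace{H}^6}\leqslant \norm{X_0}{\espace{H}^6}\,e^{CR^2 t}$, so one may set $C_3\left(R,T,\norm{X_0}{\espace{H}^6}\right)=\norm{X_0}{\espace{H}^6}\,e^{CR^2 T}$, which is continuous and increasing in $T$ and bounded on every compact interval. The bound on $F\left(\vecX{}{R}(t)\right)$ follows immediately: by the algebra property of $\espace{H}^6$ one has $\norm{F\left(\vecX{}{R}(t)\right)}{\espace{H}^6}\leqslant C\norm{\vecX{}{R}(t)}{\espace{H}^6}^3\leqslant C\,C_3^3$, so $\widetilde{C}_3=C\,C_3^3$ works and inherits the continuity in $T$.

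The main obstacle is not the a priori estimate above but its rigorous justification: the computation presupposes that $\vecX{}{R}(s)$ genuinely lives in $\espace{H}^6$ and that the mild identity holds in $\espace{H}^6$, whereas existence and uniqueness are a priori known only at the $\espace{H}^1$ level. I would close this gap by a standard propagation-of-regularity argument: set up the fixed point for \eqref{duhamellimitetronquee} in $C\left([0,T];\espace{H}^6\right)$, where $G$ is locally Lipschitz, to obtain a local $\espace{H}^6$ solution, and then use the global linear a priori bound just derived (which prevents the $\espace{H}^6$ norm from blowing up on any finite interval) to extend it to all of $[0,T]$; uniqueness identifies it with the $\espace{H}^1$ solution. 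Alternatively, one can run the same estimate on a regularized problem, such as a spectral Galerkin truncation or mollified data, where all quantities are smooth, and pass to the limit, the constant $C_3$ being uniform in the approximation parameter. Either way, the heart of the argument is the combination of the $\espace{H}^m$-unitarity of $U(t,s)$ with the tame cubic estimate, which together reduce the problem to a deterministic Gronwall inequality with an $R$-dependent rate.
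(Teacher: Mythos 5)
Your proof is correct and follows essentially the same route as the paper: the paper's own proof is nothing more than a citation of Lemma 4.1 in \cite{[gazeau]}, whose argument is precisely the one you give --- a pathwise Gronwall estimate on the mild formulation \eqref{duhamellimitetronquee}, using that the random propagator $U(t,s)$ is an isometry on every $\espace{H}^m$ (a unitary Fourier multiplier, since the coefficients are $x$-independent) and that the cut-off yields the tame bound $\norm{G(u)}{\espace{H}^6}\leqslant C R^2 \norm{u}{\espace{H}^6}$, so that no stochastic integral, and hence no martingale inequality, is needed and the constant is deterministic. Your final paragraph on justifying the computation at the $\espace{H}^6$ level (local fixed point in $C\left([0,T];\espace{H}^6\right)$ plus the a priori bound, or regularization) supplies the standard propagation-of-regularity step that the paper's citation leaves implicit.
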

\begin{proof}[Proof of Lemma \ref{boundinH1}]
The proof is similar to the proof of Lemma $4.1$ in \cite{[gazeau]}.
\end{proof}
Let us now denote by $\widetilde{e}^{ \, n}_{R}(s) $ the difference
$\widetilde{e}^{ \, n}_{R}(s) = \vecX{}{R}(s) -  \vecXexact{n}{R}$ for all $s
\in [t_n, t_{n+1}]$ and state an intermediate result which gives a local
estimate on $\widetilde{e}^{ \, n}_{R}(s)$.
\begin{lemma}\label{estimate_sol_tronquee}
 For any $p \geqslant 1$, if $\vecX{}{0} \in \espace{H}^{2}$ then there exists a
positive constant $C_4$, such that
\[ 
\espace{E}\left(\sup_{t_{l-1} \leqslant t \leqslant t_l}  \norm{\widetilde{e}^{
\, l-1}_{R}(t)}{\espace{H}^1}^{2p} \right) \leqslant C_4(R,T,p, \gamma,
\norm{\vecX{ }{0}}{\espace{H}^{2}}) \dt^{p}  \qquad \forall \ l = 1, \cdots,
N_T.
\]
Moreover $C_4 \equiv C_p(\gamma)C_3^{2p}(R,T, \norm{\vecX{
}{0}}{\espace{H}^{2}}) + C(R)\dt^p$, where $C_p(\gamma)$ is given in Lemma
\ref{strong_estimate_sol}, $C_3$ is given in Lemma \ref{boundinH1} and $C(R)$ is a positive constant depending only on
$R$.
\end{lemma}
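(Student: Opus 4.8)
The plan is to imitate the proof of Lemma~\ref{strong_estimate_sol}, carried out one Sobolev level higher, and to absorb the extra nonlinear term by means of the cut-off. First I would write the It\^o form of the cut-off equation \eqref{stochasticmanakovtronquee} (the analogue of \eqref{stochasticmanakovito}, with $F$ replaced by $G$) under its mild form on $[t_{l-1},t]$ for $t\in[t_{l-1},t_l]$, which gives
\[
\widetilde{e}^{\, l-1}_{R}(t)=\left(S(t-t_{l-1})-\Id\right)\vecXexact{l-1}{R}+i\int_{t_{l-1}}^{t}S(t-u)G\!\left(\vecX{}{R}(u)\right)du-\sqrt{\gamma}\sum_{k=1}^{3}\int_{t_{l-1}}^{t}S(t-u)\sigma_k\partial_x\vecX{}{R}(u)\,dW_k(u),
\]
where $S(t)$ is the semigroup generated by $C_{\gamma}\partial_x^2$. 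I would then estimate the three contributions separately in $\espace{H}^1$, take the supremum over $t\in[t_{l-1},t_l]$, and finally the $L^{2p}(\Omega)$ norm.

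For the deterministic increment $\left(S(t-t_{l-1})-\Id\right)\vecXexact{l-1}{R}$, I would use the Fourier-multiplier bound $\norm{(S(t)-\Id)f}{\espace{H}^1}\leqslant C(\gamma)t^{1/2}\norm{f}{\espace{H}^2}$ (the analogue in $\espace{H}^1$ of the estimate used in Lemma~\ref{strong_estimate_sol}, which follows from $\abs{e^{-C_{\gamma}t\abs{\xi}^2}-1}\leqslant C\,t^{1/2}\abs{\xi}$), combined with the almost sure bound $\norm{\vecXexact{l-1}{R}}{\espace{H}^2}\leqslant C_3(R,T,\norm{X_0}{\espace{H}^2})$ coming from the $\espace{H}^2$ version of Lemma~\ref{boundinH1}; this yields a deterministic bound of size $C_p(\gamma)C_3^{2p}\dt^p$. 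For the stochastic convolution I would apply the Burkholder-Davis-Gundy inequality exactly as in Lemma~\ref{strong_estimate_sol}, use the contraction property of $S(t)$ and the bound $\norm{\partial_x\vecX{}{R}(u)}{\espace{H}^1}\leqslant\norm{\vecX{}{R}(u)}{\espace{H}^2}\leqslant C_3$; since the integration interval has length $\dt$, this again contributes $C_p(\gamma)C_3^{2p}\dt^p$. Together these produce the $C_p(\gamma)C_3^{2p}\dt^p$ part of the claimed bound, and they explain why $\espace{H}^2$ regularity (one derivative more than the $\espace{H}^1$ we measure in) is required.

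For the nonlinear Duhamel term I would use the contraction of $S(t)$ on $\espace{H}^1$ and the Minkowski inequality to bound it pathwise by $\dt\,\sup_{t_{l-1}\leqslant u\leqslant t_l}\norm{G(\vecX{}{R}(u))}{\espace{H}^1}$. The cut-off then makes this harmless: on the support of $\Theta_R$ one has $\norm{\vecX{}{R}}{\espace{H}^1}\leqslant 2R$, and since $\espace{H}^1(\espace{R})$ is an algebra in dimension one, $\norm{F(\vecX{}{R})}{\espace{H}^1}\leqslant C\norm{\vecX{}{R}}{\espace{H}^1}^3$, so $\norm{G(\vecX{}{R})}{\espace{H}^1}\leqslant C(R)$ almost surely. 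This contribution is therefore of size $C(R)\dt^{2p}=C(R)\dt^p\cdot\dt^p$, which is higher order; collecting the three estimates gives precisely $C_4=C_p(\gamma)C_3^{2p}+C(R)\dt^p$. I expect the only genuine difficulty to be the supremum-in-time control of the stochastic convolution, whose integrand $S(t-u)$ depends on the upper limit $t$, so that it is not a martingale in $t$; this is handled, as already implicitly done in Lemma~\ref{strong_estimate_sol}, by the Burkholder-Davis-Gundy inequality together with the contractivity of $S$, the short length $\dt$ of the interval making the resulting bound of the announced order.
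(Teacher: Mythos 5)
Your proof is correct and yields exactly the announced constant $C_4 = C_p(\gamma)C_3^{2p} + C(R)\dt^p$, but it takes a slightly different route than the paper. The paper's (very terse) proof starts from the Duhamel formulation \eqref{duhamellimitetronquee} driven by the random unitary propagator $U(t,s)$: writing $\vecX{}{R}(t) = U(t,t_{l-1})\vecXexact{l-1}{R} + i\int_{t_{l-1}}^t U(t,s)G\left(\vecX{}{R}(s)\right)ds$, the increment $\widetilde{e}^{\,l-1}_{R}(t)$ splits into the linear local error $\left(U(t,t_{l-1})-\Id\right)\vecXexact{l-1}{R}$, estimated by a black-box application of Lemma \ref{strong_estimate_sol} one Sobolev level higher (with the random initial datum controlled almost surely by $C_3$ from Lemma \ref{boundinH1}), plus the nonlinear integral, bounded by $C(R)\dt$ using the unitarity of $U$ on $\espace{H}^1$ and the cut-off/Lipschitz bound on $G$. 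You instead bypass $U(t,s)$ entirely and use the mild form of the It\^o cut-off equation relative to the heat-type semigroup $S(t)$, which amounts to inlining the proof of Lemma \ref{strong_estimate_sol} rather than citing it: the cost is that your stochastic convolution contains $\partial_x\vecX{}{R}$, the full nonlinear solution, so you need the almost sure $\espace{H}^2$ bound of Lemma \ref{boundinH1} along the whole path and not only at $t_{l-1}$; the gain is that adaptedness of the integrand is immediate (no conditioning on $\mathcal{F}_{t_{l-1}}$ to deal with a random initial datum), and the origin of each piece of $C_4$ is made completely explicit. Both routes rest on the same two ingredients — the order-$\dt^{1/2}$ linear increment at one higher regularity and the almost sure bound $\norm{G\left(\vecX{}{R}\right)}{\espace{H}^1}\leqslant C(R)$ coming from the cut-off — and both implicitly invoke the $\espace{H}^2$ analogue of Lemma \ref{boundinH1} (stated in the paper only for $\espace{H}^6$). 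Finally, your handling of the supremum in time of the stochastic convolution (BDG plus contractivity of $S$, despite the integrand depending on the upper limit $t$) is exactly the same standard, slightly informal argument the paper itself uses in Lemma \ref{strong_estimate_sol}, so no gap is introduced relative to the paper's own level of rigor.
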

\begin{proof}[Proof of Lemma \ref{estimate_sol_tronquee}]
This estimate is obtained using the Duhamel formulation
\eqref{duhamellimitetronquee}, writing $\vecX{}{R}(t)$ in terms of
$\vecXexact{l-1}{R}$, using Lemma  \ref{strong_estimate_sol} and \ref{boundinH1}
and because $G$ is globally Lipschitz.
\end{proof}
Let us now prove  Proposition \ref{lipschitzrate}.
\begin{proof}[Proof of Proposition \ref{lipschitzrate}]
We split the difference as follows
 \begin{align*}
 U\left(t_{n},s \right)G\left( \vecX{}{R}(s)\right) - \mathcal{U}_{\dt}^{n,l} 
G\left(\vecX{l-1}{R},\vecX{l}{R}\right)  =  A_1^{l-1,l} + A_2^{l-1,l}  +
A_3^{l-1,l}, 
 \end{align*}
 where
\bearray
& A_1^{l-1,l}=\displaystyle{ U(t_n,s)\left(
\Theta_{R}^2\left(\vecX{}{R}(s)\right)- \Theta_{\vecX{}{R}}^{l-1,l}
\right)F\left( \vecX{}{R}(s)\right)  }\\[0.5cm]
& A_2^{l-1,l}=\displaystyle{ \left( U\left(t_{n},s \right) - 
\mathcal{U}_{\dt}^{n,l}  \right) \Theta_{\vecX{}{R}}^{l-1,l} F\left(
\vecX{}{R}(s)\right)  }\\[0.5cm]
& A_3^{l-1,l}=\displaystyle{ 
 \mathcal{U}_{\dt}^{n,l}\Theta_{\vecX{}{R}}^{l-1,l}  \left(F\left(
\vecX{}{R}(s)\right) -  F\left(\vecX{l-1}{R},\vecX{l}{R}\right) \right) }.
\eearray
In order to obtain an estimate on the global error in $L^{2p}\left(\Omega\right)
$, we decompose the term $\vecX{}{R}(s) - \vecX{l}{R}$, appearing in
$A_1^{l-1,l}$ and $A_3^{l-1,l}$,  in two terms : $\widetilde{e}^{ \, l}_{R}(s)$ 
and $e_R^l$. The first term gives the contribution to the final order and the
second term may be handled by a fixed point procedure. Let us denote
$\Theta_{\vecX{}{R}}^{l} = \Theta\left( \norm{\vecX{l}{R}}{\espace{H}^1}/R
\right) $ for any $l=0, \cdots n$. Writing
\begin{align*}
\Theta_{R}^2\left(\vecX{}{R}(s)\right) - \Theta_{\vecX{}{R}}^{l-1,l} = \ &
\Theta_{R}\left(\vecX{}{R}(s)\right) \left( \Theta_{R}\left(\vecX{}{R}(s)\right)
 - \Theta_{\vecX{}{R}}^{l-1}\right)\\ & - \Theta_{\vecX{}{R}}^{l-1}\left(
\Theta_{R}\left(\vecX{}{R}(s)\right)  - \Theta_{\vecX{}{R}}^{l}\right) 
\end{align*}
and using the isometric property of the random propagator $U$, the boundedness of
$\Theta$ and $\Theta'$ and the mean value theorem we obtain the following bound
\a
 \lefteqn{\norm{\sum_{l=1}^n
\int_{t_{l-1}}^{t_l}\Theta_{R}\left(\vecX{}{R}(s)\right) \left(
\Theta_{R}\left(\vecX{}{R}(s)\right)-
\Theta_{\vecX{}{R}}^{l-1}\right)U(t_n,s)F\left(\vecX{}{R}(s)\right)ds
}{\espace{H}^1}^{2p} } \\
 &&\leqslant \left( \sum_{l=1}^n CR^3 \int_{t_{l-1}}^{t_l}
\frac{\norm{\Theta'}{L^{\infty}}}{R} \left( \norm{\widetilde{e}^{ \,
l-1}_{R}(s)}{\espace{H}^1} + \norm{e^{ \, l-1}_{R}}{\espace{H}^1}\right) ds
\right)^{2p}.
\b
By the same arguments, together with Lemma \ref{boundinH1} and
\ref{estimate_sol_tronquee}, we obtain  
 \a
 \espace{E}\left( \max_{n \in \llbracket 1 , N\rrbracket}\norm{\sum_{l=1}^n
A_1^{l-1,l}}{\espace{H}^1}^{2p} \right) \leqslant  C_5(R,T,p)\left[ C_4(R,T,p,
\gamma, \norm{\vecX{ }{0}}{\espace{H}^{2}}) \dt^p +  \espace{E}\left(  \max_{n
\in \llbracket 1 , N\rrbracket}\norm{e^{n}_{R}}{\espace{H}^1}^{2p}
\right)\right].
\b
where $C_5 \equiv C^{2p}R^{4p}T^{2p}.$
Now, we split the term $A_2^{l-1,l}$ further
\a
A_2^{l-1,l}
&=&  \int_{t_{l-1}}^{t_l}U\left(t_{n},s \right)\left(\Id - 
U\left(s,t_{l-1}\right)  \right) \Theta_{\vecX{}{R}}^{l-1,l} F\left(
\vecX{}{R}(s)\right)ds\\
&&+\int_{t_{l-1}}^{t_l}\left( U\left(t_{n},t_{l-1} \right) -
\mathcal{U}_{\dt}^{n,l}\right) \Theta_{\vecX{}{R}}^{l-1,l}F\left(
\vecX{}{R}(s)\right)ds\\
&=& A_{2,1}^{l-1,l}+A_{2,2}^{l-1,l}.
\b
The first term in the above equality can easily be estimated using again the
isometric property of the random propagator $U\left(t_{n},s \right)$ and Hölder
inequality, together with Lemma \ref{boundinH1} and \ref{strong_estimate_sol}, 
\begin{align*}
\espace{E} \left(\max_{n \in  \llbracket 1 , N\rrbracket}\norm{\sum_{l=1}^n
A_{2,1}^{l-1,l}}{\espace{H}^1}^{2p} \right) \leqslant C_6(R,T,p,\gamma ,
\norm{X_0}{\espace{H}^2})  \dt^p.
\end{align*}
On the contrary, the second term $A_{2,2}^{l-1,l}$ cannot be bounded directly
because we do not have an explicit representation (in Fourier space) of the
random propagator $U(t,s)$, $t,s \in \espace{R}_+,  t\geqslant s$, solution of
the linear equation \eqref{linearmanakovlimite}. Writing  
\[
\mathcal{U}_{\dt}^{n,l} = U_{\dt,n}\cdots U_{\dt,l-1}\left(\Id
-\frac{1}{2}H_{\dt,l-1} \right)^{-1},
\]
we split $A_{2,2}^{l-1,l}$ as follows
\a
A_{2,2}^{l-1,l} &=&  \int_{t_{l-1}}^{t_l}\left( U\left(t_{n},t_{l-1} \right) -
\mathcal{U}_{\dt}^{n,l}\right) \Theta_{\vecX{}{R}}^{l-1,l} \left( F\left(
\vecX{}{R}(s)\right) -   F\left( \vecXexact{l-1}{R} \right)\right)ds \\
&&+ \int_{t_{l-1}}^{t_l} \Theta_{\vecX{}{R}}^{l-1,l}\left( U\left(t_n, t_{l-1}
\right) - U_{\dt,n}\cdots U_{\dt,l-1} \right)F\left( \vecXexact{l-1}{R} \right)
ds\\
&&+  \int_{t_{l-1}}^{t_l} \Theta_{\vecX{}{R}}^{l-1,l}U_{\dt,n}\cdots
U_{\dt,l-1}\left(\Id -  \left(\Id -\frac{1}{2}H_{\dt,l-1} \right)^{-1} 
\right)F\left( \vecXexact{l-1}{R}\right) ds\\
&=& A_{2,2,1}^{l-1,l}+A_{2,2,2}^{l-1,l}+A_{2,2,3}^{l-1,l}.
\b
The first term $A_{2,2,1}^{l-1,l}$ is easily bounded thanks to the local
Lipschitz property of the nonlinear function $F$, the isometric property of both
$U\left(t_{n},t_{l-1} \right)$ and $\mathcal{U}_{\dt}^{n,l}$, the boundedness of
$\Theta$ and Lemma \ref{boundinH1}. This leads to 
\begin{align}\label{estimateA221}
\espace{E}\left(\max_{n \in  \llbracket 1 , N\rrbracket}\norm{\sum_{l=1}^n
A_{2,2,1}^{l-1,l}}{\espace{H}^1}^{2p} \right) \leqslant C_7(R,T,p, \gamma,
\norm{X_0}{\espace{H}^2})   \dt^p.
\end{align}
Let us now consider the second term $A_{2,2,2}^{l-1,l}$ that can be bounded
using the linear estimate \eqref{strongorder} obtained in Proposition
\ref{linearstabilite} together with Lemma \ref{boundinH1}. In this way,
\begin{equation}\label{estimateA222}
\espace{E} \left(\max_{n \in  \llbracket 1 , N\rrbracket}\norm{\sum_{l=1}^n
A_{2,2,2}^{l-1,l}}{\espace{H}^1}^{2p} \right) \leqslant C_8(R,T,p, \gamma,
\norm{X_0}{\espace{H}^6}) \dt^p.
\end{equation} 
An estimate on the last term $A_{2,2,3}^{l-1,l}$ is obtained thanks to the next
result, whose proof is identical to Lemma \ref{localerror}.
\begin{lemma}\label{localerrorNL}
For any $n \in \espace{N}$, there exists a positive random constant $C_{n}(
\omega) < +\infty$ a.s. belonging to $L^{2p}\left(\Omega\right), p \geqslant 1
$, whose moments are independent of $n$, such that for any $f \in
\espace{H}^{1}$
\[
\norm{\left[ \Id -  \left(\Id -\frac{1}{2}H_{\dt,n} \right)^{-1}  \right] f }{
\espace{L}^2} \leqslant C_{n}( \omega)\sqrt{\dt} \norm{f}{\espace{H}^1}  \qquad
a.s.
\]
Moreover for any $ p \geqslant 1$, there exists a constant $C(p)$ independent of $n$ such that
\[
 \espace{E}\left(\max_{n \in \llbracket 1 , N\rrbracket}C_{n}^{2p} \right)
< C(p).
\]
\end{lemma}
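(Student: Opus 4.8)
The plan is to follow the route of Lemma \ref{localerror}: reduce the operator bound to a pointwise estimate on the matrix Fourier symbol, divided by $(1+\abs{\xi}^2)^{1/2}$, and then transfer it back by Plancherel. Writing $M(\xi) = I_2 - \tfrac12\widehat{H}_{\dt,n}(\xi)$ for the symbol of $\Id - \tfrac12 H_{\dt,n}$, where $\widehat{H}_{\dt,n}(\xi)$ is the matrix appearing in \eqref{symbol}, the operator $\Id - (\Id - \tfrac12 H_{\dt,n})^{-1}$ is the Fourier multiplier with symbol $I_2 - M(\xi)^{-1}$. Since $M(\xi) - I_2 = -\tfrac12\widehat{H}_{\dt,n}(\xi)$, one has the algebraic identity
\[
 I_2 - M(\xi)^{-1} = M(\xi)^{-1}\left( M(\xi) - I_2\right) = -\tfrac12\, M(\xi)^{-1}\widehat{H}_{\dt,n}(\xi),
\]
so that $\normm{I_2 - M(\xi)^{-1}}{\infty} \leqslant \tfrac12 \normm{M(\xi)^{-1}}{\infty}\normm{\widehat{H}_{\dt,n}(\xi)}{\infty}$, the first factor being handled through $M(\xi)^{-1} = (\det M(\xi))^{-1}\,\mathrm{adj}\,M(\xi)$.

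The crucial observation, which recycles the work already done, is that the determinant of $M(\xi)$ coincides with the quantity $\det(n,\xi)$ computed in the proof of Lemma \ref{continueL2}; indeed a direct expansion gives
\[
 \det M(\xi) = 1 + \frac{\gamma\dt}{4}\sum_{k=1}^3\left(\chi_k^n\right)^2\abs{\xi}^2 - \frac{\dt^2}{4}\abs{\xi}^4 - i\dt\abs{\xi}^2,
\]
whose modulus squared is exactly the map $f(x,y)$ with $x = \dt^{1/2}\abs{\xi}$ and $y = \sum_k(\chi_k^n)^2$, for which the three‑regime lower bound of that proof is available. The entries of $\mathrm{adj}\,M(\xi)$ are bounded by $C(1 + \dt\abs{\xi}^2 + \sqrt{\gamma\dt}\,\abs{\xi}\sum_k\abs{\chi_k^n})$ and those of $\widehat{H}_{\dt,n}(\xi)$ by $C(\dt\abs{\xi}^2 + \sqrt{\gamma\dt}\,\abs{\xi}\sum_k\abs{\chi_k^n})$, i.e. after the substitution by $C(1+x^2+\sqrt{\gamma}\,x\sum_k\abs{\chi_k^n})$ and $C(x^2+\sqrt{\gamma}\,x\sum_k\abs{\chi_k^n})$ respectively. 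I would then verify, separately on the three ranges of $x$ delimited by $4\max(\gamma y/4,1)$ and $16\max(\gamma y/4,1)$, that
\[
 \frac{1}{(1+\abs{\xi}^2)^{1/2}}\,\frac{\left(1 + x^2 + \sqrt{\gamma}\,x\sum_k\abs{\chi_k^n}\right)\left(x^2 + \sqrt{\gamma}\,x\sum_k\abs{\chi_k^n}\right)}{\abs{\det M(\xi)}} \leqslant C_{n}(\omega)\sqrt{\dt},
\]
with $C_{n}(\omega)$ a polynomial in $\sum_k(\chi_k^n)^2$ and the $\abs{\chi_k^n}$. The mechanism is the same as for $f(x,y)$: on the first two ranges $x$ is bounded by a multiple of $(1+\gamma y)^{1/2}$, so each surviving power of $x$ is traded against one factor $x = \dt^{1/2}\abs{\xi}\leqslant \dt^{1/2}(1+\abs{\xi}^2)^{1/2}$ at the cost of a polynomial in $y$; on the last range $\abs{\det M(\xi)}\gtrsim x^4$ forces the ratio to be $O(1)$, which is again $\leqslant \dt^{1/2}(1+\abs{\xi}^2)^{1/2}$ since there $x>4$.

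Granting this pointwise symbol bound, Plancherel's theorem yields
\[
 \norm{\left[\Id - (\Id - \tfrac12 H_{\dt,n})^{-1}\right]f}{\espace{L}^2} \leqslant \sup_{\xi\in\espace{R}}\frac{\normm{I_2 - M(\xi)^{-1}}{\infty}}{(1+\abs{\xi}^2)^{1/2}}\,\norm{f}{\espace{H}^1} \leqslant C_{n}(\omega)\sqrt{\dt}\,\norm{f}{\espace{H}^1},
\]
which is the first assertion. For the moment bound, the increments $\chi_k^n$ are i.i.d.\ standard Gaussians, so $C_{n}(\omega)$ is a polynomial in them and has finite moments of every order; the estimate $\espace{E}(\max_{1\leqslant n\leqslant N}C_{n}^{2p})<C(p)$ then follows from the Cauchy--Schwarz and Burkholder--Davis--Gundy inequalities exactly as in Lemmas \ref{continueL2} and \ref{localerror}. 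The main obstacle is the regime analysis of the second paragraph: one must check that the $\sqrt{\dt}$ gain is genuinely uniform in $\xi$ and in $\dt$, which hinges on the fact that the two ranges where $x$ is not large are precisely those where $x$ is controlled by $y$, so that no uncompensated power of $\abs{\xi}$ survives after division by $(1+\abs{\xi}^2)^{1/2}$.
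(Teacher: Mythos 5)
Your proposal is correct and follows essentially the same route as the paper: the paper proves Lemma \ref{localerrorNL} by declaring its proof identical to that of Lemma \ref{localerror}, which is exactly the Fourier-symbol argument you spell out — bounding $\sup_{\xi}(1+\abs{\xi}^2)^{-1/2}\normm{I_2-M(\xi)^{-1}}{\infty}$ by $C_n(\omega)\sqrt{\dt}$ with $C_n$ a polynomial in the Gaussian increments, reusing the determinant lower bound $f(x,y)$ from Lemma \ref{continueL2}, transferring to the operator bound by Plancherel, and controlling the moments of the maximum via Cauchy--Schwarz and Burkholder--Davis--Gundy. Your three-regime verification merely fills in details that the paper leaves implicit.
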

From this Lemma, we easily obtain a bound on the last term $A_{2,2,3}$.
Combining the above estimates \eqref{estimateA221}, \eqref{estimateA222}, we
obtain an estimate on  $A^{l-1,l}_{2,2}$
\begin{align*}
 \espace{E}\left( \max_{n \in \llbracket 1 , N\rrbracket}\norm{\sum_{l=1}^n
A^{l-1,l}_{2,2}}{\espace{H}^1}^{2p} \right)  \leqslant C_9(R,T,p, \gamma,
\norm{X_0}{\espace{H}^6})  \dt^p.
\end{align*}
Finally, we bound the last term $A_3^{l-1,l}$ splitting it as follows
\a
A_3^{l-1,l} &=& \int_{t_{l-1}}^{t_l}
\mathcal{U}_{\dt}^{n,l}\Theta_{\vecX{}{R}}^{l-1,l}  \left(F\left(
\vecX{}{R}(s)\right) -  F\left(\vecXexact{l-1}{R},\vecXexact{l-1}{R}\right)
\right)ds  \\&&+
\int_{t_{l-1}}^{t_l} \mathcal{U}_{\dt}^{n,l}\Theta_{\vecX{}{R}}^{l-1,l}  \left(
F\left(\vecXexact{l-1}{R},\vecXexact{l-1}{R}\right)- 
F\left(\vecXexact{l-1}{R},\vecXexact{l}{R}\right) \right)ds\\&&+
\int_{t_{l-1}}^{t_l} \mathcal{U}_{\dt}^{n,l}\Theta_{\vecX{}{R}}^{l-1,l} 
\left(F\left(\vecXexact{l-1}{R},\vecXexact{l}{R}\right)- 
F\left(\vecX{l-1}{R},\vecX{l}{R}\right) \right)ds \\
&=& A_{3,1}^{l-1,l}+A_{3,2}^{l-1,l}+A_{3,3}^{l-1,l}.
\b
Note that by Lemma \ref{Lipschitz}, the last term $A_{3,3}^{l-1,l}$ is easily
bounded as follows
\[
  \espace{E}\left( \max_{n \in \llbracket 1 , N\rrbracket}\norm{\sum_{l=1}^n
A^{l-1,l}_{3,3}}{\espace{H}^1}^{2p} \right)  \leqslant
C_{10}(R,T,p)\espace{E}\left(  \max_{n \in \llbracket 1 ,
N\rrbracket}\norm{e^{n}_{R}}{\espace{H}^1}^{2p} \right),
\]
where $C_{10} \equiv C^{2p} R^{4p} T^{2p}$. The first term $A_{3,1}^{l-1,l}$ is
bounded using
$F\left(\vecXexact{l-1}{R},\vecXexact{l-1}{R}\right)=F\left(\vecXexact{l-1}{R}
\right)$, Lemma \ref{boundinH1}, Hölder inequality and Lemma
\ref{estimate_sol_tronquee}. An estimate on the second term $A_{3,2}^{l-1,l}$
may be obtained using Lemma  \ref{boundinH1} and Lemma
\ref{estimate_sol_tronquee}.
\end{proof}

\section{Conclusion}
The evolution of the slowly varying envelopes 
driven by random polarization mode dispersion is described by the stochastic 
Manakov equation. We introduce three different schemes for this equation using a semi-implicit 
discretization of the Stratonovich integrals. We prove
that the CN scheme is of order $1/2$ and is conservative for
the discrete
$\espace{L}^2$ norm, contrarily to a scheme based on the Itô formulation. 
This method may be applied 
to other stochastic equations written in Stratonovich form and especially for equations
with conservation laws.

\def\polhk#1{\setbox0=\hbox{#1}{\ooalign{\hidewidth
  \lower1.5ex\hbox{`}\hidewidth\crcr\unhbox0}}} \def\cprime{$'$}

%\bibliographystyle{plain}
%\bibliography{Refs.bib}
%\addcontentsline{toc}{chapter}{Bibliographie}
\end{document}